\numberwithin{equation}{section}
\newcommand{\rG}{{\rm G}}
\newcommand{\Ad}{\mathrm{Ad}}
\renewcommand{\det}{\mathop\mathrm{det}\nolimits}
\renewcommand{\epsilon}{\varepsilon}
\newcommand{\Hol}{\mathrm{Hol}}
\newcommand{\Ric}{{\rm Ric}}
\newcommand{\diag}{\mathrm{diag}}
\newcommand{\dvol}{\mathop\mathrm{dvol}\nolimits}
\newcommand{\id}{\mathrm{id}}
\renewcommand{\Im}{\mathop{\mathrm{Im}}}
\newcommand{\tr}{\mathop{\mathrm{tr}}\nolimits}
\newcommand{\sign}{\mathrm{sign}}
\def\<{\mathopen{}\left<}
\def\>{\right>\mathclose{}}
\def\({\mathopen{}\left(}
\def\){\right)\mathclose{}}
\newtheorem{theorem}{Theorem}
\newtheorem{corollary}{Corollary}
\newtheorem{definition}{Definition}
\newtheorem{example}{Example}
\newtheorem{lemma}{Lemma}
\newtheorem{proposition}{Proposition}
\newtheorem{remark}{Remark}
\numberwithin{equation}{section}
\newcommand{\Comment}[2][\empty]{\ifthenelse{\equal{#1}{\empty}}{\todo[color=gray!10]{#2}}{\todo[color=gray!10,#1]{#2}}}
\title{Gauge theory on Aloff-Wallach spaces}
\author{ Gavin Ball\footnote{ball@math.duke.edu} \ \ \ \ \ \   Goncalo Oliveira\footnote{oliveira@math.duke.edu} \\ 
Duke University}
\date{October 2016}
\begin{document}
\maketitle

\begin{abstract}
For gauge groups $U(1)$ and $SO(3)$ we classify invariant $G_2$-instantons for homogeneous coclosed $G_2$-structures on Aloff-Wallach spaces $X_{k,l}$. As a consequence, we give examples where $G_2$-instantons can be used to distinguish between different strictly nearly parallel $G_2$-structures on the same Aloff-Wallach space. In addition to this, we find that while certain $G_2$-instantons exist for the strictly nearly parallel $G_2$-structure on $X_{1,1}$, no such $G_2$-instantons exist for the tri-Sasakian one. As a further consequence of the classification, we produce examples of some other interesting phenomena, such as: irreducible $G_2$-instantons that, as the structure varies, merge into the same reducible and obstructed one; and $G_2$-instantons on nearly parallel $G_2$-manifolds that are not locally energy minimizing.
\end{abstract}

\setcounter{tocdepth}{2}
\tableofcontents


\section{Introduction}

A $3$-form $\varphi$ on an oriented 7-dimensional manifold $X^7$ is called a $G_2$-structure, if it takes values in a certain open subbundle $\Lambda^3_+ \subset \Lambda^3$. Such $3$-forms $\varphi$ determine (in a nonlinear way) a Riemannian metric $g_{\varphi}$. In the case when the holonomy of $g_{\varphi}$ lies inside the exceptional Lie group $G_2$, the pair $(X^7 , \varphi)$ is called a $G_2$-manifold, or equivalently $\varphi$ is said to be torsion free. A $G_2$-instanton is a solution to a gauge theoretical equation that can be written in an oriented $7$-dimensional manifold $X^7$ equipped with a $G_2$-structure $\varphi$. Even though $G_2$-instantons have entered the mathematical literature for over 30 years now \cite{Corrigan1983}, it was only in the past few years that the first nontrivial examples appeared, namely in \cite{Walpuski2011}, \cite{SaEarp2015}, \cite{Walpuski2015}, \cite{Clarke14}, \cite{Oliveira2014} and \cite{Lotay2016}. This recent interest in $G_2$-instantons is mostly due to the suggestion by Donaldson-Thomas \cite{Donaldson1998} and Donaldson-Segal \cite{Donaldson2009} that it may be possible to use $G_2$-instantons to construct an enumerative invariant of $G_2$-manifolds. However, adding to the scarcity of examples there are substantial difficulties in constructing such an invariant. In fact, it is conceivable that in order to overcome some of these difficulties one may need to consider $G_2$-structures that are not torsion free. Indeed, there is a larger class of $G_2$-structures, other than just the torsion free class, with respect to which the $G_2$-instanton equation still lies in an elliptic complex. All of this leads us to investigate $G_2$-instantons for these more general $G_2$-structures. For example, one may ask to what extent $G_2$-instantons are persistent under deformations of the $G_2$-structure. In this paper we classify homogeneous (invariant) $G_2$-instantons on an infinite family of $7$-manifolds admitting many such $G_2$-structures. As a consequence we find many examples of new phenomena and are able to investigate what happens to the $G_2$-instantons when the $G_2$-structure varies.

\subsection{Preliminaries}

Let $(X^7, \varphi)$ be a compact, oriented, $7$-manifold equipped with a $G_2$-structure $\varphi$. Let $g_{\varphi}$ be the induced Riemannian metric, $\ast_{\varphi}$ the associated Hodge star, and $\psi$ the $4$-form $\ast_{\varphi} \varphi$. If $G$ is a compact, semisimple Lie group and $P \rightarrow X$ is a principal $G$ bundle, a connection $A$ on $P$ is called a $G_2$-instanton if
\begin{equation}\label{eq:G2_Instanton}
F_A \wedge \psi = 0,
\end{equation}
where $F_A$ denotes the curvature of $A$. When the $G_2$-structure is coclosed, i.e. $d \psi = 0$, the $G_2$-instanton equation lies in an elliptic complex and we shall restrict to this case. The torsion free $G_2$-structures correspond to the special case when $\varphi$ is harmonic. One other special class of coclosed $G_2$-structures are the so called nearly parallel ones, for which $d \varphi=\lambda \psi$ for some $\lambda \neq 0$. If $\varphi$ is nearly parallel, then $g_{\varphi}$ is Einstein with positive scalar curvature. Another perspective on nearly parallel $G_2$-structures is that they are exactly those $G_2$-structures for which the metric cone $(\mathbb{R}^{+} \times X^7, g_C = dr^2 + r^2 g_{\varphi})$ has holonomy contained in $Spin(7).$\\
One other interesting class of connections on a principal bundle over an oriented Riemannian manifold are the Yang-Mills connections. These are defined as the critical points of the Yang-Mills energy
$$E(A)= \frac{1}{2} \int_X \vert F_A \vert^2 ,$$
where we use an $\Ad$-invariant inner product to compute the norm $\vert F_A \vert$. If the $G_2$-structure is either torsion free or nearly parallel, then $G_2$-instantons are also Yang-Mills connections. Moreover, in the torsion free case a simple computation (equation \ref{eq:action}) shows that any $G_2$-instanton actually minimizes the Yang-Mills energy.

\subsection{Summary of the main results}

The Aloff-Wallach space $X_{k,l}$ is defined as the quotient of $SU(3)$ by a $U(1)$ subgroup, whose embedding in $SU(3)$ is determined by two integers $k,l$. On each $X_{k,l}$ we consider a real 4-dimensional family $\mathcal{C}$ of $G_2$-structures, which contains exactly two nearly parallel $G_2$-structures. As proved in \cite{Cabrera1996}, for most $k,l$\footnote{$k \neq \pm l$, $k \neq 0$, $l \neq 0$, $k \neq 2l$, $l \neq -2k$} this family completely exhausts all homogeneous, coclosed $G_2$-structures. In fact, for $k \neq  l$, $k \neq 2l$, $l \neq -2k$, the two nearly parallel $G_2$-structures are in fact strict, meaning that the holonomy of the cone metric $g_C = dr^2 + r^2 g_{\varphi}$ on $\mathbb{R}^{+} \times X_{k,l}$ is exactly $Spin(7)$. These and other facts regarding the geometry of Aloff-Wallach spaces are recalled, with more detail, in section \ref{sec:Aloff-Wallach}. In \ref{ss:InvariantConnections}, we classify invariant connections on each $X_{k,l}$. These results are then used in section \ref{sec:AWG2Instantons} to investigate $G_2$-instantons on the Aloff-Wallach spaces $X_{k,l}$, for $k \neq  l$, $k \neq 2l$, $l \neq -2k$. The remaining case are analysed seperately in section \ref{sec:X11}. We now summarize the main results of those sections starting with the more general situation. In \ref{ss:Abelian_X_(k,l)} we classify invariant Abelian $G_2$-instantons with respect to all $\varphi \in \mathcal{C}$, see theorem \ref{thm:AbelianInstantons}. Here we only state a corollary, which is proved in the third item of remark \ref{rem:Abelian_G2_Instantons_(k,l)}

\begin{theorem}\label{thm:Intro_Reducible_(k,l)}
Let $k \neq  l$, $k \neq 2l$, $l \neq -2k$. For the generic $\varphi \in \mathcal{C}$ there is a unique invariant $G_2$-instanton on any homogeneous complex line bundle over $X_{k,l}$. However, for any such $k,l$, there do exist $\varphi \in \mathcal{C}$ so that any such bundle has a 1-parameter family of invariant $G_2$-instantons.
\end{theorem}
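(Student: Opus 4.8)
This is the abelian case of the classification Theorem~\ref{thm:AbelianInstantons}, and I will sketch a self-contained argument. Write $X_{k,l}=SU(3)/H$ with $H=U(1)_{k,l}$ and fix the reductive splitting $\su(3)=\fh\oplus\fm$ recalled in Section~\ref{sec:Aloff-Wallach}. Homogeneous complex line bundles over $X_{k,l}$ are classified by the characters $\chi_n\colon H\to U(1)$, $n\in\Z$; write $L_n$ for the associated bundle. By Wang's theorem an invariant connection on the unitary frame bundle of $L_n$ is the same datum as an $\Ad(H)$-equivariant linear map $\Lambda\colon\su(3)\to i\R$ restricting to $d\chi_n$ on $\fh$. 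The admissibility hypotheses ($k\neq\pm l$, $k,l\neq0$, $k\neq2l$, $l\neq-2k$) ensure that $\Ad(H)$ acts on $\fm$ with a single one-dimensional trivial summand $\R e_0$ and three pairwise inequivalent nontrivial two-dimensional summands $\fm_1,\fm_2,\fm_3$; hence $(\fm^*)^{\Ad(H)}=\R\,e_0^\flat$ is one-dimensional and the invariant connections on $L_n$ form an affine line $\{A_{n,t}\}_{t\in\R}$.

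First I would compute the curvature. Since $\Lambda$ vanishes on each $\fm_i$, the structure equations of $\su(3)$ give
\[
 F_{A_{n,t}}=\sum_{i=1}^{3}\big(n\,a_i+t\,b_i\big)\,\omega_i ,
\]
where $\omega_i\in(\Lambda^2\fm^*)^{\Ad(H)}$ is the invariant $2$-form supported on $\fm_i$ and $a_i,b_i$ are fixed constants, independent of $n$ and $t$, read off from the roots of $\su(3)$ and the embedding of $H$; in particular $F_{A_{n,t}}$ is affine-linear in $t$ and linear in $n$. Next I would reduce the instanton equation. For any $\varphi\in\cC$ the $4$-form $\psi=\ast_\varphi\varphi$ is invariant, so $F_{A_{n,t}}\wedge\psi$ is an invariant $6$-form; since $(\Lambda^6\fm^*)^{\Ad(H)}$ is one-dimensional (spanned by $\omega_1\wedge\omega_2\wedge\omega_3$) one may write $\omega_i\wedge\psi=\lambda_i(\varphi)\,\omega_1\wedge\omega_2\wedge\omega_3$, with each $\lambda_i$ depending real-analytically on $\varphi$, and the $G_2$-instanton equation $F_{A_{n,t}}\wedge\psi=0$ collapses to the single scalar identity
\[
 n\,P(\varphi)+t\,Q(\varphi)=0,\qquad P:=\textstyle\sum_i a_i\lambda_i,\quad Q:=\textstyle\sum_i b_i\lambda_i .
\]
Crucially, the coefficient $Q$ of $t$ is bundle-independent.

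This yields a trichotomy over $\cC$: where $Q\neq0$ there is, for each $n$, the unique solution $t=-nP(\varphi)/Q(\varphi)$, hence a unique invariant $G_2$-instanton on every $L_n$; where $Q=0\neq P$ there is none for $n\neq0$; and where $P=Q=0$ every $A_{n,t}$ solves the equation, giving a $1$-parameter family on each $L_n$ simultaneously. The first assertion then follows once one checks that $Q$ is not identically zero on the connected $4$-manifold $\cC$, which I would do by evaluating $Q$ at one convenient explicit member of $\cC$ (e.g.\ at one of the two nearly parallel structures); then $\{Q\neq0\}$ is open and dense, and there the instanton is unique.

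For the second assertion I must, for each admissible $(k,l)$, exhibit $\varphi_\star\in\cC$ with $P(\varphi_\star)=Q(\varphi_\star)=0$. Using the explicit description of $\cC$ from Section~\ref{sec:Aloff-Wallach} — that is, the explicit dependence of $(\lambda_1,\lambda_2,\lambda_3)$ on the moduli of $\cC$ — this amounts to solving the two equations $\sum a_i\lambda_i=0=\sum b_i\lambda_i$, linear in the $\lambda_i$, within the image of $\cC$; since I do not expect $(a_i)$ and $(b_i)$ to be proportional, their common zero is a codimension-two locus and the point is to verify that it meets $\cC$ (and that no extra condition is concealed there). Pinning down $P$ and $Q$ explicitly enough to locate such a common zero inside the four-parameter family $\cC$ is the main obstacle; the curvature computation and the identity $\omega_i\wedge\psi=\lambda_i(\varphi)\,\omega_1\wedge\omega_2\wedge\omega_3$ are routine, but demand care with normalizations.
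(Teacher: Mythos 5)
Your reduction of the problem is essentially the paper's own: invariant connections on the line bundle associated with $\lambda_n$ form an affine line (the canonical connection plus $b\,\omega_4$), the curvature is linear in $(n,b)$ and supported on the three invariant $2$-forms, and since the space of invariant $6$-forms is one-dimensional the instanton equation collapses to a single scalar equation linear in $n$ and $b$. In the paper's notation this equation is $-n\Gamma-6\sqrt{2}s\Delta\, b=0$ with $\Gamma=A^2B^2(m-k)+A^2C^2(l-m)+B^2C^2(k-l)$ and $\Delta=A^2B^2l+A^2C^2k+B^2C^2m$, so your $P$ and $Q$ are (up to constants) $\Gamma$ and $\Delta$, and your trichotomy is exactly Theorem~\ref{thm:AbelianInstantons}. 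For the first assertion your plan is fine, and in fact easier than evaluating at a nearly parallel structure: $\Delta$ is a polynomial in $A^2,B^2,C^2$ whose coefficients are $l,k,m$, not all zero, so $\{\Delta\neq 0\}$ is open and dense in every component of $\mathcal{C}$, and there the solution $b=-n\Gamma/(6\sqrt{2}s\Delta)$ is unique.

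The genuine gap is in the second assertion, which you yourself flag as ``the main obstacle'': you never exhibit a $\varphi_\star\in\mathcal{C}$ with $P(\varphi_\star)=Q(\varphi_\star)=0$, and the codimension-two heuristic does not do the job --- two real polynomial equations need not have a common zero inside the constrained open region $(A,B,C,D)\in(\mathbb{R}^+)^2\times(\mathbb{R}\setminus\{0\})^2$, and ``I do not expect $(a_i)$ and $(b_i)$ to be proportional'' is not an argument that the common zero locus meets $\mathcal{C}$ for \emph{every} admissible $(k,l)$, which is what the theorem claims. The missing step is, however, one line (and is how the paper closes it, in the third item of Remark~\ref{rem:Abelian_G2_Instantons_(k,l)}): take any $\varphi\in\mathcal{C}$ with $A^2=B^2=C^2$ and $D$ arbitrary. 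Then $\Gamma=A^4\left((m-k)+(l-m)+(k-l)\right)=0$ and $\Delta=A^4(k+l+m)=0$, the latter because $k+l+m=0$ by definition of $U(1)_{k,l}$; so on this $2$-parameter subfamily of $\mathcal{C}$ the scalar equation is vacuous and every bundle $Q_n$ carries the $1$-parameter family $-\tfrac{n}{2\sqrt{6}s}h+b\,\omega_4$, $b\in\mathbb{R}$, of invariant $G_2$-instantons. With that substitution your argument becomes a complete proof along the same lines as the paper's.
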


Then, in section \ref{ss:Non_Abelian_X_(k,l)} we focus on invariant $G_2$-instantons with gauge group $SO(3)$. Any homogeneous $SO(3)$-bundle on $X_{k,l}$ can be constructed as $P_{\lambda_n} = SU(3) \times_{U(1)_{k,l}, \lambda_n} SO(3)$, where $\lambda_n : U(1)_{k,l} \rightarrow SO(3)$ is a group homomorphism and the integer $n \in \mathbb{Z}$ denotes the degree of the induced map between maximal tori. We construct explicit maps $\sigma_i : \mathcal{C} \rightarrow \mathbb{R}$, for $i=1,2,3$ whose significance is given in theorem \ref{thm:IrreducibleInstantons}. Below we give a summarized version of that result, when combined with theorem \ref{thm:G2_Instanton_Obstructed}.

\begin{theorem}\label{thm:Intro_Ireducible_(k,l)}
Let $k \neq  l$, $k \neq 2l$, $l \neq -2k$ and $\varphi$ be a homogeneous coclosed $G_2$-structure on $X_{k,l}$. Then, invariant and irreducible $G_2$-instantons on $P_{\lambda_n}$ with respect to $\varphi$ exist if and only if one of the following holds
\begin{enumerate}
\item $n=k-l$ and $\sigma_1(\varphi)>0$,
\item $n=2l+k$ and $\sigma_2(\varphi)>0$, 
\item $n=-l-2k$ and $\sigma_3(\varphi)>0$.
\end{enumerate} 
Moreover, if $\lbrace \varphi(s) \rbrace_{s \in \mathbb{R}} \subset \mathcal{C}$ is a continuous family of $G_2$-structures with $\lbrace  \sigma_1(\varphi(s)) \rbrace_{s \in \mathbb{R}}$ crossing zero once from above, then as $\sigma_1(\varphi(s)) \searrow 0$, two irreducible $G_2$-instantons on $P_{k-l}$ merge and become the same reducible and obstructed $G_2$-instanton for $\sigma_1(\varphi(s)) \leq 0$. Similar statements hold for $\sigma_2$ and $\sigma_3$.
\end{theorem}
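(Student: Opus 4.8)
The plan is to turn the $G_2$-instanton equation \eqref{eq:G2_Instanton} into a finite polynomial system by invoking the classification of invariant connections from \ref{ss:InvariantConnections}, and then read off both the existence statement and the bifurcation from that system.

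\emph{Step 1: reduction to algebra.} Write $\su(3) = \mathfrak{u}(1)_{k,l}\oplus\fm$ for the reductive splitting. By Wang's theorem an invariant connection on $P_{\lambda_n}$ is the canonical connection twisted by some $\Lambda\in\Hom_{U(1)_{k,l}}(\fm,\so(3))$, where $U(1)_{k,l}$ acts on $\so(3)$ via $\Ad\circ\lambda_n$. Decomposing both sides into $U(1)_{k,l}$-weight spaces, $\fm$ carries the weight $0$ (once, over $\R$) together with $\pm(k-l),\pm(2k+l),\pm(2l+k)$, while $\so(3)$ carries $0$ and $\pm n$. Under the hypotheses $k\neq l$, $k\neq 2l$, $l\neq -2k$ the six nonzero weights of $\fm$ are distinct and nonzero, so by Schur's lemma $\Lambda$ can be nonzero on a weight-$\pm w$ summand only when $|n|=w$. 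Hence for $n\notin\{\pm(k-l),\pm(2k+l),\pm(2l+k)\}$ every invariant connection is supported on the weight-$0$ parts, i.e.\ reduces to a circle, and there is nothing irreducible; and up to the sign convention absorbed into $\lambda_n$, the three admissible values are exactly the ones in the statement. This already gives the ``only if'' half.

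\emph{Step 2: solving the algebraic system.} Fix one admissible $n$, say $n=k-l$; the other two are identical. An invariant connection is then parametrized by $(t,\zeta)\in\R\times\C$, with $t$ the component in the weight-$0$ part and $\zeta$ the component in the weight-$(k-l)$ summand, and a residual gauge rotation (the stabiliser of the canonical connection) lets us take $\zeta=r\ge 0$. Compute $F_A$ from Wang's curvature formula in terms of the $\su(3)$ structure constants and $(t,r)$, and impose $F_A\wedge\psi=0$ for the invariant $4$-form $\psi=\ast_\varphi\varphi$, whose coefficients are affine in the four parameters of $\cC$. Equivariance forces all but finitely many components of $F_A\wedge\psi$ to vanish automatically, leaving two polynomial equations in $(t,r)$ with $\varphi$-dependent coefficients; eliminating $t$ yields a single equation of the form $r\cdot Q(r^2;\varphi)=0$. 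Define $\sigma_1(\varphi)$ so that $Q(\,\cdot\,;\varphi)$ has a positive root in $r^2$ precisely when $\sigma_1(\varphi)>0$ (this is the $\sigma_1$ of theorem \ref{thm:IrreducibleInstantons}). The branch $r=0$ is the reducible instanton already found in \ref{ss:Abelian_X_(k,l)}, and the complementary branch consists of irreducible ones; so irreducible invariant $G_2$-instantons on $P_{k-l}$ exist iff $\sigma_1(\varphi)>0$. Running the same computation on the weight-$(2k+l)$ and weight-$(2l+k)$ summands defines $\sigma_2,\sigma_3$ and completes the equivalence.

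\emph{Step 3: the bifurcation, and the main obstacle.} Along a continuous path $\varphi(s)\subset\cC$ with $\sigma_1(\varphi(s))$ decreasing through $0$, the equation $Q(r^2;\varphi(s))=0$ has for $\sigma_1>0$ two solutions $r_+(s)>r_-(s)\ge 0$ representing two distinct gauge classes of irreducible instantons, and as $\sigma_1(\varphi(s))\searrow 0$ these coalesce with one another and with the $r=0$ branch; thus both irreducible $G_2$-instantons on $P_{k-l}$ merge into the single reducible instanton, which persists for $\sigma_1(\varphi(s))\le 0$. That this limiting reducible instanton is obstructed is precisely the content of theorem \ref{thm:G2_Instanton_Obstructed} (a computation of the cohomology of the deformation complex of \eqref{eq:G2_Instanton}), which also explains why no persistence argument could extend the irreducible family past the fold; the cases of $\sigma_2,\sigma_3$ follow verbatim. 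The delicate part is Step 2: setting up Wang's curvature formula with the correct normalisation of $\fm$, writing $g_\varphi$ and $\psi$ across the whole family $\cC$, and keeping track of enough of the $\su(3)$ structure constants — in particular which pairs of root summands feed the quadratic term $[\Lambda\wedge\Lambda]$ — to pin down $Q(r^2;\varphi)$ and hence the explicit form of the $\sigma_i$; it is a finite but sign-sensitive computation. A secondary point is to check that the residual gauge group has been fully used, so that $r_\pm$ really give two gauge classes, and that the $r\to 0$ limit is correctly identified with the relevant member of the abelian classification.
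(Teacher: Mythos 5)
Your overall route is the same as the paper's (Wang's theorem plus Schur to parametrize invariant connections, then solve $F\wedge\psi=0$ algebraically, then a deformation computation for obstructedness), but Step 1 contains a genuine gap. You claim that under $k\neq l$, $k\neq 2l$, $l\neq -2k$ the six nonzero weights of $\fm$ are pairwise distinct, so that an invariant connection on $P_{\lambda_n}$ can only have a component in one root space. This is false in cases the theorem explicitly covers: the hypotheses allow $k+l=0$, $k=0$ or $l=0$ (up to the Weyl action these are all $X_{1,-1}$, precisely the space featured in example \ref{ex:(k,l)=(1,-1)} and section \ref{sss:X1-1}), and there two of the three weights $k-l$, $k+2l$, $-(2k+l)$ coincide; e.g.\ for $(k,l)=(1,-1)$ one has $l-m=m-k=-1$. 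For such $(k,l)$ and $n$ equal to the repeated weight, Wang's theorem produces invariant connections with components in \emph{two} root spaces simultaneously, together with a residual angle parameter, so your parametrization $(t,\zeta)\in\mathbb{R}\times\mathbb{C}$ in Step 2 misses part of the invariant moduli and neither the classification nor the ``only if'' direction follows from representation theory alone. The paper's proof handles exactly this in cases 4--6 of theorem \ref{thm:IrreducibleInstantons}: one writes the full system with both components $a_2,a_3$ and the angle $\beta$, and the instanton equations themselves (squaring and summing two of them) force $a_2a_3=0$, after which $\beta$ can be gauged away and one is back in the single-summand situation. Without this step your argument does not prove the theorem on the very spaces where it is later applied.

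Two secondary points. First, you never derive the $\sigma_i$: defining $\sigma_1$ implicitly as ``$Q$ has a positive root iff $\sigma_1>0$'' does not establish the statement, which refers to the explicit functions of theorem \ref{thm:IrreducibleInstantons}; in the paper the elimination is simpler than you suggest, since the second equation has the form $a_1\cdot(\text{affine in }b)=0$, so $a_1\neq0$ fixes $b$ and the remaining equation is \emph{linear} in $a_1^2$, giving $a_1^2=\sigma_1/(12B^2C^2s^2)$. Consequently the bifurcation is a pitchfork at $a_1=0$: there is a single positive root in $r^2$ and the two instantons are $a_1=\pm r$, merging with the reducible solution as $\sigma_1\searrow0$, rather than two radii $r_+>r_-\ge0$ of a quadratic coalescing with each other as you describe. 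Second, the ``obstructed'' clause is only quoted from theorem \ref{thm:G2_Instanton_Obstructed}; in the paper this is an additional computation, restricting $a\mapsto\ast(d_Aa\wedge\psi)$ to invariant adjoint-valued $1$-forms and showing the resulting $2\times2$ determinant equals $8BC\sigma_1/3$ at the limiting reducible connection, hence vanishes exactly when $\sigma_1=0$. If you intend your write-up to stand as a proof of the stated theorem, you must supply the degenerate-weight analysis, the explicit computation pinning down the $\sigma_i$, and (or at least sketch) the obstruction computation.
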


To better visualize the content of the last part of this theorem we refer the reader to examples \ref{ex:(k,l)=(1,-1)} and \ref{ex:(k,l)=(1,-5)}, together with their accompanying figures \ref{fig:(k,l)=(1,-1)} and \ref{fig:(k,l)=(1,-5)} respectively. Recall that for $k \neq  l$, $k \neq 2l$, $l \neq -2k$, the Aloff-Wallach space $X_{k,l}$ admits two strictly nearly parallel $G_2$-structures. As an application of theorem \ref{thm:Intro_Ireducible_(k,l)}, in \ref{ss:distinguish} we use $G_2$-instantons to distinguish these for many values of $k,l$. Here we will simply state

\begin{corollary}
The are many examples of $k,l$ as in theorem \ref{thm:Intro_Ireducible_(k,l)}, such that the two inequivalent strictly nearly parallel $G_2$-structures on $X_{k,l}$ always admit invariant and irreducible $G_2$-instantons, but on topologically different $SO(3)$-bundles.
\end{corollary}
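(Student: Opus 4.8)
The plan is to let Theorem~\ref{thm:Intro_Ireducible_(k,l)} do the heavy lifting and reduce the claim to an elementary sign analysis. By that theorem, for a fixed homogeneous coclosed $\varphi$ on $X_{k,l}$ an invariant irreducible $G_2$-instanton on $P_{\lambda_n}$ exists precisely when $n\in\{k-l,\,2l+k,\,-l-2k\}$ and the corresponding one among $\sigma_1(\varphi),\sigma_2(\varphi),\sigma_3(\varphi)$ is positive. For $(k,l)$ satisfying the hypotheses of that theorem and the standing non-degeneracy assumptions on $X_{k,l}$, and once we also impose $k\ne -2l$, the integers $k-l$, $2l+k$, $l+2k$ are pairwise distinct in absolute value, so $P_{k-l}$, $P_{2l+k}$, $P_{-l-2k}$ are pairwise non-isomorphic $SO(3)$-bundles. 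It therefore suffices to produce infinitely many such $(k,l)$ for which the two strictly nearly parallel structures $\varphi_1,\varphi_2\in\mathcal C$ constructed in section~\ref{sec:Aloff-Wallach} have sign vectors $\bigl(\sign\sigma_1,\sign\sigma_2,\sign\sigma_3\bigr)$ with strictly positive entries in disjoint slots --- say $\sigma_1(\varphi_1)>0\ge\sigma_2(\varphi_1),\sigma_3(\varphi_1)$ and $\sigma_2(\varphi_2)>0\ge\sigma_1(\varphi_2),\sigma_3(\varphi_2)$ --- since then $\varphi_1$ admits an invariant irreducible $G_2$-instanton only on $P_{k-l}$, $\varphi_2$ only on $P_{2l+k}$, and these bundles are topologically distinct.

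First I would make $\varphi_1$ and $\varphi_2$ explicit. The nearly parallel equation $d\varphi=\lambda\psi$, restricted to the four-parameter family $\mathcal C$, is a small polynomial system in the $\mathcal C$-coordinates whose coefficients are polynomials in $k,l$; solving it returns the two distinguished structures, one of which should be the more tractable ``canonical'' one and the other its squashed partner. Substituting both solutions into the explicit expressions for the $\sigma_i$ turns each $\sigma_i(\varphi_j)$ into a concrete algebraic function $s_{ij}(k,l)$. Since $X_{k,l}$ depends only on the primitive ratio $k:l$, each $s_{ij}$ is --- away from the finitely many excluded ratios --- a function of the single variable $t=k/l$, so the problem collapses to reading off the signs of finitely many one-variable functions and locating a nonempty open interval $I$ on which the desired disjoint-support pattern holds. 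Every rational $t=k/l\in I$ then supplies an admissible pair $(k,l)$, and there are infinitely many of them.

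Given such an interval the corollary is immediate: for $t=k/l\in I$, Theorem~\ref{thm:Intro_Ireducible_(k,l)} produces invariant irreducible $G_2$-instantons on $P_{k-l}$ for $\varphi_1$ and on $P_{2l+k}$ for $\varphi_2$, and --- by the ``only if'' half of the theorem --- on no other homogeneous $SO(3)$-bundle in either case. The two structures live on the same space $X_{k,l}$ by construction and are inequivalent by the analysis of section~\ref{sec:Aloff-Wallach}, so this is exactly the advertised phenomenon.

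The real obstacle is the middle step: showing that the six simultaneous conditions $s_{1,1}>0$, $s_{2,1}\le0$, $s_{3,1}\le0$, $s_{1,2}\le0$, $s_{2,2}>0$, $s_{3,2}\le0$ in the variable $t$ have a nonempty \emph{open} solution set rather than only isolated solutions; this needs enough control of all three $\sigma_i$ at both Einstein points to manage their signs simultaneously. It is plausible that at one of the two structures (the analogue for general $(k,l)$ of the $3$-Sasakian structure on $X_{1,1}$) one or more of the $\sigma_i$ has a sign fixed by symmetry, which would both guide and shorten the search. Everything else is bookkeeping: tracking the finitely many ratios excluded by $k\ne l$, $k\ne 2l$, $l\ne -2k$, $k\ne -2l$, shrinking $I$ to avoid the finitely many symmetry coincidences among Aloff-Wallach spaces so the examples are genuinely distinct, and --- should the clean ``only $P_{k-l}$ versus only $P_{2l+k}$'' pattern prove awkward to realize on a whole interval --- settling for any other disjoint-support sign pattern, which serves equally well.
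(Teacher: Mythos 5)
Your overall reduction is the same as the paper's (section \ref{ss:distinguish}): evaluate $\sigma_1,\sigma_2,\sigma_3$ at the two strictly nearly parallel structures $\varphi^{\pm}\in\mathcal C$ and use theorem \ref{thm:IrreducibleInstantons} to read off which bundles carry irreducible invariant instantons. But there are two genuine gaps. First, the step ``$k-l$, $2l+k$, $l+2k$ are pairwise distinct in absolute value, so $P_{k-l}$, $P_{2l+k}$, $P_{-l-2k}$ are pairwise non-isomorphic $SO(3)$-bundles'' is unjustified: distinct $n$ gives distinct \emph{homogeneous} bundles, but the corollary claims the bundles are \emph{topologically} different, and nothing forces the underlying $SO(3)$-bundles to differ just because the isotropy weights do. The paper has to prove this with lemma \ref{lem:Characteristic_Classes} and corollary \ref{cor:charclass}, namely $w_2(E_n)=n \bmod 2$ and $p_1(E_n)=n^2 \bmod (k^2+kl+l^2)$, and then check in each example that these invariants actually differ. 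This check is not bookkeeping: $H^4(X_{k,l},\mathbb Z)\cong\mathbb Z_{k^2+kl+l^2}$ is a finite cyclic group, so two distinct values of $n$ (even of distinct absolute value) can in principle have equal $w_2$ and equal $p_1$, and the verification is exactly where the ``topologically different'' part of the statement gets proved.

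Second, you never actually establish that any admissible $(k,l)$ with the required sign pattern exists; you explicitly defer ``the real obstacle,'' but that obstacle \emph{is} the content of the corollary. The paper's proof consists of solving the nearly parallel system \ref{eq:nearlyparallelG2eqns} (numerically) for $(k,l)=(1,2),(1,3),(1,4),(2,3),(2,11)$, computing $\sigma_i(\varphi^{+})$ and $\sigma_i(\varphi^{-})$ in each case, observing that exactly one $\sigma_i$ is positive for each structure but for different $i$, and then distinguishing the corresponding bundles by their characteristic classes. Your proposed alternative --- treating the sign pattern as a function of $t=k/l$ and exhibiting an open interval where a disjoint-support pattern holds --- would be a stronger, cleaner route (it would give infinitely many examples at once), but as written it is a plan, not an argument: no control of the $\sigma_i$ at the two Einstein points is provided, and without at least one verified example the statement remains unproved.
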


In subsection \ref{sss:X1-1} we consider a particular example, namely $X_{1,-1}$. As one other application of theorem \ref{thm:Intro_Ireducible_(k,l)}, we show in \ref{sss:X1-1_G2_Instantons} that $X_{1,-1}$ admits non-Abelian, irreducible $G_2$-instantons for a strictly nearly parallel $G_2$-structure. These $G_2$-instantons are also Yang-Mills, as the $G_2$-structure is nearly parallel, but contrary to the torsion free case we show in \ref{sss:YMX1-1} that they are not energy minimizing (not even locally). We refer the reader to figure \ref{fig:YMfunctional} for a contour plot of the invariant Yang-Mills functional. The results quoted above can be combined into the following

\begin{theorem}
There is a strictly nearly parallel $G_2$-structure $\varphi$ on $X_{1,-1}$ such that: 
\begin{itemize}
\item For gauge group $SO(3)$, there is an irreducible $G_2$-instanton $A$ with respect to $\varphi$.
\item As a Yang-Mills connection, $A$ is not locally energy minimizing.
\end{itemize}
\end{theorem}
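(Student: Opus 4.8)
The plan is to read off the theorem from the classification results of Section~\ref{sec:AWG2Instantons}, specialized to $(k,l)=(1,-1)$, and then to analyze the restriction of the Yang--Mills energy to the finite-dimensional space of $SU(3)$-invariant connections. Note first that $(1,-1)$ satisfies $k\neq l$, $k\neq 2l$ and $l\neq -2k$, so all of the ``generic'' results apply, and — as recalled in Section~\ref{sec:Aloff-Wallach} — both nearly parallel members of the family $\mathcal{C}$ on $X_{1,-1}$ are in fact strictly nearly parallel. For the existence of the instanton I would evaluate the three maps $\sigma_1,\sigma_2,\sigma_3\colon\mathcal{C}\to\mathbb{R}$ at these two nearly parallel structures; this is a finite computation in the explicit coordinates on $\mathcal{C}$. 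One checks that for (at least) one such $\varphi$ and one index $i$ one has $\sigma_i(\varphi)>0$, with corresponding degree $n\in\{k-l,\,2l+k,\,-l-2k\}=\{2,-1\}$. Theorem~\ref{thm:IrreducibleInstantons} (summarized in Theorem~\ref{thm:Intro_Ireducible_(k,l)}) then produces an invariant, irreducible $SO(3)$-instanton $A$ on $P_{\lambda_n}$ with respect to this $\varphi$, which is the connection of the statement; since $\varphi$ is nearly parallel, $A$ is automatically Yang--Mills by the discussion in the Preliminaries. This already gives the first bullet.

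\textbf{The non-minimality.} To prove the second bullet it suffices to exhibit connections arbitrarily $C^\infty$-close to $A$ with strictly smaller Yang--Mills energy, and for this it is enough to work inside the class of $SU(3)$-invariant connections on $P_{\lambda_n}$, which by Section~\ref{ss:InvariantConnections} is an explicit finite-dimensional manifold with explicit coordinates. Using the curvature formulas from that section I would write down the restriction $E|_{\mathcal{A}^{SU(3)}}$ as an explicit function of those coordinates — this is precisely the function whose level sets appear in Figure~\ref{fig:YMfunctional} — identify the critical point representing $A$, and exhibit a one-parameter family $\{A_t\}$ of invariant connections with $A_0=A$ along which $\left.\frac{d^2}{dt^2}\right|_{t=0}E(A_t)<0$; equivalently, one shows that the Hessian of $E|_{\mathcal{A}^{SU(3)}}$ at $A$ has a negative eigenvalue, so that $A$ is a saddle of the invariant Yang--Mills functional. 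Such a path is a path of connections on the fixed bundle $P_{\lambda_n}$ inside the full configuration space, so $A$ is not a local minimizer of $E$ there either.

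\textbf{Main obstacle.} The delicate part is this last step: it rests on having the honest, explicit expression for the Yang--Mills energy on invariant connections — which in turn depends on carrying out in full the curvature computation of Section~\ref{ss:InvariantConnections} — and then on verifying, at the \emph{specific} strictly nearly parallel $\varphi$ selected above rather than at a generic point of $\mathcal{C}$, that the critical point corresponding to $A$ is genuinely a saddle and not a (possibly degenerate) local minimum. Ruling out a degenerate Hessian, and checking that the descending direction is a deformation of connections on the fixed $SO(3)$-bundle $P_{\lambda_n}$ rather than a path leaving the invariant configuration space or changing the topological type, is where the real work lies; the contour plot in Figure~\ref{fig:YMfunctional} is exactly the computation that settles it.
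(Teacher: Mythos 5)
Your proposal is correct and follows essentially the same route as the paper: the paper writes down the strictly nearly parallel structure on $X_{1,-1}$ explicitly, evaluates the $\sigma_i$ to find that irreducible invariant instantons exist precisely on $P_{-1}$ (theorem \ref{thm:G2_Instantons_X_(1,-1)_Nearly_Parallel}, via theorem \ref{thm:IrreducibleInstantons}), and then in proposition \ref{prop:UnstableX1-1} restricts the Yang--Mills energy to invariant connections, computes the Hessian at the instanton and finds $\det(\mathrm{Hess})<0$, so it is a saddle and hence not locally minimizing. The degeneracy worry you raise is settled by exactly this determinant computation, and the descending direction automatically stays within connections on the fixed bundle $P_{-1}$ since invariant connections form an affine subspace of that configuration space.
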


We now turn to the case when either $k=l$, or $k=2l$, or $l =-2k$ which was excluded from the previous results. Using the action of the Weyl group of $SU(3)$, and up to coverings, we may assume that $k=l=1$ so we are working on $X_{1,1}$. This case is analyzed in section \ref{sec:X11}. As already remarked before, on $X_{1,1}$ the $G_2$-structures we consider, i.e. those in $\mathcal{C}$, are not all the homogeneous, coclosed ones. Nevertheless, $\mathcal{C}$ does contain nearly parallel $G_2$-structures, inducing $2$ different metrics, one of which is tri-Saskian and the other strictly nearly parallel. There is however, one other homogeneous nearly parallel $G_2$-structure not in $\mathcal{C}$, which a Sasaki-Einstein metric. Our first result for $X_{1,1}$ is theorem \ref{thm:abelianinstantonsX11}, which classifies invariant Abelian $G_2$-instantons with respect to the $\varphi \in \mathcal{C}$. The statement is similar to the case $k \neq l$ in theorem \ref{thm:Intro_Reducible_(k,l)}. As in that case, the generic $\varphi$ admits a unique invariant $G_2$-instanton on any line bundle, however there do exist $\varphi \in \mathcal{C}$ so that the space of invariant $G_2$-instantons on any complex line bundle is $3$-dimensional. In fact, this can be interpreted in light of a more general phenomenon explained in proposition \ref{prop:3_Parameter_Family}. Then, in theorem \ref{thm:irreducibleinstantonsX11} we consider $SO(3)$-bundles over $X_{1,1},$ and for all $\varphi \in \mathcal{C}$ classify irreducible invariant $G_2$-instantons on them. The statement is however very similar to that of theorem \ref{thm:Intro_Ireducible_(k,l)} and we shall omit it in this introduction. Instead, we state here corollary \ref{cor:NoInstantons} which a direct application of that result. Its content being that the existence of invariant $G_2$-instantons, with gauge group $SO(3)$, distinguishes between the $G_2$-structures inducing the tri-Saskian and the strictly nearly parallel metrics.

\begin{theorem}
Let $\varphi^{ts}$ and $\varphi^{np}$ be respectively the $G_2$-structures inducing the tri-Sasakian and the strictly nearly parallel metrics on $X_{1,1}$. Then, there are no irreducible invariant $G_2$-instantons with gauge group $SO(3)$ for $\varphi^{ts},$ but such $G_2$-instantons do exist for $\varphi^{np}$.
\end{theorem}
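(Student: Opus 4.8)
The plan is to deduce this theorem directly from the $X_{1,1}$ classification result (theorem~\ref{thm:irreducibleinstantonsX11}, the $X_{1,1}$ analogue of theorem~\ref{thm:Intro_Ireducible_(k,l)}), exactly as corollary~\ref{cor:NoInstantons}. Concretely, the classification will express the existence of an irreducible invariant $G_2$-instanton on each homogeneous $SO(3)$-bundle $P_{\lambda_n}$ in terms of the sign of finitely many explicit real-analytic functions $\sigma_i$ of the $G_2$-structure $\varphi\in\cC$, together with an arithmetic constraint pinning down the admissible degrees $n$. So the proof reduces to two computations: first, locate the points $\varphi^{ts}$ and $\varphi^{np}$ inside the $4$-parameter family $\cC$; second, evaluate the relevant $\sigma_i$ (and check the degree conditions) at these two points.

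First I would recall from section~\ref{sec:X11} (and section~\ref{sec:Aloff-Wallach}) the explicit parametrization of $\cC$ adapted to $X_{1,1}$, and identify which parameter values give the two nearly parallel structures: by the discussion in the introduction, $\cC$ meets the nearly parallel locus in exactly two points, one inducing the tri-Sasakian metric and one inducing the strictly nearly parallel metric, and these are singled out among homogeneous coclosed $G_2$-structures by the nearly parallel condition $d\varphi=\lambda\psi$. Writing down $\varphi^{ts}$ and $\varphi^{np}$ in the chosen coordinates is a routine matter of matching the induced metrics (the tri-Sasakian metric being the distinguished one with enhanced isometry group). Then I would substitute these two parameter points into the formulas for $\sigma_1,\sigma_2,\sigma_3$ supplied by theorem~\ref{thm:irreducibleinstantonsX11}.

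The content of the theorem is then the outcome of this substitution: at $\varphi^{ts}$ all the relevant $\sigma_i$ come out non-positive (so no admissible degree $n$ yields an irreducible invariant $G_2$-instanton), while at $\varphi^{np}$ at least one $\sigma_i$ is strictly positive for an allowed $n$, producing the desired instanton. For the positivity half it suffices to exhibit one bundle and one connection; for the vanishing half one must rule out \emph{every} homogeneous $SO(3)$-bundle, which is where one genuinely uses that theorem~\ref{thm:irreducibleinstantonsX11} is a complete classification rather than an existence statement.

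The main obstacle is not conceptual but bookkeeping: one must be careful that the classification on $X_{1,1}$ is being applied correctly, since $X_{1,1}$ is the excluded case where $\cC$ does \emph{not} exhaust the homogeneous coclosed $G_2$-structures and where the embedded $U(1)$ has extra symmetry (the Weyl group of $SU(3)$ acts), so the list of homogeneous $SO(3)$-bundles and the normalization of the maps $\sigma_i$ differ from the generic $X_{k,l}$ case and must be taken from section~\ref{sec:X11} specifically. Checking that $\varphi^{ts}$ really lies in the region where all $\sigma_i\le 0$ — rather than merely on the boundary for some and positive for another — is the delicate point, and it is precisely here that the tri-Sasakian symmetry should force the vanishing; once that sign computation is done the theorem follows.
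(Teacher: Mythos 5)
Your proposal matches the paper's own argument: the theorem is exactly corollary \ref{cor:NoInstantons}, which is proved by locating the two nearly parallel structures inside the family \ref{eq:G2str} (tri-Sasakian: $A^2=2B^2$, $ABCD>0$; strictly nearly parallel: $A^2=2B^2/5$, $ABCD<0$) and evaluating the signs of $\sigma_1,\sigma_2,\sigma_3$ from the complete classification in theorem \ref{thm:irreducibleinstantonsX11}, finding all three non-positive in the first case and $\sigma_1>0$ (instantons on the trivial bundle $P_{\lambda_0}$) in the second. No substantive difference from the paper's route.
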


\subsection*{Acknowledgements}

We would like to thank Robert Bryant, Mark Haskins, Jason Lotay, Henrique S\'a Earp, Mark Stern, and Thomas Walpuski for conversations. In particular, we thank Thomas Walpuski for having kindly suggested the interpretation given in theorem \ref{thm:G2_Instanton_Obstructed} and its visualization through the figures in examples \ref{ex:(k,l)=(1,-1)} and \ref{ex:(k,l)=(1,-5)}.

\section{Gauge theory and coclosed $G_2$-structures}\label{sec:Gauge_Theory_And_Colosed_G2_Structures}

\subsection{Background}\label{ss:Background}

This section starts off in \ref{ss:Coclosed_G2_Strs} with some basic facts about $G_2$-structures\footnote{see \cite{Bryant2006} for more on this and other aspects of $G_2$-structures} {and their torsion}. In \ref{ss:Gauge_Theory} we recall some background on $G_2$-gauge theory. In particular, we identify the coclosed $G_2$-structures, i.e. those for which $d\psi =0$, as the ones for which the $G_2$-instanton equation lies in an elliptic complex. Then, in \ref{ss:Deformation_Theory} we derive some general results on the deformation theory of $G_2$-instantons. These will be used to give an abstract result, proposition \ref{prop:MonLineBundle}, yielding a criteria for when a $G_2$-structure has the property that any circle bundle processes a $G_2$-instanton. As a consequence, in corollary \ref{cor:G2InstLineBundle} this result is applied in the strictly nearly parallel setting.

\subsubsection{Coclosed $G_2$-structures}\label{ss:Coclosed_G2_Strs}

\subsubsection*{Torsion of a $G_2$-structure}

Fernandez-Gray first classified the torsion of $G_2$-structures in \cite{Fernandez1982} by decomposing $\nabla \varphi$ into irreducible $G_2$-representations. The components of $d \varphi$ and $d\psi= d \ast \varphi$ can then be written in terms of those of $\nabla \varphi$. What is nontrivial, but easily checked using the representation theory of $G_2$, is that the converse is also true. Recall that the $2$-forms and $3$-forms decompose into irreducible $G_2$-representations as $\Lambda^2 \cong \Lambda^2_7 \oplus \Lambda^2_{14}$ and $\Lambda^3 = \Lambda^3_1 \oplus \Lambda^3_7 \oplus \Lambda^3_{27}$, where the subscript denotes the dimension of the representation. The Hodge-$\ast$ is an isomorphism of representations and so induces isomorphic decompositions in $\Lambda^4$ and $\Lambda^5$. Using these decompositions the Fernandez-Gray classification can be recast as follows. Given a $G_2$-structure $\varphi$, we have
$$d \varphi = \tau_0 \psi + 3 \tau_1 \wedge \varphi + \ast \tau_3, \ \ d \psi = 4 \tau_1 \wedge \psi + \tau_2 \wedge \varphi $$
for some uniquely determined $\tau_0 \in \Omega^0(X)$, $\tau_1 \in \Omega^1(X)$, $\tau_2 \in \Omega^2_{14}(X)$ and $\tau_3 \in \Omega^3_{27}(X)$. Of special interest to us will be the case when the $G_2$-structure is coclosed, i.e. when $d \psi = d \ast \varphi =0$. Then, $\tau_1 = \tau_2 =0$ and $d \varphi = \tau_0 \psi + \ast \tau_3$.\\
For future reference we shall use $\pi_i$ for $i=1,7,14,27$ to denote the projection onto an $i$-dimensional irreducible representation. For example, if $\omega$ is a two form we shall denote by $\pi_7 (\omega)$ the component of $\omega \in \Lambda^2_7$.

\subsubsection*{Nearly parallel $G_2$-structures}

We now turn to the definition of nearly parallel $G_2$-structures. Given a closed, oriented, $7$-manifold $(X^7, \varphi)$ equipped with a $G_2$-structure, its metric cone $(\mathbb{R}^+ \times X^7 , g_C=dr^2 + r^2 g_{\varphi})$ comes equipped with a $Spin(7)$-structure determined by $\Omega= r^3 dr \wedge \varphi + r^4 \psi$. From the Riemannian holonomy point of view, if $g_C$ is nonsymmetric its holonomy is one of the groups in the following ascending chain
$$\lbrace 1 \rbrace \subset Sp(2) \subset SU(4) \subset Spin(7) \subset SO(8).$$
Equivalently, thinking of $G_2$ has the group stabilizing a nonvanishing spinor in $7$-dimensions, the groups above are possible stabilizers of spinors in $8$-dimensions and each is determined by the number of linearly independent spinors fixed. In the language of spinors, the condition that the holonomy reduces to one of the groups above is then that the respective spinors are parallel.

\begin{definition}\label{def:NearlyParallel}
As a Riemannian manifold $(X^7, g_{\varphi})$ is said to be nearly parallel if $\Hol(g_{C}) \subseteq Spin(7)$, and if $\Hol(g_C)$ is $Sp(2)$, $SU(4)$, or $ Spin(7)$, $g_{\varphi}$ is said to be tri-Sasakian, Sasaki-Einstein, or strictly nearly parallel, respectively. Similarly, we shall say that the $G_2$-structure $\varphi$ is strictly nearly parallel if $\Hol(g_{\varphi}) = Spin(7)$.
\end{definition}

Strictly nearly-parallel $G_2$-structures, can also be equivalently characterized in terms of differential forms. Notice that, given a metric $g$ on $X^7$, the cone metric $g_C = dr^2 + r^2 g$ has holonomy contained in $Spin(7)$ if and only if there is a compatible $G_2$-structure $\varphi$, such that the $4$-form $\Omega= r^3 dr \wedge \varphi + r^4 \psi$ is closed. That is the case if and only if $d \varphi = 4 \psi$, which up to scaling and changing the orientation can be written as
\begin{equation}\label{eq:StrictlyNearlyParallel}
d \varphi = \lambda \psi,
\end{equation}
for some $\lambda \in \mathbb{R} \backslash \lbrace 0 \rbrace$. Notice that, as $\psi$ is exact, this implies $d \psi=0$ and from the point of view of torsion of $G_2$-structures $\varphi$ is coclosed, meaning that all $\tau_1, \tau_2, \tau_3$ vanish and $\tau_0= \lambda$ is the only nonzero component. As $\tau_0$ is the torsion component living in the smallest irreducible representation, we may think of strictly nearly parallel $G_2$-structures as the closer to become parallel.

\begin{remark}\label{rem:ParallelAndNearlyParallel}
In fact, notice that if we require that $d \psi=0$ separately and allow $\lambda$ to vanish, then equation \ref{eq:StrictlyNearlyParallel} also include the torsion free case. This shall be useful as some arguments used for strictly nearly parallel $G_2$-structures also work in the torsion free case. 
\end{remark}

In \cite{Friedrich1997}, the authors classify homogeneous nearly parallel $G_2$-manifolds, and give a construction of strictly nearly parallel $G_2$-structures starting from tri-Sasakian manifolds. We shall recall and use this construction in section \ref{sec:NearlyParallel}.

\subsubsection{Gauge Theory}\label{ss:Gauge_Theory}

Let $G$ be a compact semisimple Lie group and $P$ a principal $G$-bundle over a manifold $X$, equipped with a $G_2$-structure $\varphi$. Recall that a connection $A$ on $P$ is called a $G_2$-instanton if $F_A \wedge \psi=0$, equivalently if $\pi_7 (F_A)=0$, or if the following analogue of anti-self-duality holds:

\begin{equation}
\ast F_A = - F_A \wedge \varphi.
\end{equation} 

On the other hand, a connection $A$ is said to be Yang-Mills if it is a critical point of the Yang-Mills energy
\begin{equation}
E(A) = \frac{1}{2} \int_X \vert F_A \vert^2 \dvol_g,
\end{equation}
and so satisfies the Yang-Mills equation $d_A^* F_A=0$, which together with the Bianchi identity $d_A F_A=0$ forms a second order elliptic system for the connection (up to gauge). $G_2$-instantons satisfy a first order equation which in this generality need not imply they are Yang-Mills connections. Nevertheless we have the following folklore result, which in the nearly parallel case is due to Harland and N\"olle in \cite{Harland2011}.

\begin{proposition}\label{prop:NearlyG2YM}(\cite{Harland2011})
If the $G_2$-structure is either parallel or nearly parallel, then any $G_2$-instanton is a Yang-Mills connection. 
\end{proposition}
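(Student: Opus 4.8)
The plan is to show that for a $G_2$-structure satisfying the condition $d\varphi = \lambda \psi$ with $d\psi = 0$ (which by Remark \ref{rem:ParallelAndNearlyParallel} covers both the parallel case $\lambda = 0$ and the nearly parallel case $\lambda \neq 0$), every solution of $F_A \wedge \psi = 0$ automatically satisfies the Yang-Mills equation $d_A^* F_A = 0$. The starting point is the $G_2$-instanton equation in the form $\ast F_A = -F_A \wedge \varphi$, already recorded in the excerpt. I would simply apply $d_A$ to both sides and compute.

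The key steps, in order: First, recall that $d_A^* F_A = -\ast d_A \ast F_A$ (up to the usual sign depending on conventions), so it suffices to show $d_A \ast F_A = 0$. Second, substitute the instanton equation to get $d_A \ast F_A = -d_A(F_A \wedge \varphi)$. Third, expand using the Leibniz rule: $d_A(F_A \wedge \varphi) = (d_A F_A) \wedge \varphi + F_A \wedge d\varphi$. Fourth, invoke the Bianchi identity $d_A F_A = 0$ to kill the first term, leaving $d_A \ast F_A = -F_A \wedge d\varphi$. Fifth, use the structure equation $d\varphi = \lambda \psi$ to rewrite this as $d_A \ast F_A = -\lambda\, F_A \wedge \psi$. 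Sixth, observe that the right-hand side vanishes: $F_A \wedge \psi = 0$ is exactly the instanton equation again. Hence $d_A \ast F_A = 0$, so $d_A^* F_A = 0$ and $A$ is Yang-Mills. In the parallel case $\lambda = 0$ the term $F_A \wedge d\varphi$ vanishes directly without even needing $F_A \wedge \psi = 0$, recovering the classical fact.

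There is essentially no main obstacle here — the argument is a two-line formal manipulation once the instanton equation is written in the $\ast F_A = -F_A \wedge \varphi$ form and one knows $d\varphi = \lambda\psi$. The only points requiring a modicum of care are bookkeeping: getting the sign and the precise normalization in $d_A^* = \pm \ast d_A \ast$ correct for $\mathrm{ad}\,P$-valued $2$-forms on a $7$-manifold, and making sure the Leibniz rule for $d_A$ acting on a product of an $\mathrm{ad}\,P$-valued form with an ordinary form is applied correctly (the ordinary form $\varphi$ contributes an honest $d\varphi$, not a covariant derivative). I would state these conventions once at the top and then let the computation run. No delicate analysis, elliptic theory, or representation theory is needed for this particular proposition; the elliptic-complex discussion elsewhere in the paper is not invoked.
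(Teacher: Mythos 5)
Your argument is correct and is essentially identical to the paper's own proof: both apply $d_A$ to $\ast F_A = -F_A \wedge \varphi$, then use the Leibniz rule, the Bianchi identity, $d\varphi = \lambda\psi$, and the instanton equation $F_A \wedge \psi = 0$ to conclude $d_A \ast F_A = 0$. Nothing further is needed.
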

\begin{proof}
If the $G_2$-structure is either parallel or strictly nearly parallel, $d \psi=0$ and $d \varphi = \lambda \psi$ for some $\lambda \in \mathbb{R}$, as in remark \ref{rem:ParallelAndNearlyParallel}. Then, if $A$ is a $G_2$-instanton, $\ast F_A = - F_A \wedge \varphi$ and so
$$d_A \ast F_A = - d_A (F_A \wedge \varphi)= \lambda F_A \wedge \psi =0.$$
where in the last equality we use the Bianchi identity and $d\varphi = \lambda \psi$.
\end{proof}

The Yang-Mills energy can be equivalently written as 
\begin{equation}\label{eq:action}
E(A)= -\frac{1}{2} \int_X \langle F_A \wedge F_A \rangle \wedge \varphi + \frac{1}{2} \Vert F_A \wedge \psi  \Vert^2_{L^2}  .
\end{equation}
In particular, if $\varphi$ is torsion free, then the first term is topological and $G_2$-instantons minimize the Yang-Mills energy. It is then a natural question to ask if the same must hold for nearly parallel $G_2$-structures. We shall show in example \ref{ex:UnstableS7} that is not the case, by providing an example of a nearly parallel $G_2$-structure, together with a $G_2$-instanton which is unstable as a Yang-Mills connection.

\begin{remark}
The variation of the Yang-Mills functional at a connection $A$ is
\begin{equation}
\delta^2 E_A (a) = \frac{d^2}{ds^2} |_{s=0} E(A+sa) = \int_X \vert d_A a \vert^2 - \langle [a \wedge a] , F_A \rangle,
\end{equation}
and so we may instead think of the second order operator $H=d_A^* d_A a - \ast [a \wedge \ast F_A]$.
\end{remark}

When the $G_2$-structure $\varphi$ is coclosed the $G_2$-instaton equation lies on the elliptic complex
\begin{equation}\label{eq:ComplexInstanton}
\Omega^0(X , \mathfrak{g}_P) \xrightarrow{-d_A \cdot } \Omega^1(X , \mathfrak{g}_P) \xrightarrow{d_A \cdot \wedge \psi} \Omega^6(X \mathfrak{g}_P) \xrightarrow{d_A} \Omega^7(X, \mathfrak{g}_P).
\end{equation}
Hence, in the coclosed case the $G_2$-instanton equation is elliptic modulo gauge (rather than overdetermined). From now on we shall suppose this is the case.

\begin{remark}\label{rem:NoMonopoles}
\begin{enumerate}
\item The reason why the $G_2$-instaton equation is consistent in the torsion free case can be interpreted as follows. The $G_2$-monopole equation
$$\ast \nabla_A \Phi = F_{A} \wedge \psi,$$
is always elliptic modulo gauge. Moreover, if $\varphi$ is coclosed, then the monopole equation, $d \psi=0$ and the Bianchi identity $d_A F_A=0$, give $\Delta_A \Phi=0$. We can then compute $\Delta \vert \Phi \vert^2 = - 2 \vert \nabla_A \Phi \vert^2 \leq 0$,
and the maximum principle implies that $\vert \Phi \vert^2$ is constant. Then $\vert \nabla_A \Phi \vert^2$ must vanish, and the monopole equation reduces to the $G_2$-instaton equation. Furthermore, the fact that $\nabla_A \Phi=0$, implies that if $\Phi \neq 0$, and $G$ is semisimple, then $A$ must be reducible.

\item To conclude this remark we point out that if the $G_2$-structure $\varphi$ is not coclosed one may ask a similar questions to those answered in this paper, but for $G_2$-monopoles rather than $G_2$-instantons.
\end{enumerate}
\end{remark}

In particular, if $(X, \varphi)$ is a compact irreducible $G_2$ manifold, i.e. the holonomy of the metric $g_{\varphi}$ induced by $\varphi$ is equal to $G_2$, any harmonic $2$-form can be shown to be of type $\Lambda^2_{14}$ and so if $F \in \Omega^2(X)$ is harmonic and has integer periods, it defines the curvature of a connection on a line bundle whose first Chern class is $[F]/ 2\pi i$. Still in the torsion free case, Thomas Walpuski, in \cite{Walpuski2011} and \cite{Walpuski2015} using the results of \cite{SaEarp2015}, constructed the only known examples of non-Abelian $G_2$-instantons on compact, irreducible, $G_2$-manifolds. There are also examples in the noncompact case, see \cite{Clarke14}, \cite{Oliveira2014} and \cite{Lotay2016}.\\

\subsubsection{Deformation theory and Abelian $G_2$-instantons}\label{ss:Deformation_Theory}
 
The main idea for this approach to the deformation theory comes from remark \ref{rem:NoMonopoles}. This suggests that given a coclosed $G_2$-structure, instead of studying the deformation theory of an irreducible $G_2$-instanton $A$ we may instead study that of a $G_2$-monopole $(A, \Phi)$ with $\Phi=0$. Before restricting to that case suppose for now that $\Phi \neq 0$, then the relevant elliptic complex is
\begin{equation}\label{eq:ComplexMonopole}
\Omega^0(X , \mathfrak{g}_P) \xrightarrow{d_1} \Omega^1(X , \mathfrak{g}_P) \oplus \Omega^0(X , \mathfrak{g}_P) \xrightarrow{d_2} \Omega^1(X , \mathfrak{g}_P),
\end{equation}

with $d_1(\phi)=(-d_A \phi , [ \phi , \Phi ] )$ and $d_2(a, \phi)=\ast (d_A a \wedge \psi) - [ a , \Phi ] -d_A \phi$. Equivalently, we can consider the elliptic operator $d_1^* \oplus d_2 :  \Omega^1(X , \mathfrak{g}_P) \oplus \Omega^0(X , \mathfrak{g}_P) \rightarrow  \Omega^1(X , \mathfrak{g}_P) \oplus \Omega^0(X , \mathfrak{g}_P)$
$$(d_1^* \oplus d_2)(a, \phi) = (\ast (d_A a \wedge \psi)-d_A \phi , -d_A^* a) + ( [\Phi , a] , [\Phi, \phi]),$$
which is self adjoint when $\varphi$ is coclosed. The following result, which is a consequence of remark \ref{rem:G2fromNK}, shows that in the coclosed case any infinitesimal monopole deformation of a $G_2$-instanton is actually an infinitesimal instanton deformation. This fully justifies studying the deformation theory of the complex \ref{eq:ComplexMonopole}. 

\begin{proposition}
Let $A$ be an irreducible $G_2$-instanton with respect to a coclosed $G_2$-structure on a closed manifold. Then, if $(a, \phi) \in \ker(d_2)$, where $d_2$ is the operator associated with $(A, 0)$ we have $\phi=0$.
\end{proposition}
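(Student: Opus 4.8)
The plan is to carry out the infinitesimal analogue of the maximum‑principle computation in Remark \ref{rem:NoMonopoles}: I will show that $\phi$ is $d_A$-harmonic, deduce $d_A\phi = 0$ on the closed manifold $X$, and then invoke irreducibility of $A$ (together with semisimplicity of $G$) to conclude $\phi = 0$. No gauge‑fixing condition on $(a,\phi)$ will be needed; membership in $\ker(d_2)$ is enough.

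First I would unwind the hypothesis. Since $d_2$ is the operator at the monopole $(A,0)$, setting $\Phi = 0$ in $d_2(a,\phi) = \ast(d_A a \wedge \psi) - [a,\Phi] - d_A\phi$ shows that $(a,\phi) \in \ker(d_2)$ is equivalent to
\[
d_A\phi = \ast(d_A a \wedge \psi).
\]
Applying $d_A^*$, and using that on $1$-forms in dimension $7$ one has $d_A^* = -\ast d_A \ast$ while $\ast\ast = \id$ on $\Omega^6$, this becomes $d_A^* d_A\phi = -\ast\, d_A(d_A a \wedge \psi)$.

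Next I would show the right-hand side vanishes, using \emph{both} hypotheses. By the Leibniz rule $d_A(d_A a \wedge \psi) = (d_A^2 a)\wedge\psi + d_A a \wedge d\psi$; the second term drops because $\varphi$ is coclosed, i.e. $d\psi = 0$. For the first term $d_A^2 a = [F_A \wedge a]$, so we are left with $[F_A \wedge a]\wedge\psi$. A degree count ($F_A$ a $2$-form, $a$ a $1$-form, $\psi$ a $4$-form) lets me reorder the form factors to identify this, up to sign, with $[(F_A\wedge\psi)\wedge a]$; since $A$ is a $G_2$-instanton we have $F_A \wedge \psi = 0$, hence $[F_A \wedge a]\wedge\psi = 0$ and therefore $d_A^* d_A\phi = 0$. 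Pairing with $\phi$ and integrating by parts over the closed manifold $X$ gives $\|d_A\phi\|_{L^2}^2 = 0$, so $\phi$ is covariantly constant; as already observed in Remark \ref{rem:NoMonopoles}, a covariantly constant section of $\mathfrak{g}_P$ with $G$ semisimple forces $A$ to be reducible unless it vanishes, and since $A$ is irreducible we conclude $\phi = 0$.

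The only places that require care are the sign and degree bookkeeping in the identity $[F_A\wedge a]\wedge\psi = \pm[(F_A\wedge\psi)\wedge a]$ and in the formula for $d_A^*$; these are routine. The conceptual content — and what I would flag as the essential point rather than an obstacle — is simply that the coclosedness of $\varphi$ and the instanton equation for $A$ together make $\phi$ harmonic, exactly as the monopole argument of Remark \ref{rem:NoMonopoles} makes $\Phi$ harmonic.
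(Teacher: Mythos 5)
Your proof is correct and follows essentially the same route as the paper: from $d_A\phi=\ast(d_A a\wedge\psi)$ you compute $d_A^*d_A\phi=-\ast d_A(d_Aa\wedge\psi)=-\ast\bigl([F_A\wedge a]\wedge\psi\bigr)=0$ using $d\psi=0$, the Bianchi identity and $F_A\wedge\psi=0$, then integrate by parts and invoke irreducibility. The only difference is cosmetic: the paper also records the gauge-fixing condition $d_A^*a=0$, which neither argument actually uses.
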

\begin{proof}
Let $(a, \phi) \in \ker(d_2)$. Then, $d_A \phi=\ast (d_A a \wedge \psi)$, and $d_A^* a=0$. Combining these and using that $\psi$ is closed, we compute 
$$d_A^* d_A \phi = - \ast d_A (d_A a \wedge \psi) = - \ast [F_A \wedge a] \wedge \psi.$$
This vanishes as $A$ is a $G_2$-instanton and so $F_A \wedge \psi=0$. Then taking the inner product with $\phi$ gives $d_A \phi=0$ and so $\phi$ must vanish as $A$ is assumed to be irreducible.
\end{proof}

Next we shall study the operator $d_1^* \oplus d_2$ for the trivial connection $A=d$. It will be used later to give an existence result for $G_2$-instantons in the Abelian case

\begin{lemma}\label{lem:DiracSurjective}
Let $L$ be the operator 
$$L: L^{2,1} (\Lambda^0 \oplus \Lambda^1) \rightarrow L^{2} (\Lambda^0 \oplus \Lambda^1),$$
given by $L(f , a )= (-d^* a , -df + \ast (da \wedge \psi))$. Its cokernel can be identified with those $(g,b)\in \Omega^0(X) \oplus \Omega^1(X)$ such that $g$ is constant and $b$ is a coclosed $1$-form satisfying $d(b \wedge \psi)=0$.\\
In particular, if $(X, \varphi)$ has the property that there are no coclosed $1$-forms $b$, such that $d(b \wedge \psi)=0$, then $L$ is surjective onto $\Omega_0^0 (X) \oplus \Omega^1(X)$, where $\Omega_0^0 (X)$ denotes the functions with zero average on $X$. 
\end{lemma}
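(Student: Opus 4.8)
The plan is to identify the cokernel of $L$ by a direct self-adjointness argument. Since $\varphi$ is coclosed, $\psi$ is closed, and the operator $L$ on $L^{2,1}(\Lambda^0 \oplus \Lambda^1)$ given by $L(f,a) = (-d^*a, -df + \ast(da \wedge \psi))$ is (formally) self-adjoint: one checks, integrating by parts on the closed manifold $X$, that $\langle L(f,a), (g,b)\rangle_{L^2} = \langle (f,a), L(g,b)\rangle_{L^2}$, the only non-obvious term being $\langle \ast(da \wedge \psi), b\rangle = \int_X da \wedge \psi \wedge b = \int_X a \wedge d(\psi \wedge b) \pm \int_X a \wedge \psi \wedge db$, which after using $d\psi = 0$ is symmetric in $a$ and $b$ up to sign bookkeeping that matches the two pieces of $L$. (It is exactly the operator $d_1^*\oplus d_2$ of the preceding discussion, specialized to the trivial connection and abelian structure group, so its self-adjointness was already noted.) Hence the cokernel of $L$, as a map between the stated Sobolev spaces, is the kernel of its formal adjoint, namely the $(g,b)$ with $L(g,b) = 0$ (elliptic regularity makes such $(g,b)$ smooth, so there is no loss in computing in smooth forms).

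Next I would unwind $L(g,b) = 0$. The first component gives $d^*b = 0$, i.e. $b$ is coclosed. The second component gives $-dg + \ast(db \wedge \psi) = 0$. To extract information, pair this with $g$ and integrate: $0 = \int_X \langle -dg + \ast(db\wedge\psi), g\rangle\, \dvol = -\int_X |dg|^2 + \int_X g\, db \wedge \psi$. For the last integral, integrate by parts using $d\psi = 0$: $\int_X g\, db\wedge \psi = \int_X d(g\, b \wedge \psi) - \int_X dg \wedge b \wedge \psi = -\int_X dg\wedge b \wedge \psi = -\int_X \langle dg, \ast(b\wedge\psi)\rangle\,\dvol$. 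But from the equation $\ast(db\wedge\psi) = dg$ — wait, I need to be a touch more careful and instead feed in that $dg$ is exact and $b\wedge\psi$ is... the cleaner route is: take $d$ of the second equation. Since $d(dg) = 0$ we get $d\bigl(\ast(db\wedge\psi)\bigr) = 0$; but actually the most economical argument is just the $L^2$ pairing of the second equation with $dg$ itself combined with the pairing with $b$, to conclude both that $dg = 0$ (so $g$ is constant, $X$ connected) and that $db\wedge\psi = 0$, hence $d(b\wedge\psi) = db\wedge\psi = 0$ since $d\psi=0$. So the cokernel is $\{(g,b) : g \text{ constant},\ d^*b = 0,\ d(b\wedge\psi)=0\}$, as claimed.

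For the ``in particular'' clause: if $X$ admits no nonzero coclosed $1$-form $b$ with $d(b\wedge\psi) = 0$, then the cokernel above reduces to pairs $(g, 0)$ with $g$ constant, i.e. a $1$-dimensional space spanned by $(1,0)$. Therefore the image of $L$ is the $L^2$-orthogonal complement of the constants in the $\Lambda^0$ factor together with all of $\Lambda^1$, which is precisely $\Omega^0_0(X) \oplus \Omega^1(X)$ (the functions of zero average, plus all $1$-forms); here one also uses that $L$ is Fredholm — it is elliptic modulo gauge, equivalently the relevant associated operator is elliptic and self-adjoint — so that the image is closed and equals the annihilator of the cokernel.

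The main obstacle I anticipate is the careful bookkeeping in the pairing argument: getting the signs right in the integration-by-parts identities $\int_X da\wedge\psi\wedge b = \pm\int_X a \wedge \psi \wedge db$ (using $d\psi = 0$) and deducing simultaneously that $dg$ vanishes \emph{and} that $db\wedge\psi$ vanishes, rather than just one linear combination of these. One clean way to separate them: the equation $dg = \ast(db\wedge\psi)$ shows $\|dg\|_{L^2}^2 = \langle dg, \ast(db\wedge\psi)\rangle = \int_X dg\wedge db\wedge\psi = \int_X d(g\,db\wedge\psi) = 0$ using $d(db\wedge\psi) = db\wedge d\psi = 0$ and Stokes; hence $g$ is constant and then $\ast(db\wedge\psi) = 0$, i.e. $db\wedge\psi = 0$. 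Everything else is formal: self-adjointness, elliptic regularity to pass from $L^2$-cokernel to smooth solutions of the adjoint equation, and Fredholmness to identify $\im L$ with the annihilator of $\coker L$.
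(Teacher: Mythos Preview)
Your proof is correct and reaches the same conclusion as the paper, but by a slightly different route. The paper does not invoke self-adjointness of $L$; instead it computes the formal adjoint $L^*(g,b)=(-d^*b,\,-dg+\ast d(b\wedge\psi))$ explicitly, then evaluates $\langle (g,b),\,LL^*(g,b)\rangle_{L^2}$ and, after one integration by parts, obtains this as the sum $\|dg\|^2+\|d^*b\|^2+\|d(b\wedge\psi)\|^2$, from which all three conditions drop out simultaneously. You instead observe (correctly, since $d\psi=0$) that $L=L^*$, reduce to analysing $\ker L$, and use the single identity $\|dg\|^2=\int_X dg\wedge db\wedge\psi=\int_X d(g\,db\wedge\psi)=0$ to first force $g$ constant, then read off $db\wedge\psi=0$ from the remaining equation. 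Your argument is a touch more economical in this coclosed setting; the paper's $LL^*$ computation has the mild advantage that it never uses $d\psi=0$ and packages the three vanishing conditions as a single sum of squares. Either way the content is the same.
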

\begin{proof}
We shall identify the cokernel of $L$ with the kernel of its formal adjoint $L^*$, using the $L^2$-inner product. Then, one computes that $L^*(g , b )= (-d^* b , -dg + \ast d(b \wedge \psi))$, and so
\begin{eqnarray}\nonumber
LL^* (g , b) & = & \left( \Delta g , dd^* b \right) + \left( 0 , \ast (d \ast d(b \wedge \psi) \wedge \psi) \right).
\end{eqnarray}
By taking the $L^2$ inner product with $(g,b)$ and using Stokes' theorem we obtain
\begin{eqnarray}\nonumber
 \langle (g,b) , LL^* (g , b) \rangle_{L^2} & = & \Vert dg \Vert^2_{L^2} + \Vert d^*b \Vert^2_{L^2} + \langle b , \ast (d \ast d(b \wedge \psi) \wedge \psi) \rangle_{L^2} \\ \nonumber
& = & \Vert dg \Vert^2_{L^2} + \Vert d^*b \Vert^2_{L^2} + \int_X  b \wedge d \ast d(b \wedge \psi) \wedge \psi \\ \nonumber
& = & \Vert dg \Vert^2_{L^2} + \Vert d^*b \Vert^2_{L^2} + \int_X  d (b \wedge \psi ) \wedge \ast d(b \wedge \psi)  \\ \nonumber
& = & \Vert dg \Vert^2_{L^2} + \Vert d^*b \Vert^2_{L^2} +  \Vert d (b \wedge \psi ) \Vert^2_{L^2}  .
\end{eqnarray}
Hence if $(g,b)$ is in the kernel of $L^*$, then also $L L^*(g,b)=0$ and the computation above shows that $dg=d^*b = d(b \wedge \psi)=0$.
\end{proof}

The following result gives a criteria for an abstract construction of Abelian $G_2$-instantons.

\begin{proposition}\label{prop:MonLineBundle}
Suppose $(X, \varphi)$ has no nonzero coclosed $1$-forms $b$ such that $d(b \wedge \psi)=0$ and $L$ is a complex line bundle over $X$, then there is a monopole $(\phi, A)$ on $L$.\\
Moreover, if $\varphi$ is coclosed, then any such monopole is actually a $G_2$-instanton and it is unique.
\end{proposition}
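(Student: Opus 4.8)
The statement has two halves: existence of a monopole on any complex line bundle $L$, and in the coclosed case, the promotion of that monopole to a $G_2$-instanton together with uniqueness. For the existence half, the plan is to use Lemma \ref{lem:DiracSurjective}: since $(X,\varphi)$ has no nonzero coclosed $1$-forms $b$ with $d(b\wedge\psi)=0$, the operator $L$ is surjective onto $\Omega_0^0(X)\oplus\Omega^1(X)$. Now fix a reference connection $A_0$ on $L$ (for instance one whose curvature is the harmonic representative of $c_1(L)$, or any smooth connection), and seek the desired monopole in the form $(\phi, A_0 + a)$ with $\phi\in\Omega^0(X)$ and $a\in\Omega^1(X)$. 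The monopole equation $\ast\nabla_A\Phi = F_A\wedge\psi$, with structure group abelian so that all brackets drop out, becomes a \emph{linear} equation: $\ast d\phi = (F_{A_0} + da)\wedge\psi$, i.e. $-d\phi + \ast(da\wedge\psi) = \ast(F_{A_0}\wedge\psi)$ after applying $\ast$. Pairing this with the equation $d^*a = 0$ (a gauge-fixing choice we are free to impose), the left-hand side is precisely $L(\phi, a)$ up to the bookkeeping of signs in the definition $L(f,a)=(-d^*a, -df+\ast(da\wedge\psi))$. So I would check that the right-hand side $\bigl(0,\ast(F_{A_0}\wedge\psi)\bigr)$ lies in the target $\Omega_0^0(X)\oplus\Omega^1(X)$ — the first slot is literally $0$, so this is automatic — and then invoke surjectivity of $L$ to solve for $(\phi,a)$. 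Elliptic regularity upgrades the $L^{2,1}$ solution to a smooth one.

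\textbf{The coclosed case.} Assume now $\varphi$ is coclosed, i.e. $d\psi=0$. The first claim is that the monopole just constructed is in fact a $G_2$-instanton, meaning $\Phi$ (here $\phi$) is parallel, hence constant since $\nabla_A = d$ on the trivialization of the abelian bundle, and then the monopole equation collapses to $F_A\wedge\psi=0$. This is exactly the argument already spelled out in the first item of Remark \ref{rem:NoMonopoles}: from $d\psi=0$, the Bianchi identity, and the monopole equation one gets $\Delta_A\phi = 0$, whence (on the closed manifold $X$) $\phi$ is $d_A$-parallel; but more is true in the abelian setting — $\nabla_A\phi=0$ forces $\phi$ to be a constant function, and then $\ast\nabla_A\Phi=0$ so the monopole equation reads $F_A\wedge\psi=0$, i.e. $A$ is a $G_2$-instanton. (One should note the constant $\phi$ may be taken to be $0$ without loss of generality, or simply observe that the pair $(0,A)$ is again a monopole/instanton; either way the instanton $A$ exists on $L$.)

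\textbf{Uniqueness.} For uniqueness, suppose $(\phi_1, A_1)$ and $(\phi_2, A_2)$ are two monopoles on $L$; by the previous paragraph both $\phi_i$ are constant and both $A_i$ satisfy $F_{A_i}\wedge\psi=0$. Writing $A_2 = A_1 + a$ with $a\in\Omega^1(X)$ (abelian bundle, so the difference of connections is a genuine global $1$-form), the instanton condition gives $da\wedge\psi=0$, and after a gauge transformation we may assume $d^*a=0$. Then $a$ is a coclosed $1$-form with $d(a\wedge\psi)=0$ — note $d(a\wedge\psi) = da\wedge\psi - a\wedge d\psi = 0$ using $d\psi=0$ and $da\wedge\psi=0$ — so by the standing hypothesis $a\equiv 0$, i.e. $A_1$ and $A_2$ are gauge-equivalent. (If one wants uniqueness of the full pair including $\phi$, observe that the difference of the two constants is pinned down, or acknowledged as the harmless ambiguity of an additive constant; the honest statement is uniqueness up to gauge of the instanton, which is what the proposition is after.) The step I expect to require the most care is not any single estimate but rather keeping the identifications straight: matching the signs and the gauge-fixing slot of $L$ to the linearized monopole equation, and being careful that "monopole on $L$" is interpreted with the understanding that $\Phi$ is a section of the adjoint bundle, which for an abelian $L$ is a trivial $\mathbb{R}$ (or $i\mathbb{R}$) line bundle, so that "$\phi$ constant" and "$\nabla_A\phi=0$" genuinely coincide. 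Everything else is a direct application of Lemma \ref{lem:DiracSurjective} and Remark \ref{rem:NoMonopoles}.
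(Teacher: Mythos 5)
Your existence argument is exactly the paper's: fix a reference connection $A_0$, write the (abelian, hence linear) monopole equation together with the Coulomb condition as $L(\phi,a)=(0,-\ast(F_{A_0}\wedge\psi))$ (your sign in the second slot is immaterial), observe the first slot has zero average, and invoke the surjectivity part of lemma \ref{lem:DiracSurjective}; likewise the promotion to a $G_2$-instanton in the coclosed case is the paper's appeal to the argument of remark \ref{rem:NoMonopoles}, with the correct extra observation that for a line bundle $\nabla_A\phi=0$ means $\phi$ is an honest constant. The only place you diverge is uniqueness: the paper notes that for coclosed $\varphi$ the operator $L$ is formally self-adjoint, so the hypothesis makes its restriction to $\Omega_0^0(X)\oplus\Omega^1(X)$ injective and hence an isomorphism from $L^{2,1}$ to $L^2$, giving uniqueness of the gauge-fixed solution; you instead take two monopoles, use the first part to see both connections are instantons, put their difference $a$ in Coulomb gauge by an exact gauge transformation (solvable since $\int_X d^*a\,\dvol=0$), and feed the resulting coclosed $1$-form satisfying $d(a\wedge\psi)=da\wedge\psi=0$ into the standing hypothesis to conclude $a=0$. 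The mechanism is the same — the hypothesis is precisely the vanishing of the relevant kernel — but your version is more explicit about what ``unique'' means (uniqueness of the instanton up to gauge, with the additive constant in $\phi$ as the only residual ambiguity), which is a small gain in precision over the paper's operator-theoretic phrasing; I see no gaps.
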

\begin{proof}
To prove this we start with any connection $A_0$ on $L$ and look for $(\phi, a) \in \Omega^0 (X) \oplus \Omega^1(X)$ such that $(\phi, A_0 + a)$ solves the monopole equation $d \phi = \ast( F_{A_0+a} \wedge \psi)$. This can be rewritten in the form
$$-d \phi + \ast( d_{A_0} a \wedge \psi) = - \ast (F_{A_0} \wedge \psi),$$
and so, together with the gauge fixing condition $-d_{A_0}^* a=0$ it suffices to solve the equation $L(\phi, a)= (0, -\ast (F_{A_0} \wedge \psi))$. Since $0$ certainly has vanishing average, by lemma \ref{lem:DiracSurjective} this right hand side lies in the image of the operator $L$ and we can find $(\phi , a)$ such that $(\phi, A_0 + a)$ is a monopole on $L$.\\
The fact that in the coclosed case the monopoles are actually instantons follows from the discussion in remark \ref{rem:NoMonopoles}. The uniqueness follows from the fact that in this case the operator $L$ is formally self-adjoint. Then, the fact that when restricted to $\Omega_0^0 (X) \oplus \Omega^1(X)$ it has no kernel shows it is actually an isomorphism from $L^{2,1}$ to $L^2$.
\end{proof}

As a particular example of how to apply the previous result we shall now consider the strictly nearly parallel case.

\begin{corollary}\label{cor:G2InstLineBundle}
Let $(X,\varphi)$ be a strictly nearly parallel $G_2$-manifold. Then, for any $\alpha \in H^2(X, \mathbb{Z})$, there is a unique $G_2$-instanton on the complex line bundle $L$ with $c_1(L)= \alpha$.
\end{corollary}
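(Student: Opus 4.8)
The plan is to deduce Corollary \ref{cor:G2InstLineBundle} directly from Proposition \ref{prop:MonLineBundle} together with the representation-theoretic structure of a strictly nearly parallel $G_2$-structure. Proposition \ref{prop:MonLineBundle} already gives, for any complex line bundle $L$, a unique $G_2$-instanton provided $(X,\varphi)$ admits no nonzero coclosed $1$-form $b$ with $d(b\wedge\psi)=0$; so the whole task reduces to verifying this vanishing hypothesis in the strictly nearly parallel case. Since every class $\alpha\in H^2(X,\mathbb{Z})$ is realized as $c_1(L)$ for a unique (up to isomorphism) complex line bundle $L$, the statement then follows verbatim.

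First I would recall that on a strictly nearly parallel $G_2$-manifold one has $d\psi=0$ and $d\varphi=\lambda\psi$ with $\lambda\neq 0$ (equation \ref{eq:StrictlyNearlyParallel}). Let $b$ be a coclosed $1$-form with $d(b\wedge\psi)=0$. Expanding, $d(b\wedge\psi)=db\wedge\psi - b\wedge d\psi = db\wedge\psi$, so the hypothesis says $db\wedge\psi=0$, i.e. $\pi_7(db)=0$, so $db\in\Omega^2_{14}(X)$. Now I would use the Hodge identity: the map $\omega\mapsto \ast(\omega\wedge\psi)$ on $2$-forms acts as $+2$ on $\Lambda^2_7$ and as $-1$ on $\Lambda^2_{14}$, hence on $db\in\Lambda^2_{14}$ we get $\ast(db\wedge\varphi)=-db$ (using $\ast\psi=\varphi$ suitably), equivalently $\ast(db\wedge\psi) = -db$ — but more to the point, wedging $db\wedge\psi=0$ with nothing else, I instead pair it differently: compute $\|db\|_{L^2}^2$. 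Since $b$ is coclosed, $d^\ast b = 0$; and using $d\varphi=\lambda\psi$ and $d\psi=0$ I would integrate $\langle db, db\rangle$ by parts to relate it to $\int_X b\wedge db\wedge\varphi$ type terms. The cleanest route: consider $\Delta b$. On a $1$-form, $\Delta b = dd^\ast b + d^\ast d b = d^\ast d b$. One computes $d^\ast(db) = -\ast d\ast db$; using $db\in\Lambda^2_{14}$, $\ast db = -db\wedge\varphi$ (the eigenvalue relation for $\Lambda^2_{14}$ under $\omega\mapsto \omega\wedge\varphi$ followed by $\ast$), so $\ast d\ast db = -\ast d(db\wedge\varphi) = -\ast(\lambda\, db\wedge\psi) = 0$ by the hypothesis $db\wedge\psi=0$. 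Hence $b$ is harmonic.

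The final step is to show a harmonic $1$-form on a strictly nearly parallel $G_2$-manifold vanishes. This follows from positivity of scalar curvature: a nearly parallel $G_2$-metric is Einstein with positive scalar curvature (stated in the preliminaries), so by Bochner's theorem $b^1(X)=0$, whence $b=0$. I would spell this out: the Weitzenböck formula $\Delta b = \nabla^\ast\nabla b + \Ric(b)$ with $\Ric>0$ forces $\nabla b = 0$ and $\Ric(b)=0$, hence $b=0$. Thus the hypothesis of Proposition \ref{prop:MonLineBundle} holds, and combined with the remark that $\psi$ exact gives $d\psi=0$ (so $\varphi$ is coclosed, meeting the second hypothesis of the proposition), we obtain a unique $G_2$-instanton on each $L$.

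I expect the only genuinely delicate point to be getting the $G_2$-representation-theoretic eigenvalue identities exactly right (the signs and constants for the action of $\omega\mapsto\ast(\omega\wedge\varphi)$ and $\omega\mapsto\ast(\omega\wedge\psi)$ on $\Lambda^2_7$ versus $\Lambda^2_{14}$, and the relation $\ast\varphi=\psi$); these are standard but easy to misstate. Once the hypothesis $db\wedge\psi=0$ is translated into "$db$ is of type $14$", the argument that $b$ is harmonic and then the Bochner vanishing are routine. An alternative, perhaps cleaner, packaging avoids the pointwise representation theory entirely: integrate $\langle (g,b), LL^\ast(g,b)\rangle$ exactly as in the proof of Lemma \ref{lem:DiracSurjective} with $g=0$, which already shows any $b$ in the relevant space satisfies $d^\ast b = 0$ and $d(b\wedge\psi)=0$ simultaneously, and then only the harmonicity-plus-Bochner step remains — so really the single substantive ingredient beyond the cited results is the positivity of the Ricci curvature for nearly parallel $G_2$-metrics.
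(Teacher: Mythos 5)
Your proposal is correct and follows essentially the same route as the paper: translate the hypothesis of Proposition \ref{prop:MonLineBundle} into $db\wedge\psi=0$, deduce $d^*db=0$ using $d\varphi=\lambda\psi$ (your eigenvalue identity for $\Lambda^2_{14}$ is just a repackaging of the paper's manipulation with the $\Lambda^2_7$-projection), conclude $b$ is harmonic, and kill it by the B\"ochner argument with $\Ric>0$. The only blemish is the aside ``equivalently $\ast(db\wedge\psi)=-db$'', which is a type mismatch (a $1$-form equated with a $2$-form), but you discard it immediately and it plays no role in the argument.
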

\begin{proof}
We start by showing that in the strictly nearly parallel case we are in the setup of proposition \ref{prop:MonLineBundle}. Suppose $b \in \Omega^1(X)$ is such that $d^* b=0$ and $d(b \wedge \psi)=0$. First notice that in this case $\psi$ is exact and so closed, so the second equation can be written $d b \wedge \psi=0$. This shows that $3 d^7 b = \ast ( \ast (db \wedge \psi) \wedge \psi )=0$, which we can rewrite as $0 = 3d^7 b = d b + \ast (db \wedge \varphi)$. Hence, taking $d^*$ of this equation, we find
$$0=3 d^* d^7 b = d^* db + \ast (db \wedge d \varphi) = d^* db + \lambda \ast (db \wedge \psi) = d^* db,$$
where we have used that $d \varphi = \lambda \psi$ and $db \wedge \psi=0$ by hypothesis. Putting this together with $d^* b=0$, we conclude that $\Delta b=0$ and so $b$ is a harmonic $1$-form. However, strictly nearly parallel $G_2$-structures are Einstein with positive constant, and so have positive Ricci curvature. It then follows from the B\"ochner formula and Myers theorem that $b=0$. We are then in position to apply proposition \ref{prop:MonLineBundle} and conclude that there is a $G_2$-instanton on any line bundle over $X$. 
\end{proof}

\begin{remark}
\begin{enumerate}
\item One may wonder if the previous corollary extends from nearly parallel to a more general class of $G_2$-structures. We will see in the second bullet of the first item of theorem \ref{thm:X11} examples of coclosed $G_2$-structures where we do not have uniqueness of Abelian $G_2$-instantons. See also the second item in remark \ref{rem:X11}.

\item The previous proof works equally well for torsion free, irreducible $G_2$-manifolds, i.e. those with holonomy equal to $G_2$. In that case $\lambda=0$ and $\Ric=0$, but the irreducibility shows that there are no harmonic $1$-forms. 

\item In fact, the previous corollary has the following consequence. Any harmonic $2$-form on a strictly nearly parallel $G_2$-manifold, must lie on $\Lambda^2_{14}$. As proved by Lorenzo Foscolo, a similar result holds for nearly K\"ahler manifolds, see theorem 3.23 in \cite{Foscolo2016}. 
\end{enumerate}
\end{remark}

\subsubsection{$S^1$-invariant $G_2$-instantons}\label{ss:S1_Invariant_G2_Instantons}

In section \ref{sec:Aloff-Wallach} we will be interested in studying $G_2$-instantons that are invariant under the action of a group which acts transitively. Here we make a detour into $U(1)$-invariant $G_2$-instantons, on $U(1)$-invariant $G_2$-structures. We include this section so we can refer to its main computation in the proof of theorem \ref{thm:AbG2instYM}. Let $V$ be the infinitesimal generator of a $U(1)$-action preserving a coclosed $G_2$-structure, i.e. $L_V \varphi=0$ and so $L_V \psi=0$ as well. Now let $\eta \in \Omega^1(X^7)$ be the unique connection form on the circle bundle $X^7 \rightarrow M^6=X^7 / S^1$, such that $\eta(V)=1$ and $\eta \vert_{V^{\perp}} =0$. Then, the equation $L_V \psi=0$, together with $d \psi=0$ shows that both, $\iota_V \psi$ and $\psi- \eta \wedge \iota_V \psi$ are $V$-basic, and so are pulled back from $M^6$. We may then write
$$\psi= - \eta \wedge \Omega_1+ \tau,$$
where $\Omega_1$ and $\tau$ are $-\iota_V \psi$ and $\psi - \eta \wedge \iota_V \psi$ respectively. Moreover, the equations $L_V \psi=0$, and $d \psi=0$ further imply
$$d \Omega_1=0, \ \ d\eta \wedge \Omega_1= d \tau .$$
In fact, since $\psi= \ast \varphi$ is the $4$-form associated with the $G_2$-structure $\varphi$, there must further exist $V$-semibasic forms $\omega \in \Omega^2(X)$ and $\Omega_2 \in \Omega^3(X)$, such that $\varphi= \eta \wedge \omega + \Omega_2$ and $\tau = \frac{\omega^2}{2}$. In the setting we will be interested in, all the relevant principal bundles $P$ over $X$ can actually be regarded as bundles pulled back from $M$. Hence, if $A$ is a connection on $P$ over $X$ and $a'$ a connection pulled back from $M$ to $X$, we have that $A-a' \in \Omega^0(X, \Lambda^1 \otimes \mathfrak{g}_P)$. Then, splitting $\Lambda^1 = \langle \eta \rangle \oplus \langle \eta \rangle^{\perp}$ we can write $A-a'=a'' + \phi \otimes \eta$, where $a'' \in \Omega^0(X, \langle \eta \rangle^{\perp} \otimes \mathfrak{g}_P)$ and $\phi \in \Omega^0(X, \mathfrak{g}_P)$. Defining now $a=a'+a''$, the connection $A$ may written as $A=a+\phi \otimes \eta$. Its curvature may then be computed to be $F_A = F_a + d_a \phi \wedge \eta + \phi \otimes d \eta$, and $F_a = F_a^{\perp} -L_V a \wedge \eta$ with $F_a^{\perp}$ semibasic. However, as the connection is assumed to be invariant under the action generated by $V$, $L_V a=0$ and $F_a=F_a^{\perp}$ is actually $V$-basic. We then compute 
\begin{eqnarray}\nonumber
F_A \wedge \psi & = & ( F_a + d_a \phi \wedge \eta + \phi \otimes d \eta) \wedge (- \eta \wedge \Omega_1+ \tau) \\ \nonumber
& = & - \eta \wedge ( F_a \wedge \Omega_1 + \phi \otimes d \eta \wedge \Omega_1 + d_a \phi \wedge \tau  ) + (F_a + \phi \otimes d \eta ) \wedge \tau ,
\end{eqnarray} 
and so the $G_2$-instanton equation amounts to
\begin{equation}\label{eq:U(1)_Invariant_G2_Instanton}
 (F_a + \phi \otimes d \eta ) \wedge \Omega_1  + d_a \phi \wedge \tau  =0 \ \ (F_a + \phi \otimes d \eta ) \wedge \tau = 0 .
\end{equation}

\subsection{Examples from tri-Sasakian geometry}\label{sec:NearlyParallel}

We start this subsection with a brief discussion of tri-Sasakian geometry, following the nice review paper \cite{Boyer1999}. Then, starting from a tri-Sasakian manifold we construct a family of coclosed $G_2$-structures containing a strictly nearly parallel structure, and give some existence results for $G_2$-instantons, see propositions \ref{prop:3_Parameter_Family}, \ref{prop:G2InstFromQK}, and \ref{prop:G2InstFromNK}.\\
A tri-Sasakian $7$-manifold may be equivalently defined as a Riemannian $7$-manifold $(X^7, g_7)$ equipped with a $3$-orthonormal vector fields $\lbrace \xi_i \rbrace_{i=1}^3$ satisfying $[\xi_i, \xi_j] = \epsilon_{ijk} \xi_k$. Any tri-Sasakian $X$ is quasi-regular in the sense that the vector fields $\lbrace \xi_i \rbrace_{i=1}^3$ generate a locally free $SU(2)$ action. The space of leafs $Z^4$, equipped with the Riemannian metric $g_Z$ such that $\pi:X^7 \rightarrow Z^4$ is an orbifold Riemannian submersion, has the structure of an anti-self-dual, Einstein orbifold with scalar curvature $s>0$. Let $g_7$ be the tri-Sasakian metric on $X^7$ and regard $\pi: X^7 \rightarrow Z^4$ as an $SU(2)$, or $SO(3)$ (orbi) bundle of frames of $\Lambda^2_+ Z^4$. The Levi-Civita connection of $Z^4$ equips it with a connection $\eta=\eta_i \otimes T_i \in \Omega^1(X^7, \mathfrak{so}(3))$, where the $T_i$ are a standard basis of $\mathfrak{so}(3)$ satisfying $[T_i,T_j]=2\epsilon_{ijk}T_k$. This has the property that the $\eta$-horizontal forms $\omega_i$ defined by
$$F_{\eta}= d\eta + \frac{1}{2}[\eta \wedge \eta] =  \frac{s}{24} \  \omega_i \otimes T_i,$$
form an orthogonal basis of $(\Lambda^2_+ \ker (\eta), g_7 \vert_{\ker (\eta)})$ with $\vert \omega_i \vert = \sqrt{2}$ and $s \in \mathbb{R}^+$. We further remark that the metric $g_7$ can be written as
$$g_7 = \eta^i \otimes \eta^i + \pi^* g_Z .$$

\begin{remark}\label{rem:TriSasakian_Sp(2)}
To make a connection with the holonomy point of view used in definition \ref{def:NearlyParallel} we remark that the 2-forms $\overline{\omega}_i = r dr \wedge \eta_i + \frac{r^2}{2} d \eta_i$ equip the cone $(\mathbb{R}^+_r \times X, g_C = dr^2 + r^2 g_7)$ with a compatible, torsion free $Sp(2)$ structure.
\end{remark}

The strictly nearly parallel $G_2$-structure $\varphi$ constructed in \cite{Friedrich1997}, determines a Riemannian metric $g_{\varphi}$ which is a squash of the tri-Sasakian metric $g_7$. We shall consider the $1$-parameter family of $G_2$-structures $\lbrace \varphi_t \rbrace_{t \in \mathbb{R} \backslash 0}$ such that
\begin{equation}\label{eq:G2_structure_t}
\varphi_t = t^3 \eta_1 \wedge \eta_2 \wedge \eta_3 + t \frac{s}{48} \left( \eta_1 \wedge \omega_1 +  \eta_2 \wedge \omega_2 +  \eta_3 \wedge \omega_3 \right), 
\end{equation}
which determines $g_{\varphi_t}= t^2 (\eta_1^2 + \eta_2^2 + \eta_3^2)+ \pi^* g_Z$ and
$$\psi_t =  \frac{1}{6} \left(\frac{s}{48}\right)^2 \omega_i \wedge \omega_i + t^2 \frac{s}{48} \left( \eta_1 \wedge \eta_2 \wedge \omega_3 + \eta_2 \wedge \eta_3 \wedge \omega_1 + \eta_3 \wedge \eta_1 \wedge \omega_2 \right).$$
Recall that, up to scaling, the condition that $\varphi_t$ be nearly parallel can be written as $d \varphi_t = \lambda \psi_t$ for some constant $\lambda>0$. In our case we can easily compute from $\frac{s}{24} \omega_i = d\eta^i + \epsilon_{ijk} \eta^j \wedge \eta^k$ that
\begin{eqnarray}\nonumber
d \varphi_t & = & t (t^2+1) \frac{s}{24} \left( \eta_1 \wedge \eta_2 \wedge \omega_3 +  \eta_2 \wedge \eta_3 \wedge \omega_1 + \eta_3 \wedge \eta_1 \wedge \omega_2 \right) \\ \nonumber
& &  + 2 t \left(\frac{s}{48} \right)^2 \left( \omega_1 \wedge \omega_1 +  \omega_2 \wedge \omega_2 + \omega_3 \wedge \omega_3 \right).
\end{eqnarray}
Then, the equation $d \varphi_t = \lambda \psi_t$ becomes $12 t = \lambda$ and $t^2 + 1 = 2 \lambda t$, which has the solutions $t= 1/ \sqrt{5}$, $\lambda=12 / \sqrt{5}$ and $t= -1/ \sqrt{5}$, $\lambda=-12/ \sqrt{5}$. Note that we can scale $\lambda$ by scaling the metric and change the sign of $\lambda$ by changing the orientation. Conversley, it is possible to show that given a positive Einstein, anti-self-dual orbifold $(Z,g_Z)$ there is an $SO(3)$, or $SU(2)$ bundle $\pi: X^7 \rightarrow Z$ equipped with a tri-Sasakian structure, \cite{Boyer1999}, and so has a strictly nearly parallel $G_2$-structure as above. We further remark that this converse statement may however produce non-smooth $X^7$. We are now in position to give some examples of $G_2$-instantons, starting first with $SU(2)$-invariant instantons and then with $S^1$-invariant examples.

\begin{proposition}\label{prop:3_Parameter_Family}
For any $b_1,b_2,b_3 \in \mathbb{R}$ the $1$-form $\eta=b_1 \eta_1 + b_2 \eta_2 + b_3 \eta_3$ equips the trivial complex line bundle over $X^7$ with a $G_2$-instanton with respect to $\varphi_{\sqrt{s/48}}$.\\ Moreover, if $L$ is a complex line bundle over $X^7$ admitting a $G_2$-instanton with respect to $\varphi_{\sqrt{s/48}}$, then $L$ actually has a real $3$-parameter family of $G_2$-instantons.
\end{proposition}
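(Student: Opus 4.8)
The plan is to exploit that on a complex line bundle the $G_2$-instanton equation $F_A\wedge\psi=0$ is abelian, hence affine-linear in the connection: any two connections on a line bundle $L$ differ by an imaginary $1$-form $i\alpha$, and their curvatures differ by $i\,d\alpha$. Thus, for a fixed coclosed $\varphi$, the set of $G_2$-instanton connections on $L$ is either empty or a coset of the linear space $\{\alpha\in\Omega^1(X^7) : d\alpha\wedge\psi=0\}$. Both assertions then follow from the single claim that, for $\varphi_{\sqrt{s/48}}$, the $1$-form $i\eta$ with $\eta=b_1\eta_1+b_2\eta_2+b_3\eta_3$ lies in this space, i.e.\ $d\eta\wedge\psi_{\sqrt{s/48}}=0$ for every $(b_1,b_2,b_3)\in\mathbb{R}^3$.

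To prove the claim, observe that $d\eta\wedge\psi_t=\sum_i b_i\,d\eta_i\wedge\psi_t$ depends linearly on $b$, so it suffices to handle $\eta=\eta_i$ for each $i$ separately. Moreover the $\SO(3)$ that rotates the indices of $(\eta_1,\eta_2,\eta_3)$ — and, compatibly by the structure equations $\tfrac{s}{24}\omega_i=d\eta_i+\epsilon_{ijk}\eta_j\wedge\eta_k$, the indices of $(\omega_1,\omega_2,\omega_3)$ — fixes both $\varphi_t$ and $\psi_t$ (these involve only the invariant combinations $\eta_1\wedge\eta_2\wedge\eta_3$, $\sum_i\eta_i\wedge\omega_i$, $\sum_i\omega_i\wedge\omega_i$, and $\sum_i\epsilon_{ijk}\eta_i\wedge\eta_j\wedge\omega_k$), so the three cases are equivalent and we may take $\eta=\eta_1$, whence $d\eta_1=\tfrac{s}{24}\,\omega_1-2\,\eta_2\wedge\eta_3$. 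Wedging with $\psi_t$ and sorting each term by its degree in the vertical directions $\langle\eta_1,\eta_2,\eta_3\rangle$ against its horizontal degree, every product vanishes for bidegree reasons — there being only three vertical and four horizontal directions — save one term of vertical degree $2$ and horizontal degree $4$; evaluating it with $\omega_i\wedge\omega_j=2\delta_{ij}\,\pi^*\dvol_Z$ (valid since the $\omega_i$ are an orthogonal basis of $\Lambda^2_+\ker\eta$ of norm $\sqrt2$), one finds $d\eta_1\wedge\psi_t$ equals a fixed nonzero multiple of $\eta_2\wedge\eta_3\wedge\pi^*\dvol_Z$ whose coefficient vanishes precisely when $t=\sqrt{s/48}$.

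Granting the claim, the first assertion is immediate: for each $b\in\mathbb{R}^3$ the connection on the trivial complex line bundle with connection $1$-form $i\eta$ has curvature $i\,d\eta$, so it is a $G_2$-instanton with respect to $\varphi_{\sqrt{s/48}}$. For the second, if $L$ carries a $G_2$-instanton $A_0$ with respect to $\varphi_{\sqrt{s/48}}$, then for each $b\in\mathbb{R}^3$ the connection $A_0+i\eta$ has curvature $F_{A_0}+i\,d\eta$ and hence is again a $G_2$-instanton on $L$. Distinct $b$ give gauge-inequivalent connections: the difference of two members of this family is $i\sum_j c_j\eta_j$, and if it were the gauge of a map $X^7\to\U(1)$ then $\sum_j c_j\eta_j$ would be closed, whereas $d\bigl(\sum_j c_j\eta_j\bigr)$ has horizontal part $\tfrac{s}{24}\sum_j c_j\omega_j$, which is nonzero for $c\neq0$ since the $\omega_i$ are pointwise linearly independent. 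Hence $L$ supports a genuine real $3$-parameter family of $G_2$-instantons. The only laborious step in all of this is the sign and bidegree bookkeeping of the wedge products in the second paragraph — and checking there that the surviving coefficient indeed has its zero at $t=\sqrt{s/48}$, which amounts to pinning down the normalisations of the $\omega_i$ and of $\dvol_Z$.
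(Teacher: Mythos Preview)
Your proof is correct and follows essentially the same route as the paper: reduce by linearity to checking $d\eta_i\wedge\psi_t=0$ for each $i$, compute $d\eta_1\wedge\psi_t$ using the structure equation $\tfrac{s}{24}\omega_i=d\eta_i+\epsilon_{ijk}\eta_j\wedge\eta_k$ and the relations $\omega_i\wedge\omega_j=0$ for $i\neq j$, observe that a single term survives and vanishes exactly at $t=\sqrt{s/48}$, and then invoke the affine structure of the abelian instanton equation for the second assertion. Your explicit $\SO(3)$-symmetry reduction and your gauge-inequivalence argument are additions the paper does not spell out (the paper simply says ``similarly'' for $i=2,3$ and does not address gauge equivalence), but they are correct and welcome.
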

\begin{proof}
The connection $\eta=b_1 \eta_1 + b_2 \eta_2 + b_3 \eta_3$ is not only $S^1$-invariant but also $SU(2)$-invariant. Its curvature is $d \eta$ and to show that $d \eta \wedge \psi_{s/48}=0$ it is enough to show that $d \eta_1 \wedge \psi_{s/48}=0,$ the $d\eta_2$ and $d \eta_3$ equations are dealt with similarly. So we compute
\begin{eqnarray}\nonumber
d \eta_1 \wedge \psi_{t} & = & (\omega_1 - 2 \eta_{23}) \wedge \left( \frac{1}{6} \left(\frac{s}{48}\right)^2 \omega_i \wedge \omega_i + t^2 \frac{s}{48} \left( \eta_{23} \wedge \omega_1 +  \ldots \right) \right) \\ \nonumber
& = & \left( - \left( \frac{s}{48} \right)^2 + t^2  \frac{s}{48}  \right) \eta_{23} \wedge \omega_1 \wedge \omega_1,
\end{eqnarray}
which vanishes if and only if $t= \sqrt{s/48}$.\\
The second part of the theorem follows immediately from the fact that the $G_2$-instanton equation is linear in the Abelian case. 
\end{proof}

\begin{proposition}\label{prop:G2InstFromQK}
Let $A$ be an anti-self-dual connection on a bundle over a positive, anti-self-dual, Einstein, orbifold $(Z, g_Z)$. Then, for all $t>0$ the $G_2$-structure $\varphi_t$ is coclosed and
\begin{itemize}
\item $\pi^* A$ is a $G_2$-instanton on $X^7$ with respect to $\varphi_t$. In particular, $\pi^* A$ is a $G_2$-instanton for the strictly nearly parallel $G_2$-structure $\varphi_{1/\sqrt{5}}$.
\item $\pi^* A$ is Yang-Mills with respect to $\varphi_t$.
\end{itemize}
\end{proposition}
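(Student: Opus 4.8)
The plan is to verify the two bullet points for the pulled-back connection $\pi^*A$ by direct computation using the explicit forms for $\varphi_t$ and $\psi_t$ given in \eqref{eq:G2_structure_t}.

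First I would establish that $\varphi_t$ is coclosed for all $t$. From the expression for $\psi_t$, the only potentially non-closed pieces are the terms $t^2 \frac{s}{48}(\eta_1 \wedge \eta_2 \wedge \omega_3 + \cdots)$, since $\omega_i \wedge \omega_i$ is horizontal and $d\omega_i$ can be controlled via $d\eta^i$ and the structure equation $\frac{s}{24}\omega_i = d\eta^i + \epsilon_{ijk}\eta^j \wedge \eta^k$. A short calculation, essentially the same bookkeeping already carried out in the text for $d\varphi_t$, shows $d\psi_t = 0$ for every $t$; this uses the Bianchi-type identity $d\omega_i = \epsilon_{ijk}\eta^j \wedge \omega_k$ (equivalently that the $\omega_i$ are the self-dual curvature forms and $\eta$ has curvature valued in them).

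Next, for the instanton property I would use the $G_2$-instanton equation in the form $F_{\pi^*A} \wedge \psi_t = 0$, noting $F_{\pi^*A} = \pi^*F_A$ is horizontal (basic) and \emph{anti-self-dual} on $Z^4$. The key point is that wedging a horizontal anti-self-dual $2$-form against $\psi_t$: the term $\frac{1}{6}(\frac{s}{48})^2 \omega_i \wedge \omega_i$ contributes $\pi^*F_A \wedge \omega_i \wedge \omega_i$, and on the $4$-manifold $Z$ the product of an anti-self-dual form with a self-dual form $\omega_i$ is traceless in a way that makes $F_A \wedge \omega_i = 0$ (as $2$-forms on $Z$, $F_A \in \Lambda^2_-$ while $\omega_i$ spans $\Lambda^2_+$, so $F_A \wedge \omega_i = \langle F_A, \omega_i\rangle \dvol_Z = 0$). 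The term $t^2\frac{s}{48}(\eta_1 \wedge \eta_2 \wedge \omega_3 + \cdots)$ wedged with $\pi^*F_A$ gives $\eta_1 \wedge \eta_2 \wedge (F_A \wedge \omega_3) + \cdots = 0$ for the same reason. Hence $F_{\pi^*A}\wedge\psi_t = 0$ identically in $t$, and in particular for $t = 1/\sqrt{5}$ this is the strictly nearly parallel structure $\varphi_{1/\sqrt{5}}$.

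For the Yang--Mills bullet, the cleanest route is to observe that once we know $\pi^*A$ is a $G_2$-instanton with respect to the coclosed $\varphi_t$, we have $\ast F_{\pi^*A} = -F_{\pi^*A}\wedge\varphi_t$, so $d_{\pi^*A}\ast F_{\pi^*A} = -d_{\pi^*A}(F_{\pi^*A}\wedge\varphi_t) = \pm F_{\pi^*A}\wedge d\varphi_t$ by the Bianchi identity. Now from the computation of $d\varphi_t$ reproduced in the text, $d\varphi_t$ is a combination of the terms $\eta_i \wedge \eta_j \wedge \omega_k$ and $\omega_i \wedge \omega_i$, and wedging $\pi^*F_A$ against each of these vanishes by exactly the anti-self-duality argument used above (every summand contains some $F_A \wedge \omega_k = 0$). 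Therefore $d_{\pi^*A}\ast F_{\pi^*A} = 0$, i.e.\ $\pi^*A$ is Yang--Mills. Alternatively, since $d\varphi_t = \lambda_t \psi_t + \ast\tau_3$ with $\tau_3 \in \Omega^3_{27}$ for coclosed structures, one could argue more conceptually, but the direct wedge computation is the most transparent. The main obstacle I anticipate is purely organizational: carefully tracking the horizontal/vertical type decomposition and signs so that each wedge product is correctly reduced to an expression containing a factor $F_A \wedge \omega_i$ on $Z$; no genuinely new idea beyond anti-self-duality on the base is needed.
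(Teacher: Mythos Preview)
Your proposal is correct and follows essentially the same approach as the paper: verify $d\psi_t=0$ directly from the structure equations, use that $F_A\in\Lambda^2_-$ on $Z$ implies $F_A\wedge\omega_i=0$ to conclude $\pi^*F_A\wedge\psi_t=0$, and for the Yang--Mills part compute $d_{\pi^*A}\ast F_{\pi^*A}=-F_{\pi^*A}\wedge d\varphi_t$ via Bianchi and observe that every summand of $d\varphi_t$ contains a factor $\omega_i$. The only minor slip is the formula you quote for $d\omega_i$ (the paper has $d\omega_i=2\epsilon_{ijk}\omega_j\wedge\eta_k$), but this does not affect the argument.
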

\begin{proof}
The fact that the $G_2$-structure $\varphi_t$ is coclosed for any $t>0$ follows from computing that $d \psi_t=0$. This follows easily from the fact that $\eta_1 \wedge \eta_2 \wedge \eta_3$ is closed (in fact exact) and that each $\omega_i \wedge \omega_i$ is closed as well, since $d \omega_i= 2 \epsilon_{ijk} \omega_j \wedge \eta_k$ and $\omega_i \wedge \omega_j=0$ for $i \neq j$. This shows that $\varphi_t$ is coclosed.\\
We start by proving the first bullet in the statement, i.e. that $A$ pulls back to a $G_2$-instanton, let $F_A$ denote the curvature of $A$, which is anti-self-dual by hypothesis. Hence, as $\pi$ is a Riemannian submersion with respect to all $g_{\varphi_t}$, $\pi^* F_A \wedge \omega_i=0$ for $i=1,2,3$, it is then easy to check that $\pi^* F_A \wedge \psi_t=0$.\\
Now we prove the second bullet in the proposition. To ease notation denote by $A$ the pullback of such a anti-self-dual connection. Then $F_A$ takes values in $\Lambda^2_- \otimes \mathfrak{g}_P$ and we compute
\begin{eqnarray}\label{eq:SelfDual_to_YM}
d_A \ast_{g_{\varphi_t}} F_A = - d_A (F_A \wedge \varphi_t) = - F_A \wedge d \varphi_t  .
\end{eqnarray}
However, $d \varphi_t = t^3 d (\eta_{123}) + t d\eta_i \wedge \omega_i + t \eta_i \wedge d \omega_i$ and it is easy to check that $d (\eta_{123})= \omega_1 \wedge \eta_{23}+ c. p. $ and $\eta_i \wedge d \omega_i = 2(\eta_{13} \wedge \omega_2 - \eta_{12} \wedge \omega_3) + c.p.$, where $c.p.$ denotes cyclic permutations. Putting all these together we have
$$d \varphi_t = t \omega_i \wedge \omega_i + (t^2-6t) \left( \omega_1 \wedge \eta_{23} + \omega_2 \wedge \eta_{31} + \omega_3 \wedge \eta_{12} \right). $$
As $F_A$ is anti-self-dual $F_A \wedge \omega_i=0$, hence inserting $d \varphi_t$ into the equation \ref{eq:SelfDual_to_YM} we conclude that $d_A \ast F_A =0$ and $A$ is Yang-Mills.
\end{proof}

\begin{remark}\label{rem:TriSasakian_G2}
\begin{enumerate}
\item Recall that the map from $G_2$-structures to the metrics associated with them is highly nonlinear and certainly not injective. For instance, the tri-Sasakian metric $g_7$, besides being associated with $\varphi_1$, is also associated with the $G_2$-structure
$$\varphi^{ts}= - \eta_{123} + \frac{s}{48} \left( \eta_1 \wedge \omega_1+  \eta_2 \wedge \omega_2+  \eta_3 \wedge \omega_3 \right) ,$$
whose associated $4$-form $\psi^{ts}= \ast_{g_7} \varphi^{ts}$ is given by
$$ \psi^{ts}= \frac{1}{2} \left( \frac{s}{48} \right)^2 \omega_1 \wedge \omega_1 - \frac{s}{48} \left( \omega_1 \wedge \eta_{23}+ \omega_2 \wedge \eta_{31}- \omega_3 \wedge \eta_{12} \right)$$
and satisfies $d \varphi^{ts}=4 \psi^{ts}$. This can be related with the torsion free $Sp(2) \subset Spin(7)$ structure $(\overline{\omega}_1, \overline{\omega}_2, \overline{\omega}_3)$ on the cone, introduced in remark \ref{rem:TriSasakian_Sp(2)}. The $4$-form 
$$\Theta= \frac{1}{2} \left( \overline{\omega}_1^2 + \overline{\omega}_2^2 - \overline{\omega}_3^2 \right),$$
on the cone, has stabilizer $Spin(7)$, is closed and can be written as $\Theta=r^3 dr \wedge \varphi^{ts} + r^4 \psi^{ts}$.

\item The same proof as that of proposition \ref{prop:G2InstFromQK} also yields $G_2$-instantons with respect to $\varphi^{ts}$ by pulling back anti-self-dual connections on $(Z,g_Z)$.

\item One may also consider the $G_2$-structures obtained by scaling differently each of the $\eta_i$, while keeping them orthonormal, i.e.
$$\varphi_{a,b,c}= abc \eta_1 \wedge \eta_2 \wedge \eta_3 + a \eta_1 \wedge \omega_1 + b \eta_2 \wedge \omega_2 + c \eta_3 \wedge \omega_3.$$
It is easy to check that any such $G_2$-structure is coclosed if and only if $a=b=c$.
\end{enumerate}
\end{remark}

We now change the point of view on $(X^7, g_7)$ equipped with its tri-Sasakian structure, and regard it as a Sasakian manifold with respect to any of the Reeb vector fields $\xi_q = q_1 \xi_1 + q_2 \xi_2 + q_3 \xi_3$, for a unit quaternion $q=q_1i + q_2j + q_3k \in \Im (\mathbb{H})$. In fact, the resulting Sasakian manifold is always quasi-regular and does not depend on $q$. Take $\xi=\xi_1$ for example, i.e. $(X^7, \xi_1, g_7)$, then the leaf space $(Y^6, \omega_{KE}= \frac{d \eta_1}{2})$ is a K\"ahler-Einstein Fano orbifold. In fact $Y^6$ is the twistor space associated with the quaternionic K\"ahler structure on $Z$. Moreover, $Y$ is smooth if and only if $Z$ is. In fact, the twistor space also comes equipped with a nearly K\"ahler structure, see \cite{Nagy2002}. The next result relates this nearly K\"ahler structure with the $G_2$-structure $\varphi_{1/ \sqrt{2}}$ on $X$. We came across it after a conversation with Mark Haskins, so it may be known to experts. However, we were unable to locate a reference

\begin{proposition}
Let $(X^7,g^7)$ be a tri-Sasakian manifold, then $(\iota_{\xi_1/t} \varphi_t, -\iota_{\xi_1/t} \psi_t)$ are basic with respect to $\xi_1$ and and equip the twistor space with a nearly-K\"ahler structure if and only if $t=\pm 1/\sqrt{2}$.
\end{proposition}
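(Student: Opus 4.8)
The plan is to reduce the $G_{2}$-data to an $\SU(3)$-structure on the twistor space $Y^{6}=X^{7}/\langle\xi_{1}\rangle$ and to test the nearly-K\"ahler equations there. First I would perform the contraction: using $\iota_{\xi_{1}}\eta_{1}=1$, $\iota_{\xi_{1}}\eta_{2}=\iota_{\xi_{1}}\eta_{3}=0$ and $\iota_{\xi_{1}}\omega_{i}=0$ (the $\omega_{i}$ are $\eta$-horizontal), the formulas for $\varphi_{t}$ and $\psi_{t}$ give
\[
\omega:=\iota_{\xi_{1}/t}\varphi_{t}=t^{2}\,\eta_{2}\wedge\eta_{3}+\tfrac{s}{48}\,\omega_{1},
\qquad
\sigma:=-\iota_{\xi_{1}/t}\psi_{t}=-t\tfrac{s}{48}\bigl(\eta_{2}\wedge\omega_{3}-\eta_{3}\wedge\omega_{2}\bigr),
\]
while writing $\varphi_{t}=(t\eta_{1})\wedge\omega+\rho$ isolates the third piece $\rho=t\tfrac{s}{48}(\eta_{2}\wedge\omega_{2}+\eta_{3}\wedge\omega_{3})$. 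Since $g_{\varphi_{t}}(\xi_{1},\xi_{1})=t^{2}$, the field $\xi_{1}/t$ is $g_{\varphi_{t}}$-unit, so $(\omega,\rho,\sigma)$ is automatically an $\SU(3)$-structure on $\xi_{1}^{\perp}$ whose underlying metric is the restriction of $g_{\varphi_{t}}$; non-degeneracy is explicit, $\omega^{3}$ being a nonzero multiple of $\eta_{23}\wedge\dvol_{Z}$ for $t\neq0$.

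Next I would check that these forms descend. Each of $\omega,\rho,\sigma$ is annihilated by $\iota_{\xi_{1}}$ (immediate from the formulas), and $L_{\xi_{1}}\varphi_{t}=L_{\xi_{1}}\psi_{t}=0$ because $\varphi_{t},\psi_{t}$ are built $\SU(2)$-equivariantly out of $\eta_{i},\omega_{i}$; Cartan's identity $L_{\xi_{1}}\iota_{\xi_{1}}=\iota_{\xi_{1}}L_{\xi_{1}}$ then forces $L_{\xi_{1}}\omega=L_{\xi_{1}}\rho=L_{\xi_{1}}\sigma=0$, so all three are $\xi_{1}$-basic and pass to $Y^{6}$. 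Note that this, together with the fact that $(\omega,\rho,\sigma)$ is a non-degenerate $\SU(3)$-structure, holds for every $t\neq0$; the whole content of the proposition therefore resides in the nearly-K\"ahler condition.

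For the latter I would use the characterization that an $\SU(3)$-structure $(\omega,\Omega)$, $\Omega=\Omega_{+}+i\Omega_{-}$, is nearly K\"ahler precisely when $d\omega=3\lambda\,\Omega_{+}$ and $d\Omega_{-}=-2\lambda\,\omega\wedge\omega$ for a single $\lambda\neq0$ (so that in particular $\Omega_{+}=\tfrac1{3\lambda}d\omega$ is closed), and I would differentiate the three forms with the tri-Sasakian structure equations $\tfrac{s}{24}\omega_{i}=d\eta_{i}+\epsilon_{ijk}\eta_{j}\wedge\eta_{k}$, $d\omega_{i}=2\epsilon_{ijk}\omega_{j}\wedge\eta_{k}$, $\omega_{i}\wedge\omega_{j}=2\delta_{ij}\dvol_{Z}$. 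One obtains $\sigma=\tfrac{ts}{96}\,d\omega_{1}$, so $d\sigma=0$; $d\eta_{23}=\tfrac{s}{48}\,d\omega_{1}$, so $d\omega=(1+t^{2})\tfrac{s}{48}\,d\omega_{1}=\tfrac{2(1+t^{2})}{t}\,\sigma$; and a short computation gives $d\rho=\tfrac{ts^{2}}{288}\,\dvol_{Z}+\tfrac{ts}{12}\,\eta_{23}\wedge\omega_{1}$, to be compared with $\omega\wedge\omega=\tfrac{s^{2}}{1152}\,\dvol_{Z}+\tfrac{t^{2}s}{24}\,\eta_{23}\wedge\omega_{1}$.

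Now I would conclude as follows. Since $\sigma$ is closed while $\rho$ is not, and $\{\sigma,\rho\}$ is a basis of the plane spanned by $\Omega_{+},\Omega_{-}$, the closed member $\Omega_{+}$ must be a multiple of $\sigma$ (consistent with $d\omega$ itself being a nonzero multiple of $\sigma$); the first nearly-K\"ahler equation then merely fixes $\lambda$, and the second becomes the requirement that $d\rho$ be proportional to $\omega\wedge\omega$. Comparing the coefficients of the two independent $4$-forms $\dvol_{Z}$ and $\eta_{23}\wedge\omega_{1}$ displayed above, this proportionality holds if and only if $4t=2/t$, i.e. $t^{2}=\tfrac12$. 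Conversely, at $t=\pm1/\sqrt2$ one has $d\omega=\tfrac3t\,\sigma$ and $d\rho=4t\,\omega\wedge\omega$, so after choosing the phase of $\Omega$ both equations hold with a common $\lambda$, and the remaining $\SU(3)$-compatibility and volume-normalization relations are automatic because $(\omega,\rho,\sigma)$ already arose from a bona fide $G_{2}$-structure; thus the descended structure is nearly K\"ahler exactly when $t=\pm1/\sqrt2$. The step I expect to be most delicate is precisely this last bookkeeping: matching, through the $G_{2}\to\SU(3)$ reduction, the closed form $\sigma$ and the form $\rho$ with $\Re\Omega$ and $\Im\Omega$ (together with the correct signs and normalizing constants), so that "nearly K\"ahler" genuinely reduces to the two displayed equations with a single $\lambda$; once that is pinned down, $t^{2}=\tfrac12$ simply falls out of the two-term coefficient match. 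As a consistency check one may note that this is the value for which $g_{\varphi_{t}}$ restricts on $Y^{6}$ to the Eells--Salamon metric, the unique member of the standard one-parameter family of metrics on the twistor space of $Z$ that admits a nearly-K\"ahler structure.
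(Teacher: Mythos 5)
Your proof is correct and follows essentially the same route as the paper: contract $\varphi_t$ and $\psi_t$ with the unit field $\xi_1/t$, express the resulting triple in the frame $\eta_i,\omega_i$, differentiate via the tri-Sasakian structure equations, and check that the two nearly-K\"ahler equations hold with a common $\lambda$ precisely when $t=\pm1/\sqrt2$. Your signs for $\omega$ and $\rho$ differ from the paper's displayed formulas but are the ones actually implied by the stated $\varphi_t,\psi_t$, and your extra care with basicness and the phase of $\Omega$ only makes the argument more complete, not different.
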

\begin{proof}
The forms $\omega= \iota_{\xi_1/t} \varphi_t$, $\Omega_1 = -\iota_{\xi_1/t} \psi_t$ and $\Omega_2 = \varphi_t - t \eta_1 \wedge \iota_{\xi_1/t} \varphi_t$ are all basic with respect to $\xi_1$ and so they are the pullback of forms on the twistor $Y$. We denote these by $(\omega, \Omega_1, \Omega_2)$ respectively, and we must check these equip $Y^6$ with a nearly K\"ahler structure. Back in $X^7$ these can be written as
$$\omega= t^2 \eta_{23} - \frac{s}{48} \omega_1  , \ \Omega_1 =  \frac{st}{48} \left( \eta_2 \wedge \omega_3 - \eta_3 \wedge \omega_2 \right) , \ \Omega_2 = -  \frac{st}{48} \left( \eta_2 \wedge \omega_2 + \eta_3 \wedge \omega_3 \right).$$
Then, we compute that $d\omega=-3 \lambda \Omega_1$ and $d \Omega_2= 2 \lambda \omega_1^2$ for some $\lambda$ if and only if $t= \pm 1/ \sqrt{2}$, in which case $\lambda= \mp \sqrt{2}$ and so $(\omega, \Omega_1)$ does equip $Y^6$ with a nearly K\"ahler structure.
\end{proof}

\begin{remark}\label{rem:G2fromNK}
In particular, using the notation introduced in the proof of the previous proposition, we can recover the $G_2$-structure $\varphi_t$ by
$$\varphi_t = t\eta_1 \wedge \omega + \Omega_2 , \ \ \psi_t = - t\eta_1 \wedge \Omega_1 + \frac{\omega^2}{2}.$$

\end{remark}

As a consequence, we have

\begin{proposition}\label{prop:G2InstFromNK}
Let $A$ be a pseudo Hermitian-Yang-Mills (pHYM) connection for the nearly K\"ahler structure $(\omega, \Omega_1)$ on $Y^6$. Then, its pullback is a $G_2$-instanton with respect to $\varphi_{1/ \sqrt{2}}$.
\end{proposition}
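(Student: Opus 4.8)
The plan is to reduce the $G_2$-instanton equation $F_{\pi^{*}A}\wedge\psi_{1/\sqrt2}=0$ directly to the two equations defining a pHYM connection, using the explicit shape of $\psi_{1/\sqrt 2}$ recorded in Remark \ref{rem:G2fromNK}. Recall from there that, taking $t=1/\sqrt2$,
\[
  \psi_{1/\sqrt2} \;=\; -\tfrac{1}{\sqrt2}\,\eta_1\wedge\Omega_1 \;+\; \tfrac12\,\omega^2 ,
\]
where $(\omega,\Omega_1)$ is the nearly K\"ahler $\SU(3)$-structure on the twistor space $Y^6$ (so $\omega$ is its fundamental $2$-form and $\Omega_1$ a constant multiple of the real part of its $(3,0)$-form), regarded via $\pi^{*}$ as forms on $X^{7}$, and $\eta_1$ is the $\xi_1$-connection $1$-form. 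Note also that by Proposition \ref{prop:G2InstFromQK} the structure $\varphi_{1/\sqrt2}$ is coclosed, so $\psi_{1/\sqrt2}$ is closed and this is indeed the relevant elliptic $G_2$-instanton equation.

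First I would spell out the pHYM condition: $A$ is pseudo Hermitian--Yang--Mills for $(\omega,\Omega_1)$ precisely when its curvature is a primitive $(1,1)$-form, i.e.
\[
  F_A\wedge\Omega_1=0 \qquad\text{and}\qquad F_A\wedge\omega^2=0 .
\]
(The first condition encodes that $F_A$ has no $(2,0)+(0,2)$ part --- equivalently $F_A\wedge\Omega=0$, since a $(1,1)$-form wedged with the $(3,0)$-part has holomorphic degree $4$ and hence vanishes on a complex $3$-fold --- and the second is primitivity.) Next, curvature pulls back, so $F_{\pi^{*}A}=\pi^{*}F_A$; since $\omega$ and $\Omega_1$ on $X^7$ are themselves pulled back from $Y^6$, both $F_A\wedge\Omega_1$ and $F_A\wedge\omega^2$ are $\pi$-pullbacks of forms on $Y^6$, and they vanish there by the pHYM equations. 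Wedging with $\psi_{1/\sqrt2}$,
\[
  F_{\pi^{*}A}\wedge\psi_{1/\sqrt2}
  \;=\; -\tfrac{1}{\sqrt2}\,\eta_1\wedge\pi^{*}\!\left(F_A\wedge\Omega_1\right)
  \;+\; \tfrac12\,\pi^{*}\!\left(F_A\wedge\omega^2\right)
  \;=\;0 ,
\]
which is exactly the assertion that $\pi^{*}A$ is a $G_2$-instanton with respect to $\varphi_{1/\sqrt2}$.

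Because the conclusion is forced by the algebraic form of $\psi_{1/\sqrt2}$, there is no genuine analytic obstacle; the argument is the direct analogue of the proof of Proposition \ref{prop:G2InstFromQK}, with the nearly K\"ahler base $Y^6$ and the pair $\{\omega^2,\Omega_1\}$ playing the role that the quaternion-K\"ahler base $Z^4$ and the triple $\{\omega_i\}$ play there. The only point needing care is conventions: one must check that ``pHYM'' is being used in the strong sense (primitive $(1,1)$, so that $F_A\wedge\omega^2=0$ holds on the nose rather than $\Lambda F_A$ being a nonzero central constant), and one must invoke the standard $\SU(3)$ linear-algebra identities that a primitive $(1,1)$-form in complex dimension three annihilates both $\omega^2$ and $\Omega_1$ under exterior product. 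Alternatively, one can bypass the type decomposition by using the equivalent first-order form $\ast_{6}F_A=-F_A\wedge\omega$ of the pHYM equation together with the fact that $\pi$ is a Riemannian submersion for $g_{\varphi_{1/\sqrt2}}$, exactly as in Proposition \ref{prop:G2InstFromQK}.
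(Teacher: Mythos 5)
Your proposal is correct and follows essentially the same route as the paper: it uses the decomposition $\psi_t=-t\,\eta_1\wedge\Omega_1+\tfrac12\omega^2$ from remark \ref{rem:G2fromNK} together with the pHYM conditions $F_A\wedge\Omega_1=0=F_A\wedge\omega^2$ to conclude $F_{\pi^*A}\wedge\psi_{1/\sqrt2}=0$. The extra discussion of conventions and pullbacks is fine but not a different argument.
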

\begin{proof}
If $A$ is pHYM, its curvature $F$ satisfies $F \wedge \omega^2 =0= F \wedge \Omega_1$. Then, writing $\varphi_1$ in terms of $(\omega, \Omega_1)$ as in remark \ref{rem:G2fromNK} we have $F \wedge \psi_{1/ \sqrt{2}}=0$ and so $A$ is a $G_2$-instanton with respect to $\varphi_{1/ \sqrt{2}}$.
\end{proof}

\begin{remark}
\begin{enumerate}
\item Every nearly parallel $G_2$-manifold carries a metric compatible connection $A$, in the tangent bundle whose holonomy is in $G_2$. Therefore, by the Ambrose-Singer theorem $F_A$ takes values in $\Lambda^2 \otimes \mathfrak{g}_2$. This connection is metric compatible and has anti-symmetric torsion, and then one can show that $F_A$ takes values in $S^2(\Lambda^2)$, see proposition 3.1 in \cite{Harland2011} for example. Putting all this together we see that actually $F_A$ takes values in $S^2(\Lambda^2_{14})$, as $\mathfrak{g}_2 \cong \Lambda^2_{14}$, and so is a $G_2$-instanton.

\item A similar statement to proposition \ref{prop:G2InstFromNK} holds for the pullback of a HYM on $Y^6$ with respect to its K\"ahler Einstein structure $\omega_{KE}=d\eta_1/2$. Namely, the pullback of such a HYM connection yields a $G_2$-instanton for $\varphi^{ts}$.
\end{enumerate}
\end{remark}

\subsection{Deformation theory revisited}

In this subsection we shall restrict to case where $(X^7, \varphi)$ is a strictly nearly parallel $G_2$-manifold and prove some rigidity results regarding $G_2$-instantons on them. Then, in \ref{sec:NPEnergy}, we prove that on nearly parallel manifolds there are $G_2$-instantons which are not locally energy minimizing. Recall, from formula \ref{eq:action}, that the analogous statement for torsion free $G_2$-structures is always false. 

\subsubsection{Rigidity}
The fact that strictly nearly parallel manifolds are Einstein with positive Einstein constant gives some hope of obtaining higher regularity for the moduli space of $G_2$-instantons than on torsion-free $G_2$-manifolds. In this direction we have

\begin{proposition}\label{prop:Rigid}
Let $(X^7, \varphi)$ be a strictly nearly parallel $G_2$-manifold, and $A$ be a $G_2$-instanton with the property that all the eigenvalues of $b \mapsto - 2 \ast   [ \ast F_A^{14}   \wedge b]$ are smaller than $6$. Then, $A$ is rigid as a $G_2$-instanton and $(A, 0)$ unobstructed as a monopole. Moreover, if $A$ is irreducible, then $(A, 0)$ is also rigid as a monopole.
\end{proposition}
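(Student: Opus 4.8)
The plan is to reduce all three assertions to the single statement that
\[
\mathbf{H} := \{\, a \in \Omega^1(X,\mathfrak{g}_P) \ :\ d_A^* a = 0 \ \text{and}\ d_A a \wedge \psi = 0 \,\}
\]
vanishes, and then to prove $\mathbf{H} = 0$ by a Bochner argument. Since $\mathbf{H}$ is the space of harmonic representatives of $H^1$ of the instanton complex \ref{eq:ComplexInstanton}, the statement \emph{$A$ is rigid as a $G_2$-instanton} is equivalent to $\mathbf{H} = 0$. Because $\varphi$ is coclosed the operator $d_1^* \oplus d_2$ is formally self-adjoint; computing the formal adjoint of $d_2$ by the same manipulation that establishes Lemma \ref{lem:DiracSurjective} (using $d\psi = 0$) one gets $\coker d_2 = \ker d_2^* = \mathbf{H}$, so \emph{$(A,0)$ is unobstructed as a monopole} is likewise equivalent to $\mathbf{H}=0$. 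Finally $\ker(d_1^*\oplus d_2)$ consists of the pairs $(a,\phi)$ with $d_A^* a = 0$ and $\ast(d_A a\wedge\psi) = d_A\phi$, and by the argument of the proposition preceding Lemma \ref{lem:DiracSurjective} irreducibility of $A$ forces $\phi = 0$; thus that kernel is canonically $\mathbf{H}$, and $\mathbf{H}=0$ gives \emph{$(A,0)$ rigid as a monopole} when $A$ is irreducible. (Irreducibility is essential in the last step: if $A$ is reducible there is $\phi\neq 0$ with $d_A\phi = 0$, hence $(0,\phi)\in\ker(d_1^*\oplus d_2)$.)

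It remains to show $\mathbf{H}=0$ using the eigenvalue hypothesis. Normalize $\varphi$ so that $d\varphi = 4\psi$; then $g_\varphi$ is Einstein with $\Ric_{g_\varphi} = 6\,g_\varphi$. Let $a\in\mathbf{H}$ and put $\beta := d_A a\in\Omega^2_{14}(X,\mathfrak{g}_P)$, so $\ast\beta = -\beta\wedge\varphi$ and $\beta\wedge\psi = 0$. Since $d_A^* a = 0$, $\Delta_A a = d_A^* d_A a = d_A^*\beta = \ast d_A\ast\beta$, and expanding $d_A\ast\beta = -d_A(\beta\wedge\varphi)$ with $d_A\beta = [F_A\wedge a]$, $d\varphi = 4\psi$ and $\beta\wedge\psi = 0$ gives $\Delta_A a = -\ast\big([F_A\wedge a]\wedge\varphi\big)$, a zeroth-order expression in $a$. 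Comparing this with the Bochner--Weitzenb\"ock formula for $\mathfrak{g}_P$-valued $1$-forms, $\Delta_A a = \nabla_A^*\nabla_A a + \Ric(a) + \mathfrak{F}_{F_A}(a)$ where $\mathfrak{F}_{F_A}(a)(\,\cdot\,) = \sum_i [F_A(e_i,\,\cdot\,),a(e_i)]$, then pairing with $a$ and integrating by parts, one arrives at an identity of the form
\[
0 \ =\ \Vert\nabla_A a\Vert_{L^2}^2 \ +\ \int_X \big\langle (6\,\id - M)\,a,\ a\big\rangle\,\dvol_{g_\varphi},
\qquad M:\ b\ \longmapsto\ -2\ast[\ast F_A^{14}\wedge b];
\]
the content of the computation is that the two curvature terms $\ast([F_A\wedge a]\wedge\varphi)$ and $\mathfrak{F}_{F_A}(a)$ combine, via the $G_2$-decompositions of $\Lambda^2$ and $\Lambda^3$, into $-M(a)$. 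By hypothesis $M < 6\,\id$ pointwise, so by compactness $6\,\id - M\ge\varepsilon\,\id$ for some $\varepsilon>0$, whence $0\ge\varepsilon\Vert a\Vert_{L^2}^2$ and $a=0$.

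The step I expect to be the main obstacle is the identification of the combined zeroth-order curvature operator with precisely $6\,\id - M$: both the constant $6$ and the normalization and sign of $b\mapsto -2\ast[\ast F_A^{14}\wedge b]$ must be pinned down, which requires carefully tracking the interplay of the nearly-parallel torsion identity $d\varphi = 4\psi$, the instanton condition $\pi_7(d_A a) = 0$ (equivalently $d_A a\in\Omega^2_{14}$), and the representation theory of $G_2$ on $\Lambda^2$. A possibly cleaner route to the same identity is to recognize the first-order operator $a\mapsto(\ast(d_A a\wedge\psi),\,-d_A^* a)$ on $1$-forms as the restriction of a twisted Dirac operator for the canonical $G_2$-connection with its skew torsion, under the standard isomorphism $\bS\cong\Lambda^0\oplus\Lambda^1$ afforded by the $G_2$-structure; the Lichnerowicz formula then produces the scalar-curvature term (which, with the torsion, yields the $6$) and the Clifford action of $F_A\in\Omega^2_{14}\cong\mathfrak{g}_2$ on twisted spinors (which yields $M$), and the conclusion follows exactly as above. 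I would carry out whichever of the two computations is shorter in the conventions already fixed in the paper.
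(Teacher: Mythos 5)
Your proposal is correct and follows essentially the same route as the paper: reduce everything to the vanishing of the harmonic space of the deformation complex, then kill it with the Weitzenb\"ock formula, using $d\varphi=4\psi$, $\Ric=6\,g_\varphi$, and the fact that the two curvature terms combine into $2\ast[\ast F_A^{14}\wedge\,\cdot\,]$, exactly as in the paper's computation ending in $\Vert\nabla_A a\Vert_{L^2}^2+\mu\Vert a\Vert_{L^2}^2\le 0$. The only cosmetic difference is organizational: the paper extracts $d_A\phi=0$ by squaring the self-adjoint operator $d_1^*\oplus d_2$, while you invoke the earlier proposition on monopole deformations and work with $\mathbf{H}$ directly, which amounts to the same argument.
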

\begin{proof}
Let $A$ be a connection as in the statement. Then we shall consider the operator $d_1$, $d_2$ from the complex \ref{eq:ComplexMonopole}, associated with $(A,0)$. As $\varphi$ is coclosed these can be written as
$$d_2(a, \phi)= \ast (d_{A} a \wedge \psi)-d_A \phi , \ \ d_2^* b = (\ast d_A b \wedge \psi , - d_A^* b),$$
while $d_1(\psi)= (-d_A \psi, 0)$, $d_1^*(a,\phi)=-d_A^* a$. Then, the operator $d_1^* \oplus d_2$ which controls the deformation theory of the $G_2$-instanton equation is 
$$(d_1^* \oplus d_2)(a, \phi) = (\ast (d_A a \wedge \psi)-d_A \phi , -d_A^* a),$$
which is self-adjoint. In order to study its infinitesimal deformations we must therefore study its kernel. So let $A$ be as in the statement and $(a, \phi) \in \ker (d_1^* \oplus d_2)$. Then, $\ast (d_A a \wedge \psi) = d_A \phi$ and $d_A^* a=0$, moreover as $\varphi$ is coclosed we have that
\begin{eqnarray}\nonumber
0 & = & (d_1^* \oplus d_2)^2(a, \phi) \\ \nonumber
& = & (\Delta_A \phi + \ast([F_A \wedge a] \wedge \psi) , \ast d_A( \ast (d_A a \wedge \psi) \wedge \psi) + d_A d_A^* a ).
\end{eqnarray}
Then, if $A$ is an irreducible $G_2$-instanton the first entry gives $\Delta_A \phi=0$. Hence, taking the inner product with $\phi$ and integrating by parts we get $d_A \phi=0$. From the second entry above and using that $d \varphi = \lambda \psi$ we compute 
\begin{eqnarray}\nonumber
0 & = & 3\ast d_A \ast d_A^7 a + d_A d_A^* a \\ \nonumber
& = & \ast d_A \ast (d_A a + \ast(d_A a \wedge \varphi)) + d_A d_A^* a \\ \nonumber
& = & \Delta_A a + \ast ([F_A \wedge a] \wedge \varphi) + \lambda \ast (d_A a \wedge \psi) \\
& = & \Delta_A a + \ast ([F_A \wedge a] \wedge \varphi),
\end{eqnarray}
where in the last equality we used that $\ast (d_A a \wedge \psi) = d_A \phi=0$. Putting this together with the Weitzenb\"ock formula $\Delta_A a = \nabla_A^* \nabla_A a + \ast  [\ast F_A \wedge a]+ \Ric (a)=0$, we obtain
$$\nabla_A^* \nabla_A a + \ast  ( [ (\ast F_A  - F_A \wedge \varphi )\wedge a]+ \Ric (a).$$
As $F_A \wedge \varphi = 2 \ast F_A^7 - \ast F_A^{14}$, and $g_{\varphi}$ is Einstein with positive Einstein constant $6$, i.e. $\Ric= 6 \id$, we have
$$\nabla_A^* \nabla_A a + \ast  ( [ \ast ( 2F_A^{14}  - F_A^7 ) \wedge a]+ 6 a =0.$$
If $A$ is as in the hypothesis of the statement, then taking the inner product with $b$, the sum of the last two terms is positive and so we have
$$\Vert \nabla_A a \Vert^2_{L^2}+ \mu \Vert a \Vert^2_{L^2} \leq 0,$$
for some $\mu>0$. We conclude that $a$ must vanish identically and as we have already seen $d_A \phi=0$. Hence, any infinitesimal monopole deformation of $(A, 0)$ is of the form $(0, \phi)$ for some $\phi$ satisfying $d_A \phi=0$. These can obviously be integrated as the path $\lbrace (A, t \phi) \rbrace_{t \in \mathbb{R}}$ and so is a purely monopole deformation which keeps the connection $A$ the same $G_2$-instanton.\\
Exactly the same proof shows that $d_2$ is surjective (by showing that $\ker(d_2^*)=0$) proving that $(A,0)$ is unobstructed as a monopole. Moreover, if $A$ is irreducible, then $d_A \phi=0$ implies that $\phi$ must vanish and so $(A, 0)$ is also rigid as a monopole.
\end{proof}

\begin{corollary}\label{cor:Rigid}
Let $(X^7, \varphi)$ be a strictly nearly parallel $G_2$-manifold. Then, 
\begin{enumerate}
\item Abelian $G_2$-instantons are rigid.
\item Flat connections are rigid as $G_2$-instantons.
\end{enumerate}
\end{corollary}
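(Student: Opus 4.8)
The plan is to deduce both items directly from Proposition~\ref{prop:Rigid} by checking, in each case, that the relevant eigenvalue hypothesis on the operator $b \mapsto -2\ast[\ast F_A^{14} \wedge b]$ is satisfied, namely that all its eigenvalues are strictly smaller than $6$.

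For item (1), suppose $A$ is an Abelian $G_2$-instanton, so that after choosing an orthonormal basis of the (Abelian) structure group $\mathfrak{g}_P$ we may regard $F_A$ as an ordinary $2$-form with values in a trivial line. First I would observe that $F_A$ has type $\Lambda^2_{14}$: indeed $A$ is a $G_2$-instanton, so $\pi_7(F_A)=0$, whence $F_A = F_A^{14}$. Moreover, by Corollary~\ref{cor:G2InstLineBundle} (or directly, since on a strictly nearly parallel manifold $\psi$ is exact and $F_A$ is harmonic after the usual argument) $F_A$ is in fact $d_A$-harmonic; but then the Weitzenböck/Bochner argument together with positivity of the Ricci curvature forces more. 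The cleanest route: since the structure group is Abelian, $[\ast F_A^{14} \wedge b]$ vanishes identically (the bracket of commuting Lie algebra elements is zero), so the operator $b \mapsto -2\ast[\ast F_A^{14}\wedge b]$ is the zero operator, all of whose eigenvalues are $0 < 6$. Hence Proposition~\ref{prop:Rigid} applies and $A$ is rigid as a $G_2$-instanton.

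For item (2), let $A$ be a flat connection. Then $F_A = 0$, so in particular $F_A^{14}=0$ and again the operator $b \mapsto -2\ast[\ast F_A^{14}\wedge b]$ is identically zero, with all eigenvalues $0 < 6$. A flat connection is automatically a $G_2$-instanton (since $F_A \wedge \psi = 0$), so Proposition~\ref{prop:Rigid} applies verbatim and $A$ is rigid as a $G_2$-instanton. This completes the proof.

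Since both cases reduce to the vanishing of the bracket term $[\ast F_A^{14}\wedge b]$ — either because the structure group is Abelian, or because the curvature itself vanishes — there is essentially no obstacle here: the corollary is a direct specialization of Proposition~\ref{prop:Rigid}, and the only thing to verify is that in each listed case the curvature is a $G_2$-instanton (immediate) and the pointwise endomorphism controlling the Weitzenböck term is zero, so that its eigenvalues are trivially below the Einstein threshold $6$.
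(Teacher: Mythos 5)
Your proposal is correct and is exactly the paper's (implicit) argument: the corollary is stated as an immediate specialization of Proposition~\ref{prop:Rigid}, since in the Abelian case the bracket $[\ast F_A^{14}\wedge b]$ vanishes identically and in the flat case $F_A=0$, so the operator $b\mapsto -2\ast[\ast F_A^{14}\wedge b]$ is zero and its eigenvalues are trivially below $6$, while flatness (resp.\ the instanton hypothesis) guarantees the connection is a $G_2$-instanton. The short digression about $F_A$ being of type $\Lambda^2_{14}$ and harmonic is unnecessary but harmless; the bracket-vanishing observation is the whole proof.
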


One may wonder if the rigidity of Abelian $G_2$-instantons extends from strictly nearly parallel $G_2$-structures to a more general class, say coclosed ones. We will see a counterexample to this in the second bullet of the first item in theorem \ref{thm:X11}, see also the second item in remark \ref{rem:X11}.\\
We shall now comment on the relation of proposition \ref{prop:Rigid} to the $G_2$-instantons we constructed earlier in this section. 

\begin{remark}
\begin{enumerate}
\item Through corollary \ref{cor:G2InstLineBundle} we know that there is a unique $G_2$-instanton on every complex line bundle $L$ over a strictly nearly parallel $G_2$-manifold. This actually supersedes corollary \ref{cor:Rigid}.
\item A similar result to corollary \ref{cor:Rigid} holds for nearly K\"ahler manifolds, see theorem 1 in \cite{Charbonneau2016}. In fact, also in that case any complex line bundle admits a unique pseudo-Hermitian-Yang-Mills connection. See theorem 3.23 and remark 3.25 in \cite{Foscolo2016}.
\end{enumerate}
\end{remark}

It is also possible to find examples of $G_2$-instantons on strictly nearly parallel $G_2$-manifolds for which proposition \ref{prop:Rigid} does not apply

\begin{example}
Consider an anti-self-dual, Einstein $4$-orbifold $(Z, g_Z)$, with positive Einstein constant admitting a family of anti-self-dual connections (e. g. $\mathbb{S}^4$). Then, by proposition \ref{prop:G2InstFromQK}, these connections lift to a family of $G_2$-instantons for a strictly nearly parallel $G_2$-structure constructed on the principal $SO(3)$-bundle associated with $\Lambda^2_+ Z$. Therefore, in this case $G_2$-instantons have nontrivial moduli and so the hypothesis in proposition \ref{prop:Rigid} must fail.
\end{example}

\subsubsection{Yang-Mills unstable $G_2$-instantons}\label{sec:NPEnergy}

Let $A$ be a $G_2$-instanton for a nearly parallel $G_2$-structure $\varphi$ such that $d \varphi = \lambda \psi$. We have seen, in proposition \ref{prop:NearlyG2YM}, that such $G_2$-instantons are actually Yang-Mills connections. Moreover, equation \ref{eq:action} and the discussion below it show that in the torsion free case a $G_2$-instanton minimizes the Yang-Mills energy. That need not be the case for strictly nearly parallel $G_2$-structures as we now show with a counterexample.

\begin{example}\label{ex:UnstableS7}
Equip the $7$-dmensional sphere, $\mathbb{S}^7$ with the nearly parallel $G_2$-structure $\varphi^{ts}$ induced from the tri-Sasakian one, as in remark \ref{rem:TriSasakian_G2}. Then $g_{\varphi^{ts}}$ is the round metric. Now consider the Hopf bundle $\pi_H : \mathbb{S}^7 \rightarrow \mathbb{S}^4$. A verbatim of the proof of proposition \ref{prop:G2InstFromQK} shows that the pullback, via $\pi_H$, of a self-dual connection on $\mathbb{S}^4$ is also a $G_2$-instanton with respect to $\varphi^{ts}$. Hence, if $A$ is the pullback of a charge $1$ self dual connection on $\mathbb{S}^4$, it is a $G_2$-instanton for $\varphi^{ts}$. As $d \varphi^{ts}= 4 \psi^{ts}$, we have that $A$ is also a (nonflat) Yang-Mills connection. However, it is shown in \cite{Simons1979} that any nonflat Yang-Mills connection on $S^n$, $n>4$, is Yang-Mills unstable. 
\end{example}

\begin{remark}
We have also proved in proposition \ref{prop:G2InstFromQK} that the pull-back of a Yang-Mills connection on a quaternion-K\"ahler manifold is both a $G_2$-instanton and a Yang-Mills connection, with respect to any of the $G_2$-structures $\varphi_t$, for $t>0$. Hence, the example above also works also for any $\varphi_t$ with $t$ in a neighborhood of $1$.
\end{remark}

\section{Aloff-Wallach spaces}\label{sec:Aloff-Wallach}

We start this section in \ref{ss:AWgeometry} by summarizing some facts about the geometry of homogeneous, coclosed $G_2$-structures on  Aloff-Wallach spaces. Then in subsection \ref{ss:InvariantConnections} we determine all the invariant connections on homogeneous $SO(3)$-bundles over the Aloff-Wallach spaces and use them in \ref{sec:AWG2Instantons} and \ref{sec:X11} to classify invariant $G_2$-instantons on the Aloff-Wallach spaces. As a consequence, we discover that $G_2$-instantons can distinguish between different strictly nearly parallel $G_2$-structures on the same Aloff-Walach space. We also produce examples of some interesting phenomena, for instance: irreducible $G_2$-instantons that merge into the same reducible $G_2$-instanton as the $G_2$-structure varies. This particular phenomenon was expected to occur, but these are the first examples. In \ref{sss:X1-1} we shall also give examples of $G_2$-instantons for a nearly parallel $G_2$-structure in $X_{1,-1}$. Some of these are then shown to not be locally energy minimizing. In fact, they are saddles of the invariant Yang-Mills functional. Further, in \ref{sss:X11} we show that the existence of $G_2$-instantons distinguishes between a tri-Sasakian and a strictly nearly parallel $G_2$-structure on $X_{1,1}$.

\subsection{Geometry of coclosed $G_2$-structures}\label{ss:AWgeometry}

Let $k, l \in \mathbb{Z}$ and $U(1)_{k,l}$ be a circle subgroup of $SU(3)$ consisting of elements of the form
$$\left( \begin{array}{ccc}
e^{ik\theta} & 0 & 0 \\
0 & e^{il\theta} & 0 \\
0 & 0 & e^{im\theta} \end{array} \right),$$
where $k+l+m=0$. The Aloff-Wallach space $X_{k,l}=SU(3) / U(1)_{k,l}$ is the quotient of $SU(3)$ by this circle subgroup. We shall now recall some basic facts about the geometry and topology of the Aloff-Wallach spaces. Aloff-Wallach spaces inherited their name from \cite{Aloff1975}, where they were shown to admit homogeneous metrics with positive curvature, for $klm \neq 0$ (see also the survey paper \cite{Ziller2004} page 18). Later, Wang showed in \cite{Wang1982} that Aloff-Wallach spaces admit homogeneous Einstein metrics with positive scalar curvature, not all of which are the ones considered by Aloff and Wallach. In \cite{Baum1990}, page 116, the authors show that each $X_{k,l}$ admits at least two homogeneous Einstein metrics. The authors further show, that for $X_{k,k}$ (and those related to it through the action of the Weyl group of $SU(3)$, see remark \ref{rem:Weyl}) one of these is tri-Sasakian and the other strictly nearly parallel, while on the other $X_{k,l}$ they are both strictly nearly parallel. As a side remark, we mention that there are examples of different pairs $(k,l)$ such that the corresponding Aloff-Wallach spaces are homeomorphic, but not diffeomorphic, \cite{Kreck1998}.\\
Regarding coclosed $G_2$-structures, Aloff-Wallach spaces were shown to admit a real 4-dimensional family of homogeneous, coclosed $G_2$-structures as described in \cite{Cabrera1996}.\\
To describe homogeneous coclosed $G_2$-structures on $X_{k,l}$ we follow \cite{Cabrera1996} and identify $TX_{k,l}$ with the imaginary octonions $\text{Im} \, \mathbb{O}$ as follows. Let $$s=\frac{\sqrt{k^2+l^2+m^2}}{\sqrt{6}},$$ and fix the following basis for $\mathfrak{su}(3):$

\begin{align*}
&\ e_1= \frac{1}{\sqrt{2}} \left( \begin{array}{ccc}
0 & 1 & 0 \\
-1 & 0 & 0 \\
0 & 0 & 0 \end{array}\right), \: e_5=\frac{i}{\sqrt{2}} \left( \begin{array}{ccc}
0 & 1 & 0 \\
1 & 0 & 0 \\
0 & 0 & 0 \end{array} \right), \\
&\ e_2= \frac{1}{\sqrt{2}} \left( \begin{array}{ccc}
0 & 0 & 0 \\
0 & 0 & 1 \\
0 & -1 & 0 \end{array}\right), e_6= \frac{i}{\sqrt{2}} \left( \begin{array}{ccc}
0 & 0 & 0 \\
0 & 0 & 1 \\
0 & 1 & 0 \end{array}\right), \\
&\ e_3= \frac{1}{\sqrt{2}} \left( \begin{array}{ccc}
0 & 0 & -1 \\
0 & 0 & 0 \\
1 & 0 & 0 \end{array}\right), e_7= \frac{i}{\sqrt{2}} \left( \begin{array}{ccc}
0 & 0 & 1 \\
0 & 0 & 0 \\
1 & 0 & 0 \end{array}\right), \\
&\ e_4= \frac{i}{3\sqrt{2}s} \left( \begin{array}{ccc}
l-m & 0 & 0 \\
0 & m-k & 0 \\
0 & 0 & k-l \end{array}\right), \\
&\  H= \frac{i}{\sqrt{6} s} \left( \begin{array}{ccc}
k & 0 & 0 \\
0 & l & 0 \\
0 & 0 & m \end{array}\right),
\end{align*}

where $\sqrt{6}sH$ is the infinitesimal generator of the $\mathfrak{u}(1)_{k,l}$ action, while $\lbrace e_i \rbrace_{i=1}^7$ is a basis of $\mathfrak{u}(1)_{k,l}^{\perp}$.

A basis for $\text{Im} \, \mathbb{O}$ is given by $\{ i,j,k,e,ie,je,ke\}$. Let $A, B, C,$, $D$ be nonzero constants. We identify $TX_{k,l}$ with $\text{Im} \, \mathbb{O}$ as follows: identify $Ae_1$ with $i$ and $Ae_5$ with $ie$; $Be_2$ with $j$ and $Be_6$ with $je$; $Ce_3$ with $k$ and $Ce_7$ with $ke$; and finally $De_4$ with $e$. Let $\omega_1,...,\omega_7$ be the left-invariant coframe of $X_{k,l}$ obtained from the $e_i$ by dualising using the $SU(3)$-invariant metric $\langle A, B \rangle = -\tr(AB)$ on $\mathfrak{su}(3)$. With our identifications, the $G_2$ structure is
\begin{equation}\label{eq:G2str}
\varphi=ABC(\omega_{123}-\omega_{167}+\omega_{257} -\omega_{356})-D\omega_4\wedge(A^2\omega_{15}+B^2\omega_{26}+C^2\omega_{37}),
\end{equation}
The metric $g_\varphi$ and the $4$-form $\psi= \ast_{\varphi} \varphi$ associated to the $G_2$-structure are:
\begin{eqnarray}\nonumber
\psi & = & ABCD(\omega_{4567}-\omega_{2345}+\omega_{1346}-\omega_{1247})+B^2C^2\omega_{2367} + A^2C^2\omega_{1357} \\ \nonumber & & +A^2B^2\omega_{1256}. \\ \nonumber
g_{\varphi} & = & A^2\left(\omega_1^2 +\omega_5^2\right)+B^2\left(\omega_2^2 +\omega_6^2\right)+C^2\left(\omega_3^2 +\omega_7^2\right)+D^2\omega_4^2.
\end{eqnarray}

where we have fixed the orientation induced by the volume form $\text{vol}_\varphi=7A^2B^2C^2D \omega_{1234567}$. Also, notice that this family of $G_2$-structures is up to scaling only $3$-dimensional. We now calculate the exterior derivatives of $\varphi$ and $\psi$, to get information about the torsion of these $G_2$-structures. We find

\begin{align*}
\sqrt{2} \: d\varphi=& D(A^2+B^2+C^2)(\omega_{4567}-\omega_{2345}+\omega_{1346}-\omega_{1247}) \\& + (4ABCs-B^2Dl-C^2Dk)\omega_{2367}+(4ABCs-C^2Dm-A^2Dl)\omega_{1357}\\&+(4ABCs-A^2Dk-B^2Dm)\omega_{1256},
\end{align*}
$$d\psi=0,$$

where $s=\frac{\sqrt{k^2+l^2+m^2}}{\sqrt{6}}$. From these we can extract the torsion component $\tau_0$:

\begin{align*}
&\frac{7}{\sqrt{2}} \tau_0=4\left( \frac{A}{BC}+\frac{B}{AC}+\frac{C}{AB}\right)-\frac{D}{s}\left(\frac{l}{C^2}+\frac{k}{B^2}+\frac{m}{A^2}\right)
\end{align*}

\begin{definition}\label{def:C}
Let $\mathcal{C}$ denote the spaces of $G_2$-structures of the form \ref{eq:G2str}.
\end{definition}

\begin{lemma}\label{lem:Space_Of_G2_Structures}
Let $k \neq \pm l$, $l \neq \pm m$, $m \neq k$, then the space of homogeneous coclosed $G_2$-structures $\mathcal{C}$ may be identified with $(\mathbb{R}^+)^2 \times (\mathbb{R} \backslash \lbrace 0 \rbrace )^2$. Moreover, given $(A,B,C,D) \in \mathcal{C}$ the corresponding $G_2$-structure can be written as in equation \ref{eq:G2str}.
\end{lemma}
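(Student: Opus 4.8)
The plan is to show two things: (i) every homogeneous coclosed $G_2$-structure on $X_{k,l}$ is, after applying an element of the isotropy representation (equivalently, up to the natural action of the normalizer), of the form \eqref{eq:G2str} for some choice of nonzero constants $A,B,C,D$; and (ii) conversely, each such quadruple $(A,B,C,D) \in (\mathbb{R}^+)^2 \times (\mathbb{R}\setminus\{0\})^2$ actually gives a coclosed $G_2$-structure, so that $\mathcal{C}$ is exactly parametrized by this set. The converse direction (ii) is the easy half: it follows directly from the explicit computation of $d\varphi$ and $d\psi$ recorded just before the statement, which shows $d\psi = 0$ identically for every choice of the constants, hence every $\varphi$ of the form \eqref{eq:G2str} is coclosed; and for the parametrization one checks that distinct quadruples give genuinely distinct (non-isomorphic) $G_2$-structures, using that $g_\varphi$ already determines $A^2,B^2,C^2,D^2$ from the diagonal form of the metric in the invariant coframe, while the signs of $ABC$ and $D$ (or rather their product) are pinned down by the orientation convention $\mathrm{vol}_\varphi = 7A^2B^2C^2D\,\omega_{1234567}$.

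For the main direction (i), I would start from the representation-theoretic decomposition of the isotropy module $\mathfrak{m} = \mathfrak{u}(1)_{k,l}^\perp \cong \mathrm{Im}\,\mathbb{O}$ under $U(1)_{k,l}$. The hypotheses $k \neq \pm l$, $l \neq \pm m$, $m \neq k$ are precisely what guarantee that $\mathfrak{m}$ splits as a direct sum of three \emph{inequivalent} nontrivial $2$-dimensional real irreducibles (spanned by $\{e_1,e_5\}$, $\{e_2,e_6\}$, $\{e_3,e_7\}$, with distinct weights $\pm(k-l)$ up to scale — here one uses $k+l+m=0$ — and actually the three weights $k-l, l-m, m-k$ are pairwise distinct and nonzero) plus one trivial summand $\mathbb{R}e_4$. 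An invariant metric must therefore be diagonal in this block decomposition: it is determined by four positive constants, which we call $A^2, B^2, C^2, D^2$ after Cabrera's normalization. Similarly, an invariant $3$-form is constrained by Schur's lemma to lie in a low-dimensional space of invariant trilinear forms on $\mathfrak{m}$; I would enumerate which wedge monomials $\omega_{ijk}$ are $U(1)_{k,l}$-invariant (the weight must sum to zero), and observe that the invariant $3$-forms compatible with being a $G_2$-structure — i.e.\ lying in the open orbit $\Lambda^3_+$ — form exactly the family \eqref{eq:G2str}, possibly after absorbing the remaining continuous freedom (relative phases within each weight space) by a rotation in each $2$-plane, which is an isotropy-preserving change of invariant coframe. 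This reduces the classification of invariant $G_2$-structures to the stated $4$-parameter family, and then imposing $d\psi = 0$ — which, by the explicit computation above, holds for \emph{all} members of the family — shows the coclosed ones are all of them.

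The step I expect to be the main obstacle is verifying that the general invariant $3$-form in the open $G_2$-orbit can really be brought to the normal form \eqref{eq:G2str}: one must argue that the "off-diagonal" invariant $3$-forms (monomials mixing different weight blocks, such as $\omega_{135}$-type terms, together with the phase ambiguities inside each weight space) can all be gauged away by the residual $U(1)^3 \subset \mathrm{Aut}$ acting by independent rotations on the three $2$-planes, without destroying the requirement that $\varphi$ be $G_2$-positive and compatible with a fixed orientation. Concretely this is a finite-dimensional linear algebra computation — count the invariant monomials, see that $\mathrm{SO}(2)^3$ acts on them with enough freedom, and check the $G_2$-positivity is an open condition satisfied by the normal form — but it is the place where the genericity hypotheses $k\neq\pm l$, $l\neq\pm m$, $m\neq k$ are genuinely used (they are exactly what forces the three weight blocks to be pairwise inequivalent, so that no further invariant mixing terms appear). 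I would lean on \cite{Cabrera1996} for the bookkeeping here and present this part as a citation-backed reduction rather than re-deriving it in full.
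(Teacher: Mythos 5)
Your route coincides with the paper's on the substantive half: the paper's own proof of completeness is exactly the citation you ultimately fall back on (in \cite{Cabrera1996} the homogeneous coclosed structures are those with $s'=0$, and these are precisely the forms \eqref{eq:G2str}), and the fact that every member of the family is coclosed is the displayed computation $d\psi=0$. So for direction (i) and the easy half of (ii) there is no real divergence, only a more detailed representation-theoretic gloss on the citation.

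The genuine gap is in the identification of $\mathcal{C}$ with $(\mathbb{R}^+)^2\times(\mathbb{R}\setminus\{0\})^2$. The expression \eqref{eq:G2str} depends on $(A,B,C,D)$ only through $ABC$, $DA^2$, $DB^2$, $DC^2$, so the parametrization over $(\mathbb{R}\setminus\{0\})^4$ is not injective: the same $3$-form arises from the sign changes $(A,B)\mapsto(-A,-B)$, $(B,C)\mapsto(-B,-C)$, $(A,C)\mapsto(-A,-C)$, a $\mathbb{Z}_2\times\mathbb{Z}_2$ action, and the paper's proof consists precisely of quotienting by this action and taking the fundamental domain $A,B>0$. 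Your proposal never confronts this redundancy, and your substitute for it does not work as stated: the orientation $\mathrm{vol}_\varphi=7A^2B^2C^2D\,\omega_{1234567}$ pins down only $\sign(D)$, the sign of $C$ (once $A,B>0$) is read from the coefficient $ABC$ of $\omega_{123}$ in $\varphi$ rather than from the orientation, and, most importantly, nothing in your argument shows that a form \eqref{eq:G2str} built from a quadruple with $A<0$ or $B<0$ already occurs among those with $A,B>0$ — that surjectivity onto $\mathcal{C}$ is exactly the sign-flip symmetry — so the conclusion ``$\mathcal{C}$ is exactly parametrized by this set'' is unsupported as written. A secondary caveat: you phrase (i) as a normal form ``up to the action of the normalizer / a rotation in each $2$-plane'', which is weaker than what the lemma (and its later use, where connections and instanton equations are written in the same fixed coframe) requires, namely that every homogeneous coclosed structure is literally of the form \eqref{eq:G2str}; since you defer this step to \cite{Cabrera1996} anyway, as the paper does, be sure to invoke the reference in that stronger form.
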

\begin{proof}
It follows from the analysis in \cite{Cabrera1996} that for $k \neq \pm l$, $l \neq \pm m$, $m \neq k$ any homogeneous, coclosed $G_2$-structure is one of those considered above. These are precisely those with $s'=0$, in that reference. Now notice that the $G_2$-structures \ref{eq:G2str} are parametrized by $(A,B,C,D) \in (\mathbb{R} \backslash \lbrace 0 \rbrace )^4 $ minus the coordinates hyperplanes. Moreover, \ref{eq:G2str} stays invariant by any of the following maps $(A,B) \mapsto (-A,-B)$, $(B,C) \mapsto (-B, -C)$ and $(A,C) \mapsto (-A,-C)$. These discrete symmetries give rise to a $\mathbb{Z}_2 \times \mathbb{Z}_2$-action on $(\mathbb{R} \backslash \lbrace 0 \rbrace )^4$, generated by the first two symmetries. Hence, the $G_2$-structures in equation \ref{eq:G2str} are parametrized by $(\mathbb{R} \backslash \lbrace 0 \rbrace )^4 / \mathbb{Z}_2 \times \mathbb{Z}_2$. Taking a fundamental domain for the $\mathbb{Z}_2 \times \mathbb{Z}_2$ action we may equally well regard the space of $G_2$-structures as in \ref{eq:G2str} as $\mathbb{R}^+_A \times \mathbb{R}^+_B \times (\mathbb{R}_C \backslash \lbrace 0 \rbrace ) \times (\mathbb{R}_D \backslash \lbrace 0 \rbrace )$.
\end{proof}

\begin{remark}
\begin{enumerate}
\item Up to a cover, and the action of the Weyl group (see remark \ref{rem:Weyl} below), the restrictions in the lemma above can be simply written as $(k,l) \notin \lbrace (1,1), (1,-1) \rbrace$.

\item In the case when $(k,l) \in \lbrace (1,1), (1,-1) \rbrace$ we will continue to use $\mathcal{C}$ to denote the $G_2$-structures as in \ref{eq:G2str}. However, in that case there are homogeneous coclosed $G_2$-structures that can not be written as in \ref{eq:G2str} and so are not in $\mathcal{C}$.

\item We know that $\tau_1=\tau_2=0$ because the $G_2$-structure is coclosed, and we can compute $\tau_3$ by $\tau_3= \ast \left(d\varphi-\tau_0\psi\right)$.
\end{enumerate}
\end{remark} 

A $G_2$-structure of the form \ref{eq:G2str} is nearly parallel, i.e. $d\varphi = \lambda \psi,$ when $(A,B,C,D)$ satisfy
\begin{eqnarray}\label{eq:nearlyparallelG2eqns}
  A^2 + B^2 + C^2 -\sqrt{2} \lambda ABC & = &  0  \\ 
D \left(kA^2+mB^2 \right) -4sABC + \sqrt{2}\lambda s A^2B^2 & = & 0 \\ 
D \left(lB^2+kC^2 \right) -4sABC + \sqrt{2}\lambda s B^2C^2 & = & 0 \\
D \left(lA^2+mC^2 \right) -4sABC + \sqrt{2}\lambda s A^2C^2 & = & 0.
\end{eqnarray}

By fixing an orientation we can suppose that $\lambda >0$. Then, in \cite{Cabrera1996} it is shown that for $k \neq \pm l$, $l \neq \pm m$, $m \neq \pm l$, the system \ref{eq:nearlyparallelG2eqns} admits precisely eight solutions. Moreover, up to the action of $\mathbb{Z}_2 \times \mathbb{Z}_2$ alluded to in the proof of lemma \ref{lem:Space_Of_G2_Structures}, these eight solutions give only two nonequivalent solutions $\varphi \in \mathcal{C}$, which are in fact strictly nearly parallel. The following result completely determines the connected component in $\mathcal{C}$ in which each of these structure lives.

\begin{lemma}
Let $k \neq \pm l$, $l \neq \pm m$, $m \neq \pm l$ and $\varphi^{np_1}, \varphi^{np_2} \in \mathcal{C}$ denote the two strictly nearly parallel $G_2$ structures. Then, $C(\varphi^{np_1})$, $ C(\varphi^{np_2})$ have the same sign, while that of $D(\varphi^{np_1})$ and $D(\varphi^{np_2})$ is opposite. Moreover, $\sign(C)$ is constrained by $\lambda C>0$ and determines the orientation.
\end{lemma}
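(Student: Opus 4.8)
The plan is to read the three sign statements off the algebraic system \ref{eq:nearlyparallelG2eqns}, disposing of the claims about $C$ and about the orientation at once, and then isolating the content that forces $D$ to change sign between the two solutions. For the sign of $C$, work in the fundamental domain of Lemma~\ref{lem:Space_Of_G2_Structures}, so that $A,B>0$. The first equation of \ref{eq:nearlyparallelG2eqns} is $A^{2}+B^{2}+C^{2}=\sqrt2\,\lambda ABC$; its left-hand side is strictly positive, so $\lambda ABC>0$, and since $AB>0$ this says exactly $\lambda C>0$. Having normalised $\lambda>0$ by the choice of orientation, every nearly parallel $\varphi\in\mathcal{C}$ therefore has $C>0$, so in particular $C(\varphi^{np_1})$ and $C(\varphi^{np_2})$ have the same sign. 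That it is $\sign(D)$ which records the orientation is immediate from the normalisation $\mathrm{vol}_\varphi=7A^{2}B^{2}C^{2}D\,\omega_{1234567}$ of \ref{eq:G2str}: in the fundamental domain the Riemannian volume form of $g_\varphi$ is a positive multiple of $\sign(D)\,\omega_{1234567}$.

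Next I would reduce the sign of $D$ to the sign of a single scalar. Eliminating $\lambda$ from the last three equations of \ref{eq:nearlyparallelG2eqns} by means of the first one — taking the $A,B,C$-weighted combination of those equations in which all the $4sABC$ terms cancel once $A^{2}+B^{2}+C^{2}=\sqrt2\lambda ABC$ has been substituted — produces a relation of the form
\[
D\,\Big(\tfrac{m}{A^{2}}+\tfrac{k}{B^{2}}+\tfrac{l}{C^{2}}\Big)=c\,\lambda,
\]
with an explicit constant $c>0$; equivalently $D\,(mB^{2}C^{2}+kA^{2}C^{2}+lA^{2}B^{2})$ is a positive multiple of $A^{2}B^{2}C^{2}$. (The same identity drops out of the displayed formula for $\tau_{0}$ once one imposes $\tau_{0}=\lambda$ together with the first equation.) As $\lambda>0$, this gives $\sign(D)=\sign\!\big(mB^{2}C^{2}+kA^{2}C^{2}+lA^{2}B^{2}\big)$, so the assertion ``$D(\varphi^{np_1})$ and $D(\varphi^{np_2})$ have opposite signs'' becomes: the two nearly parallel solutions lie on opposite sides of the hypersurface $\{\,mB^{2}C^{2}+kA^{2}C^{2}+lA^{2}B^{2}=0\,\}$ in $(\mathbb{R}^{+})^{3}$.

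Finally I would establish that straddling. In the variables $p:q:r=A^{2}:B^{2}:C^{2}$ normalised by $p+q+r=1$, the last three equations of \ref{eq:nearlyparallelG2eqns} are linear in $D$; eliminating $D$ and using $k+l+m=0$ shows that \emph{every} solution satisfies $p^{2}+q^{2}+r^{2}=\kappa$ for a constant $\kappa$ depending only on $k,l,m$, together with one further polynomial relation, and the two inequivalent nearly parallel structures are precisely the two solutions of this reduced system. A short computation then gives $\sign(D)=\sign(klm)\cdot\sign\!\big((\beta-p)(\beta-q)(\beta-r)\big)$ for a suitable constant $\beta$, and since the two solutions share the same values of $p+q+r$ and $p^{2}+q^{2}+r^{2}$ (hence of $pq+qr+rp$), one checks that their two values of $(\beta-p)(\beta-q)(\beta-r)$ — which differ only through the third symmetric function $pqr$ — have opposite signs; alternatively one reads this off from the explicit list of the eight solutions produced in \cite{Cabrera1996}. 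Together with the first two steps this proves the lemma.

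I expect this last step to be the real obstacle. Steps one and two are short manipulations of \ref{eq:nearlyparallelG2eqns}, but the two nearly parallel structures are not exchanged by any obvious symmetry: the $\mathbb{Z}_{2}\times\mathbb{Z}_{2}$ of Lemma~\ref{lem:Space_Of_G2_Structures} fixes $\sign(D)$, and the simultaneous substitution $D\mapsto-D$, $(k,l,m)\mapsto(-k,-l,-m)$, although it preserves \ref{eq:nearlyparallelG2eqns}, merely rewrites $X_{k,l}$ in the coframe with $\omega_{4}$ replaced by $-\omega_{4}$ and hence produces no new solution. So controlling $\sign(D)$ at \emph{both} solutions simultaneously genuinely uses the structure of the solution set of \ref{eq:nearlyparallelG2eqns} rather than a symmetry.
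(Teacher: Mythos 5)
Your handling of the sign of $C$ coincides with the paper's: the first equation of \ref{eq:nearlyparallelG2eqns} forces $\lambda ABC>0$, hence $\lambda C>0$ and $C>0$ in the fundamental domain once $\lambda>0$ is fixed. Your second step is a genuine (and correct) addition not in the paper: combining $\tau_0=\lambda$ with the first equation gives $D\bigl(lA^2B^2+kA^2C^2+mB^2C^2\bigr)=\tfrac{s\lambda}{\sqrt2}A^2B^2C^2$, so $\sign(D)=\sign(\Delta)$ at every nearly parallel solution; this cleanly reformulates the claim as ``the two solutions lie on opposite sides of $\{\Delta=0\}$'' and ties it to the quantity controlling the Abelian instantons.

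The gap is exactly where you anticipate it: the third step is not a proof. The assertions that, after normalising $p+q+r=1$, every solution satisfies $p^2+q^2+r^2=\kappa(k,l,m)$, that $\sign(D)=\sign(klm)\cdot\sign\bigl((\beta-p)(\beta-q)(\beta-r)\bigr)$ for some $\beta$, and that the two values of $pqr$ force opposite signs of that product, are all left unverified, and nothing in your sketch rules out the a priori possibility that both solutions sit on the same side of $\{\Delta=0\}$. The paper does not attempt any of this: it simply quotes the case analysis at the bottom of page 413 of \cite{Cabrera1996}, which shows the two solutions have opposite sign of $ABCD$, and combines this with $ABC>0$. Your ``alternatively read it off from the eight explicit solutions in \cite{Cabrera1996}'' fallback is in effect the paper's argument, so either carry out your symmetric-function computation in full or make that citation the actual proof of the straddling step. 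A secondary point: your justification of the last clause is not the paper's. You tie the orientation to $\sign(D)$ via $\mathrm{vol}_\varphi=7A^2B^2C^2D\,\omega_{1234567}$; but since the two solutions have opposite $D$ and equal $\sign(C)$, that reading is incompatible with the lemma's assertion that $\sign(C)$ determines the orientation. The intended meaning (implicit in ``fix an orientation and suppose $\lambda>0$'') is that the orientation choice is the choice of $\sign(\lambda)$, i.e.\ of $\varphi$ versus $-\varphi$, which by $\lambda C>0$ is recorded by $\sign(C)$; your argument proves the inequality $\lambda C>0$ but should phrase the orientation statement this way rather than through the sign of $D$.
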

\begin{proof}
Fix an orientation and suppose that $\lambda >0$. Then, the first equation in \ref{eq:nearlyparallelG2eqns} above implies that $ABC$ must be positive for any such $\varphi$. On the other hand, it follows from the analysis in the bottom of page 413 in \cite{Cabrera1996} that the two solutions have different signs of $ABCD$ and so they must in fact have different signs of $D$. Choosing $\varphi \in \mathcal{C}$, we have and as $A>0$ and $B>0$, so we must also have $C>0$ (as $ABC>0$), which then implies each of the solutions has a different sign of $D$.
\end{proof}

\begin{remark}\label{rem:Weyl}
\begin{enumerate}
\item Notice that the Weyl group of $SU(3)$ moves the $U(1)_{k,l}$ subgroup inducing an action in the set of Aloff-Wallach spaces. In fact, this action is generated by $X_{k,l} \mapsto X_{l,k}$ and $X_{k,l} \mapsto X_{k,m}$, which can be combined to generate the order $3$-element $\sigma: X_{k,l} \rightarrow X_{l,m}$, i.e. cyclic permutations of $(k,l,m)$. Hence, up to coverings and this action, there is no loss in supposing that $k$ and $l$ are coprime and that $k \geq 0$, $-l \leq k \leq 2 l$.

\item Consider the $U(2)$-subgroup of $SU(3)$ generated by the image of the homomorphism $SU(2) \times U(1) \rightarrow SU(3)$ given by
$$(A, e^{i \theta}) \mapsto \diag(A e^{i \theta}, \det( Ae^{i\theta})^{-1} ).$$
As $\mathbb{C}^2 \cong SU(3)/U(2)$, we obtain a canonical fibration
$$ \pi_1: X_{k,l} \rightarrow \mathbb{CP}^2,$$
whose fibers one can check to be the lens spaces $U(2)/U(1)_{k,l} \cong S^3/\mathbb{Z}_{\vert k+l \vert}$, if $k+l \neq 0$, or $S^1 \times S^2$, if $k+l=0$. In fact, using the order $3$ element $\sigma$, we may obtain two more fibrations $\pi_2 = \pi_1 \circ \sigma$ and $\pi_3= \pi_1 \circ \sigma^2$, of $X_{k,l}$ over $\mathbb{CP}^2$. At least two of which have fibres $S^3/ \mathbb{Z}_p$ for a nonzero $p \in \lbrace \vert k \vert , \vert l \vert , \vert m \vert \rbrace$.
\end{enumerate}
\end{remark}

\subsection{Invariant Connections}\label{ss:InvariantConnections}

Given a Lie group $G$, a principal $G$-bundle $P$ over $X_{k,l}=SU(3) / U(1)_{k,l}$ is said to be $SU(3)$-homogeneous (or just homogeneous) if there is a lift of the $SU(3)$ action on $X_{k,l}$ to the total space, which commutes with the right $G$-action on $P$. In general, homogeneous $SO(3)$-principal bundles over $X_{k,l}$ are determined by their isotropy homomorphisms $\lambda_n : U(1) \to SO(3)$, and are constructed via 
$$P_{n} = SU(3) \times_{(U(1)_{k,l},\lambda_n)} SO(3),$$
where the possible group homomorphisms $\lambda_n$ are parametrised by $n \in \mathbb{Z}$. Explicitly we can think of $SO(3)$ as $SU(2)/ \mathbb{Z}_2$, where $\mathbb{Z}_2$ acts via multiplication by minus the identity matrix $-\textbf{1}$, then $\lambda_n$ is given by $$\lambda_n(\theta) =  \begin{pmatrix}
  e^{i\frac{n}{2}\theta} & 0  \\
  0 & e^{-i\frac{n}{2}\theta}
 \end{pmatrix} \mod - \textbf{1}.  $$

\begin{definition}
Let $T_1, T_2, T_3$ be a basis for $\mathfrak{su}(2)$ such that $[T_i,T_j]=2\epsilon_{ijk}T_k$. Then the canonical invariant connection on $P_{n}$ is $$A^n_c = - \frac{n}{2} \frac{h}{\sqrt{6} s} \otimes T_1.$$ 
\end{definition}

Using the Maurer-Cartan equations, the curvature of the canonical invariant connection $A^n_c$ is found to be $$F^c_c= - \frac{n}{s\sqrt{6}} \left( \left(k-l\right)\omega_{15} +(l-m)\omega_{26} +(m-l)\omega_{37} \right).$$

Wang's theorem \cite{Wang1958} classifies invariant connections on homogeneous bundles. In our situation, Wang's theorem says that $SU(3)$-invariant connections on $P_{n}$ are in bijection with morphisms of $U(1)$-representations $$\Lambda : (\mathfrak{m},\text{Ad}) \to (\mathfrak{so}(3),\text{Ad}\circ \lambda_n),$$ where $\mathfrak{m}$ is the $U(1)_{k,l}$-Ad complement to $\langle H \rangle$ in $\mathfrak{su(3)}$. If $(1,1)$ is not in the Weyl orbit of $(1,1)$ and $n \neq 0$, these split into the irreducible real representations as
\begin{align*}
&\ \mathfrak{m}= \langle X_1, X_5 \rangle_{k-l} \oplus \langle X_2, X_6 \rangle_{l-m} \oplus\langle X_3, X_7 \rangle_{m-k} \oplus \langle X_4 \rangle, \\
&\ \mathfrak{so}(3) =\langle T_1 \rangle \oplus \langle T_2, T_3 \rangle_n,
\end{align*}
where the weight of each 2-dimensional irreducible representation is indicated by a subscript. It will be useful to use the notation $V_1 = \langle X_1, X_5 \rangle, V_2 = \langle X_2, X_6 \rangle, V_3 = \langle X_3, X_7 \rangle$ (these are simply the real root spaces of $\mathfrak{su}(3)$). Applying Schur's lemma and Wang's theorem \cite{Wang1958} we have

\begin{lemma}\label{lem:ConnectionForms}($(k,l) \neq (1,1)$)
Let $A^n \in \Omega^1(SU(3), \mathfrak{so}(3))$ be the connection 1-form of an invariant connection on $P_{n}$ over $X_{k,l}$, for $(k,l)$ not in the Weyl orbit of $(1,1)$. Then it is left-invariant and can be written as $A^n=A^n_c + (A^n-A^n_c),$ where $(A-A^n_c) \in \mathfrak{m}^* \otimes \mathfrak{so}(3)$, extended as a left-invariant 1-form with values in $\mathfrak{so}(3),$ is given by
$$A-A^n_c = a_1 \psi_1 + a_2 \psi_2 + a_3 \psi_3 + b \omega_4 \otimes T_1.$$ Here the $\psi_i$ denote isomorphisms  $\psi_i : V_i \xrightarrow{\sim} \langle T_2, T_3 \rangle$ with $\vert \psi \vert \in \lbrace 0, 1 \rbrace$ with respect to the fixed basis, and the $a_i, b \in \mathbb{R}$ are constants. Moreover, each $a_i$ must vanish if the weight of $V_i$ is not equal to $n$, i.e.
\begin{eqnarray}\nonumber
a_1 & = & 0 , \ \text{if $n \neq k-l$} \\ \nonumber
a_2 & = & 0, \ \text{if $n \neq l-m$} \\ \nonumber
a_3 & = & 0 , \ \text{if $n \neq m-k$.}
\end{eqnarray}
\end{lemma}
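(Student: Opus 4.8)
The plan is to invoke Wang's theorem on invariant connections and then apply Schur's lemma to the relevant $U(1)$-representations. Write $\frg=\su(3)=\langle H\rangle\oplus\fm$ for the reductive splitting, with $\fm=V_1\oplus V_2\oplus V_3\oplus\langle X_4\rangle$ the $\Ad(U(1)_{k,l})$-invariant complement spanned by $e_1,\dots,e_7$, and take $n\neq0$. Wang's theorem \cite{Wang1958} identifies $SU(3)$-invariant connections on $P_n$ with linear maps $\Lambda:\frg\to\so(3)$ satisfying $\Lambda|_{\langle H\rangle}=d\lambda_n$ and $\Lambda\circ\Ad(h)=\Ad(\lambda_n(h))\circ\Lambda$ for $h\in U(1)_{k,l}$; the map with $\Lambda|_{\fm}=0$ corresponds to the canonical invariant connection $A^n_c$ (whose curvature $F^c_c$ is then obtained from the Maurer--Cartan equations), and a general invariant connection is $A^n=A^n_c+\Lambda|_{\fm}$ with $\Lambda|_{\fm}\in\Hom_{U(1)_{k,l}}(\fm,\so(3))$, the target carrying the representation $\Ad\circ\lambda_n$. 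Since $A^n_c$ and $\Lambda|_{\fm}$ are both assembled from left-invariant data on $SU(3)$, $A^n$ is automatically left-invariant, which is the first assertion.

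Next I would identify the two $U(1)_{k,l}$-module structures explicitly. Decomposing $\fm$ into real root spaces, the adjoint action of $U(1)_{k,l}$ rotates $V_1=\langle X_1,X_5\rangle$, $V_2=\langle X_2,X_6\rangle$, $V_3=\langle X_3,X_7\rangle$ with weights $k-l$, $l-m$, $m-k$, and fixes the diagonal line $\langle X_4\rangle$; all three weights are nonzero precisely because $(k,l)$ is not in the Weyl orbit of $(1,1)$. On the other side $\lambda_n$ takes values in the maximal torus of $SO(3)$ generated by $T_1$, so $\Ad\circ\lambda_n$ fixes $\langle T_1\rangle$ and rotates $\langle T_2,T_3\rangle$ with weight $n$. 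Schur's lemma over $\R$ for $U(1)$ now gives, summand by summand in $\Hom_{U(1)_{k,l}}(\fm,\so(3))$: the block $\langle X_4\rangle\to\langle T_1\rangle$ contributes the single real parameter $b$, appearing as $b\,\omega_4\otimes T_1$ with $\omega_4$ dual to $X_4$; the block $\langle X_4\rangle\to\langle T_2,T_3\rangle$ vanishes because $0\neq n$; each block $V_i\to\langle T_1\rangle$ vanishes because the weight of $V_i$ is nonzero; and each block $V_i\to\langle T_2,T_3\rangle$ vanishes unless the weight of $V_i$ equals $\pm n$, contributing in that case a copy of $\C$ recorded as $a_i\psi_i$. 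This is precisely the claimed expression, with $a_i=0$ unless $n=k-l$, $l-m$, $m-k$ respectively, the sign being absorbed into the choice of unit isomorphism $\psi_i$.

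The point needing care — and the one I expect to be the only real obstacle — is reconciling the abstract space $\Hom_{U(1)_{k,l}}(\fm,\so(3))$ with the concrete $1$-form $a_1\psi_1+a_2\psi_2+a_3\psi_3+b\,\omega_4\otimes T_1$: each matching $2$-dimensional summand carries a genuine real two-parameter family of equivariant maps (a copy of $\C$), so one must either allow the unit isomorphism $\psi_i$ to range over the circle of such maps — so that $a_i$ together with $\psi_i$ recover $\C$ — or exploit the residual invariant gauge freedom, namely the centralizer $SO(2)\subset SO(3)$ of $\im\lambda_n$ (nontrivial since $n\neq0$), to put $\psi_i$ in a fixed normalization. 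I expect this bookkeeping rather than anything conceptual to be the crux; reading the weights off the explicit matrices for the $e_i$ and deriving $A^n_c$ and $F^c_c$ from the Maurer--Cartan equations are routine.
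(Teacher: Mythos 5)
Your proposal is correct and follows essentially the same route as the paper: Wang's theorem identifies invariant connections with $U(1)_{k,l}$-equivariant maps $\mathfrak{m}\to\mathfrak{so}(3)$, and Schur's lemma applied to the weight decompositions $\mathfrak{m}=V_1\oplus V_2\oplus V_3\oplus\langle X_4\rangle$ and $\mathfrak{so}(3)=\langle T_1\rangle\oplus\langle T_2,T_3\rangle_n$ yields the stated form, with the circle of equivariant isomorphisms on a matching block absorbed into the choice of $\psi_i$ (or normalized by the residual $SO(2)$ gauge freedom, exactly as the paper does in its subsequent case analysis). Your restriction to $n\neq 0$ in splitting $\mathfrak{so}(3)$ mirrors the paper's own treatment, so nothing further is needed.
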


\begin{remark}
\begin{enumerate}
\item Notice that the order $3$ element of the Weyl group $W$ permuts the different roots and so the different root spaces. In particular, there is no loss in considering the Aloff-Wallach spaces up to the action of $W$. Hence, in the previous lemma when we consider the case $k \neq l$, it is implicit that also $l \neq m$ or $m \neq k$.

\item Since it is not possible to have $k-l=l-m=m-k=n$ without forcing $k=l=m=n=0$, we must have $a_1a_2a_3=0$. This splits us into seven cases to be analyzed below.
\end{enumerate}
\end{remark}

\begin{lemma}\label{lem:ConnectionForms_Case_(1,1)}($(k,l)=(1,1)$)
Let $A^n \in \Omega^1(SU(3), \mathfrak{so}(3))$ be the connection 1-form of an invariant connection on $P_{n}$ over $X_{1,1}$. Then it is left-invariant and can be written as $A^n=A^n_c + (A^n-A^n_c),$ where $(A-A^n_c) \in \mathfrak{m}^* \otimes \mathfrak{so}(3)$, extended as a left-invariant 1-form with values in $\mathfrak{so}(3),$ is given by
$$A-A^n_c = a_1 \chi + a_2 \psi_2 + a_3 \psi_3 .$$ Here the $\psi_i$ denote isomorphisms $\psi_i : V_i \xrightarrow{\sim} \langle T_2, T_3 \rangle$ with $\vert \psi \vert=1$ with respect to the fixed basis, and $\chi: \langle X_1,X_5,X_4 \rangle \rightarrow \mathfrak{so}(3)$ denotes a linear map, which in the case $n \neq 0$ must take values in $\langle T_1 \rangle \subset \mathfrak{so}(3)$. 
\end{lemma}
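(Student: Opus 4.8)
The plan is to run the same Wang-theoretic argument used for Lemma~\ref{lem:ConnectionForms}, the only genuinely new ingredient being the decomposition of the isotropy representation $\mathfrak{m}$ when $(k,l)=(1,1)$. First I would record the relevant weights. For $X_{1,1}$ one has $m=-k-l=-2$, so $\Ad|_{U(1)_{1,1}}$ acts on $V_1=\langle X_1,X_5\rangle$, $V_2=\langle X_2,X_6\rangle$, $V_3=\langle X_3,X_7\rangle$ with weights $k-l=0$, $l-m=3$, $m-k=-3$ respectively, while $\langle X_4\rangle$ is fixed. Hence $V_1\oplus\langle X_4\rangle=\langle X_1,X_5,X_4\rangle$ is a three-dimensional \emph{trivial} $U(1)_{1,1}$-subrepresentation, and
\[\mathfrak{m}=\langle X_1,X_5,X_4\rangle\oplus\langle X_2,X_6\rangle_3\oplus\langle X_3,X_7\rangle_{-3}.\]
On the target side, $\mathfrak{so}(3)=\langle T_1\rangle\oplus\langle T_2,T_3\rangle_n$, where $\langle T_1\rangle$ is trivial and, since $\lambda_n(\theta)=\diag(e^{in\theta/2},e^{-in\theta/2})\bmod-\mathbf{1}$ and the adjoint action doubles the phase of the off-diagonal part of $\mathfrak{su}(2)$, the representation $\Ad\circ\lambda_n$ acts on $\langle T_2,T_3\rangle$ with weight exactly $n$.

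By Wang's theorem \cite{Wang1958}, invariant connections on $P_n$ are in bijection with morphisms of $U(1)$-representations $\Lambda\colon(\mathfrak{m},\Ad)\to(\mathfrak{so}(3),\Ad\circ\lambda_n)$, with $A^n_c$ corresponding to $\Lambda=0$ and $A^n-A^n_c$ being precisely $\Lambda$ extended to a left-invariant $\mathfrak{so}(3)$-valued $1$-form; in particular $A^n$ is automatically left-invariant. Thus it remains to compute $\Hom_{U(1)}(\mathfrak{m},\mathfrak{so}(3))$ summand by summand via Schur's lemma. A morphism restricted to the trivial summand $\langle X_1,X_5,X_4\rangle$ must land in the trivial isotypic component of $\mathfrak{so}(3)$, which is $\langle T_1\rangle$ when $n\neq0$ and all of $\mathfrak{so}(3)$ when $n=0$; this contributes an arbitrary linear map $\chi\colon\langle X_1,X_5,X_4\rangle\to\mathfrak{so}(3)$, constrained to take values in $\langle T_1\rangle$ when $n\neq0$. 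A morphism out of $\langle X_2,X_6\rangle_3$ is nonzero only into a weight-$3$ subrepresentation of $\mathfrak{so}(3)$, i.e.\ into $\langle T_2,T_3\rangle_n$ with $|n|=3$, in which case, fixing a unit-norm isomorphism $\psi_2\colon V_2\xrightarrow{\sim}\langle T_2,T_3\rangle$, it equals $a_2\psi_2$ for some $a_2\in\mathbb{R}$ (so $a_2=0$ unless $|n|=3$); likewise $\langle X_3,X_7\rangle_{-3}$ contributes $a_3\psi_3$. Summing the three contributions yields $A^n-A^n_c=a_1\chi+a_2\psi_2+a_3\psi_3$, with $a_1$ merely a normalization factor for $\chi$, which is the claimed form.

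There is no serious obstacle here; it is essentially a bookkeeping exercise, but two points need care. First, the feature distinguishing $(k,l)=(1,1)$ from the generic case of Lemma~\ref{lem:ConnectionForms} is exactly the vanishing $k-l=0$: it demotes $V_1$ from a weight-$(k-l)$ two-dimensional irreducible (which there contributes a term $a_1\psi_1$) to a trivial representation, forcing it to merge with $\langle X_4\rangle$ into the three-dimensional trivial block on which $\chi$ lives, and thereby removing the separate $b\,\omega_4\otimes T_1$ term; one must make sure this merging is recorded correctly. Second, whether $\langle T_2,T_3\rangle$ can absorb part of the image of $\chi$ depends entirely on whether $n=0$, so that case split must be tracked, and the entire computation rests on the claim that $\Ad\circ\lambda_n$ has weight $n$ (and not, say, $2n$ or $n/2$) on $\langle T_2,T_3\rangle$; I would verify this explicitly from the matrix form of $\lambda_n$ together with the normalization $[T_i,T_j]=2\epsilon_{ijk}T_k$ before proceeding.
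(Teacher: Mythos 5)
Your proposal is correct and follows essentially the same route as the paper: Wang's theorem plus Schur's lemma applied to the weight decomposition $\mathfrak{m}=\langle X_1,X_5,X_4\rangle\oplus\langle X_2,X_6\rangle_3\oplus\langle X_3,X_7\rangle_{-3}$ (the paper simply lists the trivial block as three one-dimensional trivial summands, which is the same thing) and $\mathfrak{so}(3)=\langle T_1\rangle\oplus\langle T_2,T_3\rangle_n$, including the same check that $\Ad\circ\lambda_n$ acts with weight exactly $n$ on $\langle T_2,T_3\rangle$. The only phrasing to adjust: the real intertwiner space between the two-dimensional weight-$\pm 3$ representations is two-dimensional (a scale and a rotation), so you should conclude that any nonzero component equals $a_2\psi_2$ for \emph{some} norm-one isomorphism $\psi_2$, as the lemma states, rather than a real multiple of a $\psi_2$ fixed in advance; the paper removes this rotation freedom only later, by an invariant gauge transformation in the case-by-case analysis.
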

\begin{proof}
The proof in this case is similar and we simply give the main steps. As before the proof amounts to using Wang's theorem, \cite{Wang1958}, to find the invariant connections. One must split the corresponding representations into irreducibles as
\begin{align*}
&\ \mathfrak{m}= \langle X_1 \rangle \oplus \langle X_5 \rangle  \oplus \langle X_4 \rangle \oplus \langle X_2, X_6 \rangle_{3} \oplus\langle X_3, X_7 \rangle_{-3}, \\
&\ \mathfrak{so}(3)=\langle T_1 \rangle \oplus \langle T_2, T_3 \rangle_n, \  \text{if $n \neq 0$}
 \\
&\ \mathfrak{so}(3)=\langle T_1 \rangle \oplus \langle T_2 \rangle \oplus \langle T_3 \rangle, \  \text{if $n = 0$}.
\end{align*}
Then the conclusion follows from a similar application of Schur's lemma.
\end{proof}

\subsubsection{Case splitting, for $k \neq l$}\label{sss:Case_Splitting_kl}

We shall now consider the case when $X_{k,l}$ is such that $(k,l)$ is not in the Weyl orbit of $(1,1)$, which will be investigated separately. Here we use lemma \ref{lem:ConnectionForms} in order to write down all the possible connection $1$-forms, up to invariant gauge transformations. We shall analyse the different cases corresponding to the different values of $n$.\\

\noindent \textbf{Case 0:} $n \neq k-l, l-m, m-k$\\

In this case $a_1=a_2=a_3=0$ and so every connection is reducible, with $$A^n=\left( - \frac{n}{2} \frac{h}{\sqrt{6} s} + b\omega_4 \right) \otimes T_1$$

\noindent \textbf{Case 1:} $n=k-l$\\

In this case $a_2 = a_3 =0$ and we may use our gauge freedom to write the isomorphism $\psi_1 : V_1 \xrightarrow{\sim} \langle T_2, T_3 \rangle$ as $\psi_1 = \omega_1 \otimes T_2 + \omega_5 \otimes T_3$. Then we have $$A^{k-l}=\left( - \frac{k-l}{2} \frac{h}{\sqrt{6} s} + b\omega_4 \right) \otimes T_1 + a_1 \left(\omega_1 \otimes T_2 + \omega_5 \otimes T_3 \right).$$

\noindent \textbf{Case 2:} $n=l-m$\\

Now we must have $a_1=a_3=0$ and as in case 1 we may use our gauge freedom to fix the form of $\psi_2$. We can write the connection form as $$A^{l-m}=\left( - \frac{l-m}{2} \frac{h}{\sqrt{6} s} + b\omega_4 \right) \otimes T_1 + a_2 \left(\omega_2 \otimes T_2 + \omega_6 \otimes T_3 \right).$$

\noindent \textbf{Case 3:} $n=m-k$\\

Similarly, in this case $a_1=a_2=0$ and we can write the connection form as $$A^{m-k}=\left( - \frac{m-k}{2} \frac{h}{\sqrt{6} s} + b\omega_4 \right) \otimes T_1 + a_3 \left(\omega_3 \otimes T_2 + \omega_7 \otimes T_3 \right).$$

\noindent \textbf{Case 4:} $n=m-k=l-m,$ i.e. $n=l=-k$\\

In this case $a_1=0$ and we exhaust our gauge freedom in fixing $\psi_2=\omega_2 \otimes T_2 + \omega_6 \otimes T_3,$ so that $$\psi_3=\omega_3 \otimes (\cos(\beta) T_2 + \sin(\beta)T_3) + \omega_7 \otimes (-\sin(\beta)T_2+\cos(\beta)T_3)$$ is dependent on an angle parameter $\beta.$ The connection form is
\begin{align*}
 A^{l}  =  &\ \left( - \frac{l}{2} \frac{h}{\sqrt{6} s} + b\omega_4 \right) \otimes T_1 + a_2 \left(\omega_2 \otimes T_2 + \omega_6 \otimes T_3 \right) + \\
&\ a_3 \left(\omega_3 \otimes (\cos(\beta) T_2 + \sin(\beta)T_3) + \omega_7 \otimes (-\sin(\beta)T_2+\cos(\beta)T_3) \right).
\end{align*}

\noindent \textbf{Case 5:} $n=l-m=k-l,$ i.e. $n=k=-m$\\

This is similar to case 4, but with $a_2=0$. The connection form is
\begin{align*}
 A^{k}  =  &\ \left( - \frac{k}{2} \frac{h}{\sqrt{6} s} + b\omega_4 \right) \otimes T_1 + a_1 \left(\omega_1 \otimes T_2 + \omega_5 \otimes T_3 \right) + \\
&\ a_3 \left(\omega_3 \otimes (\cos(\beta) T_2 + \sin(\beta)T_3) + \omega_7 \otimes (-\sin(\beta)T_2+\cos(\beta)T_3) \right).
\end{align*}

\noindent \textbf{Case 6:} $a_3=0, n=k-l=m-k,$ so that $n=m=-l$\\

This is similar to cases 4 and 5, except that we use $\alpha$ for the angle parameter. The connection form is
\begin{align*}
 A^{m}  =  &\ \left( - \frac{m}{2} \frac{h}{\sqrt{6} s} + b\omega_4 \right) \otimes T_1 + a_1 \left(\omega_1 \otimes T_2 + \omega_5 \otimes T_3 \right) + \\
&\ a_2 \left(\omega_2 \otimes (\cos(\alpha) T_2 + \sin(\alpha)T_3) + \omega_6 \otimes (-\sin(\alpha)T_2+\cos(\alpha)T_3) \right).
\end{align*}

\subsubsection{Case splitting, for $k=l=1$}\label{sss:Case_Splitting_11}

Now we use lemma \ref{lem:ConnectionForms_Case_(1,1)} to write down the possible connection 1-forms for an invariant connection on $P_{n}$ over  $X_{1,1}$, splitting into cases depending on the value of $n$.\\

\noindent \textbf{Case 0: }$n \neq 3, -3, 0.$ \\
$$A^n=\left( -\frac{n}{2}\frac{h}{\sqrt{6}}+b\omega_4+a_1\omega_1+a_5\omega_5 \right) \otimes T_1,$$
where $a_1, a_5, b \in \mathbb{R}.$\\

\noindent \textbf{Case 1:} $n=0.$ \\
$$A^0 = \omega_1 \otimes c_1 + \omega_4 \otimes c_4 + \omega_5 \otimes c_5,$$
where $c_1, c_4, c_5 \in \mathfrak{so}(3)$.\\

\noindent \textbf{Case 2:} $n=3.$ \\
$$A^3=\left( -\frac{3}{2}\frac{h}{\sqrt{6}}+b\omega_4+a_1\omega_1+a_5\omega_5 \right) \otimes T_1 + a_2 \left( \omega_2 \otimes T_2 + \omega_6 \otimes T_3 \right),$$
where $a_1, a_2, a_5, b \in \mathbb{R}.$\\

\noindent \textbf{Case 3:} $n=-3.$ \\
$$A^{-3}=\left( \frac{3}{2}\frac{h}{\sqrt{6}}+b\omega_4+a_1\omega_1+a_5\omega_5 \right) \otimes T_1 + a_3 \left( \omega_3 \otimes T_2 + \omega_7 \otimes T_3 \right),$$
where $a_1, a_3, a_5, b \in \mathbb{R}.$

\subsubsection{Topology of the homogenous bundles $P_{n}$}

Recall from the beginning of this subsection \ref{ss:InvariantConnections}, that given a group homomorphism $\lambda_n : U(1) \rightarrow SO(3)$ we may construct the homogeneous bundle
$$P_{n}= SU(3) \times_{(U(1)_{k,l}, \lambda_n)} SO(3)$$
over $X_{k,l}$. In this section we compute the first Pontryagin and second Stiefel-Whitney classes of the associated vector bundle $E_n$ with respect to standard action of $SO(3)$ on $\mathbb{R}^3$. To compute its characteristic classes it will be convenient to use a lift of $E_n$ to a $Spin^c(2)=U(2)$ bundle $W_n$. Then, the adjoint bundle $\mathfrak{g}_{W_n}$ of $W_n$ splits as $\mathfrak{g}_{W_n} \cong \underline{\mathbb{R}} \oplus E_n$, where $ \underline{\mathbb{R}}$ denotes the trivial bundle. We can then compute the characteristics of $E_n$ via the Chern classes of $W_n$ as
$$w_2(E_n) = c_1(W_n)  \mod 2 , \ \  p_1(E_n) = c_1(W_n)^2 - 4 c_2(W_n). $$
To state the result we recall some facts about the cohomology ring of $X_{k,l}$, \cite{Kreck1998}. Namely that $H^2(X_{k,l}, \mathbb{Z}) \cong \mathbb{Z}$, and that the square of its generator is the generator of $H^4(X_{k,l}, \mathbb{Z}) \cong \mathbb{Z}_{k^2+l^2+kl}$. We now state and prove

\begin{lemma}\label{lem:Characteristic_Classes}
The associated homogeneous $SO(3)$-bundle $E_n$ has
$$w_2(E_n) = n \mod 2 , \ p_1(E_n)= n^2 \mod k^2+kl+l^2.$$
\end{lemma}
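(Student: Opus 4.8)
The plan is to carry out the strategy announced just above the statement: realize $E_n$ as the tracefree part of the adjoint bundle of an explicit $U(2)$-bundle $W_n$, compute $c_1(W_n)$ and $c_2(W_n)$, and feed them into the two displayed identities $w_2(E_n)=c_1(W_n)\bmod 2$ and $p_1(E_n)=c_1(W_n)^2-4c_2(W_n)$ (both sides of which are unchanged under $W_n\rightsquigarrow W_n\otimes L$, so any lift of $E_n$ will do). First I would build $W_n$. Writing $U(1)_{k,l}$ in the parameter $\theta$ for which $U(1)_{k,l}=\{\diag(e^{ik\theta},e^{il\theta},e^{im\theta})\}$, the isotropy homomorphism $\lambda_n\colon U(1)_{k,l}\to SO(3)\cong SU(2)/\{\pm\mathbf 1\}$ admits the lift $\tilde\lambda_n\colon U(1)_{k,l}\to U(2)$, $\theta\mapsto\diag(e^{in\theta},1)$: this is genuinely well defined (it is $2\pi$-periodic, whereas the more symmetric $\theta\mapsto\diag(e^{in\theta/2},e^{-in\theta/2})$ is not when $n$ is odd), and since $\diag(e^{in\theta},1)$ and $\diag(e^{in\theta/2},e^{-in\theta/2})$ differ by the central element $e^{in\theta/2}\mathbf 1\in U(2)$, they have the same image in $PU(2)\cong SO(3)$, namely $\lambda_n(\theta)$. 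Setting $W_n=SU(3)\times_{(U(1)_{k,l},\tilde\lambda_n)}\mathbb C^2$ and pushing the defining cocycle forward along $U(2)\to PU(2)$ recovers $P_n$, so $\mathfrak g_{W_n}=\underline{\mathbb R}\oplus\mathfrak{su}(W_n)$ with $\mathfrak{su}(W_n)\cong E_n$, as required.

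Next I would compute $c_1(W_n)$ and $c_2(W_n)$. Decomposing $\mathbb C^2$ into the $U(1)_{k,l}$-weight spaces of $\tilde\lambda_n$, of weights $n$ and $0$, splits $W_n\cong L_n\oplus\underline{\mathbb C}$, where $L_n=SU(3)\times_{U(1)_{k,l}}\mathbb C_{(n)}$ is the homogeneous line bundle attached to the $n$-th power of the generating character of $U(1)_{k,l}$. Hence $c_2(W_n)=0$ and $c_1(W_n)=c_1(L_n)=n\,c_1(L_1)$, so everything reduces to identifying $c_1(L_1)$ with a generator $g$ of $H^2(X_{k,l};\mathbb Z)\cong\mathbb Z$. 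This I would get from the Gysin sequence of the principal circle bundle $S^1\to SU(3)\to X_{k,l}$: since $H^1(SU(3);\mathbb Z)=H^2(SU(3);\mathbb Z)=0$, the sequence forces the Euler class of this bundle to generate $H^2(X_{k,l};\mathbb Z)$, and that Euler class is $\pm c_1(L_1)$, as $L_1$ or $L_{-1}$ is the line bundle associated with this circle bundle. (For $\gcd(k,l,m)>1$ one first passes to a finite cover, as in Remark \ref{rem:Weyl}, so that $\theta$ is the faithful parameter of $U(1)_{k,l}$; this affects neither side of the claimed identities.)

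Finally I would assemble the pieces. From $w_2(E_n)=c_1(W_n)\bmod 2=n\,g\bmod 2$ we get $w_2(E_n)$ nonzero precisely when $n$ is odd, which is the content of ``$w_2(E_n)=n\bmod 2$'' (the reduction $H^2(X_{k,l};\mathbb Z)\to H^2(X_{k,l};\mathbb Z/2)$ sends $g$ to a generator since $H^3(X_{k,l};\mathbb Z)=0$). And $p_1(E_n)=c_1(W_n)^2-4c_2(W_n)=n^2\,g^2$, which reads $p_1(E_n)=n^2\bmod(k^2+kl+l^2)$ because $g^2$ generates $H^4(X_{k,l};\mathbb Z)\cong\mathbb Z_{k^2+l^2+kl}$, the fact recalled above from \cite{Kreck1998}.

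I expect the main obstacle to be bookkeeping rather than anything conceptual: checking that $\tilde\lambda_n$ really lifts $\lambda_n$ and that no spurious twist enters when identifying the $PU(2)$-reduction of $W_n$ with $P_n$; pinning down that $c_1(L_1)$ generates $H^2$ with the correct sign, which is tied to the choice of faithful parameter of $U(1)_{k,l}$; and dispatching the $\gcd(k,l,m)>1$ case by the covering argument. Once these are settled, the splitting $W_n=L_n\oplus\underline{\mathbb C}$ together with the standard $U(2)\to SO(3)$ characteristic-class formulas makes the rest immediate.
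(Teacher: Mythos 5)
Your proposal is correct and follows essentially the same route as the paper: lift $P_n$ to a homogeneous $U(2)$-bundle $W_n$ (your $\tilde\lambda_n$ is the paper's $\mu_n$ written directly in $U(2)$ rather than in $SU(2)\times U(1)/\mathbb{Z}_2$ language) and feed $c_1(W_n)=n\,g$, $c_2(W_n)=0$ into $w_2(E_n)=c_1(W_n)\bmod 2$ and $p_1(E_n)=c_1(W_n)^2-4c_2(W_n)$. The only difference is that the paper computes the Chern classes by Chern--Weil theory from the curvature of the canonical invariant connection, whereas you obtain them from the splitting $W_n\cong L_n\oplus\underline{\mathbb C}$ and justify that $c_1(L_1)$ generates $H^2(X_{k,l};\mathbb Z)$ via the Gysin sequence of $S^1\to SU(3)\to X_{k,l}$ --- a fact the paper asserts without proof.
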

\begin{proof}
The first step towards the computation is to notice that, for any $n \in \mathbb{Z}$, there is actually a homogeneous lift of $P_{n}$ to a $Spin^c(2)=U(2)$ bundle. To see this take we identify $SU(2) \times U(1) / \mathbb{Z}_2 \cong U(2)$ by the isomorphism $[(A, e^{i\theta})] \mapsto \diag(e^{i\theta}, e^{i \theta}) A$, and it is easy to see that there is a group homomorphism $\tau:U(2) \rightarrow SO(3)$ which is simply $\tau([A, e^{i \theta}])=A \in SU(2) / \mathbb{Z}_2 \cong SO(3)$.
\begin{remark}
One other way to describe this is by considering the adjoint action of $U(2)$ on its Lie algebra. This decomposes as $\mathfrak{u}(2)= \mathbb{R} \oplus \mathfrak{so}(3)$, and $U(2)$ acts on $\mathfrak{so}(3) \cong \mathbb{R}^3$ via $SO(3)$.
\end{remark}
Then, the bundle $P_{n}$ can be homogeneously lifted to a $U(2)$ bundle if and only if there is a group homomorphism $\mu_n : U(1) \rightarrow U(2)$ such that $\lambda_n = \tau \circ \mu_n$. That is indeed the case, as we can simply check that
$$\mu_n (e^{i\theta})= \left[  \begin{pmatrix}
e^{i n \theta/2} & 0 \\
0 & e^{i n \theta/2}
\end{pmatrix} , e^{i n \theta/2}    \right] \in SU(2) \times U(1) / \mathbb{Z}_2$$
does the job. Then, the canonical invariant connection on $W_n = SU(3) \times_{(U(1)_{k,l} , \mu_n)} U(2)$ is $A^n_c = - n \frac{h}{\sqrt{6} s} \otimes \diag (i,0)$ and its curvature $F^n_c= - n \frac{dh}{\sqrt{6} s} \otimes \diag (i,0)$. Then $c_1(W_n)= [i \tr (F_n^c) ]= n [dh]/ \sqrt{6}s$ with $ [dh]/ \sqrt{6}s $ being the generator of $H^2(X_{k,l}, \mathbb{Z})$, and so $w_2(E_n) = n \mod 2$. We now turn to the computation of $p_1(E_n)$ which besides $c_1(W_n)$ also requires $c_2(W_n)$ which we can check to be zero using the formula $1/2 [\tr (F_n^c \wedge F_n^c)- \tr (F_n^c)^2 ]$. Therefore, we conclude that $p_1(E_n) = n^2 \in \mathbb{Z}_{k^2+l^2+kl}$, finishing the proof of lemma \ref{lem:Characteristic_Classes}.
\end{proof}

A short computation also yields

\begin{corollary}\label{cor:charclass}
Let $n_1=k-l$, $n_2=l-m$, $n_3=m-k$, then
\begin{eqnarray}\nonumber
w_2(E_{n_1}) & = & k-l \mod 2 \\ \nonumber
w_2(E_{n_2}) & = & k \mod 2 \\ \nonumber
w_2(E_{n_3}) & = & l \mod 2 \\ \nonumber
p_1(E_{n_1}) & = & -3kl \mod k^2+kl+l^2 \\ \nonumber
p_1(E_{n_2}) & = & -3k^2 \mod k^2+kl+l^2 \\ \nonumber
p_1(E_{n_3}) & = & -3l^2 \mod k^2+kl+l^2 
\end{eqnarray}
\end{corollary}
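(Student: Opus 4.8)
The plan is to simply specialize Lemma~\ref{lem:Characteristic_Classes} to the three values $n_1 = k-l$, $n_2 = l-m$, $n_3 = m-k$, and then rewrite the resulting expressions using the defining relation $m = -(k+l)$ together with the fact, recalled just before Lemma~\ref{lem:Characteristic_Classes}, that $H^4(X_{k,l},\mathbb{Z}) \cong \mathbb{Z}_{k^2+kl+l^2}$. There is no conceptual content beyond this: the corollary is a bookkeeping exercise once the lemma is available.

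For the Stiefel--Whitney classes, Lemma~\ref{lem:Characteristic_Classes} gives $w_2(E_{n_i}) = n_i \bmod 2$, so it suffices to check the congruences $l-m \equiv k$ and $m-k \equiv l$ modulo $2$ (the first, $k-l \equiv k-l$, being a tautology). Substituting $m = -(k+l)$ turns these into $l-m = k+2l \equiv k$ and $m-k = -(2k+l) \equiv l$, which hold since reduction mod $2$ ignores both the sign and the even multiples of $k$ and $l$.

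For the first Pontryagin classes, Lemma~\ref{lem:Characteristic_Classes} gives $p_1(E_{n_i}) = n_i^2 \bmod (k^2+kl+l^2)$. Using $m = -(k+l)$ one has $n_1 = k-l$, $n_2 = k+2l$, $n_3 = -(2k+l)$, and a direct expansion of squares yields
\begin{align*}
n_1^2 + 3kl &= (k-l)^2 + 3kl = k^2+kl+l^2, \\
n_2^2 + 3k^2 &= (k+2l)^2 + 3k^2 = 4\,(k^2+kl+l^2), \\
n_3^2 + 3l^2 &= (2k+l)^2 + 3l^2 = 4\,(k^2+kl+l^2),
\end{align*}
each divisible by $k^2+kl+l^2$. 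Hence $n_1^2 \equiv -3kl$, $n_2^2 \equiv -3k^2$, and $n_3^2 \equiv -3l^2$ modulo $k^2+kl+l^2$, which is exactly the asserted list of values.

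The only thing requiring any care — and this is the closest thing to an ``obstacle'' — is to perform the substitution $m = -(k+l)$ consistently and to remember that the mod-$2$ statements are insensitive to the signs that appear in $n_2$ and $n_3$; everything else is elementary algebra inherited from Lemma~\ref{lem:Characteristic_Classes}.
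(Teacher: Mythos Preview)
Your proof is correct and is exactly the ``short computation'' the paper has in mind: specialize Lemma~\ref{lem:Characteristic_Classes} to $n_1,n_2,n_3$, substitute $m=-(k+l)$, and reduce modulo $2$ and modulo $k^2+kl+l^2$. There is nothing more to it.
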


\section{Gauge theory on $X_{k,l}$, with $(k,l) \neq (1,1)$}\label{sec:AWG2Instantons}

This section is concerned with stating and proving the main results of our paper, namely theorem \ref{thm:AbelianInstantons} and theorem \ref{thm:IrreducibleInstantons}, which classify all invariant $G_2$-instantons with gauge groups $U(1)$ and $SO(3)$, for any $G_2$-structure $\varphi \in \mathcal{C}$ as in definition \ref{def:C}. Recall that, as proved in \cite{Cabrera1996}, for $k \neq \pm l$, $l \neq \pm m$, $m \neq \pm k$, these are in fact all the homogeneous coclosed $G_2$-structures on $X_{k,l}$. Then, in theorem \ref{thm:G2_Instanton_Obstructed} we use the classification to show that in any Aloff-Wallach space as above, there are irreducible $G_2$-instantons, with gauge group $SO(3)$, which as the $G_2$-structure varies merge into the same reducible and obstructed $G_2$-instanton. This phenomenon was expected to be possible and theorem \ref{thm:G2_Instanton_Obstructed} gives plenty of explicit examples, see for instance examples \ref{ex:(k,l)=(1,-1)} and \ref{ex:(k,l)=(1,-5)}, together with their accompanying figures \ref{fig:(k,l)=(1,-1)} and \ref{fig:(k,l)=(1,-5)}, representing the merge of the $G_2$-instantons. As a consequence of theorem \ref{thm:IrreducibleInstantons} we give in section \ref{ss:distinguish} examples of Aloff-Wallach spaces where $G_2$-instantons can be used to distinguish between the two inequivalent strictly nearly parallel $G_2$-structures. More precisely, we show that in these examples there always exist invariant and irreducible $G_2$-instantons, however these live on topologically different $SO(3)$-bundles.\\
In \ref{sss:X1-1}, we fix $(k,l)=(1,-1)$ and a nearly parallel $G_2$-structure on $X_{1,-1}$. After finding the corresponding invariant $G_2$-instantons we show that any irreducible such $G_2$-instanton is not a local minimum of the Yang-Mills functional. In fact, they are saddles of the invariant Yang-Mills functional.

\subsection{$G_2$-instantons}

Before stating the main results we introduce some quantities which will simplify the notation later on
\begin{eqnarray}\nonumber
\Gamma & = & A^2B^2(m-k)+A^2C^2(l-m)+B^2C^2(k-l) \\ \nonumber
\Delta & = & A^2B^2l+A^2C^2k+B^2C^2m. 
\end{eqnarray}
Note that for a given Aloff-Wallach space $X_{k,l}$ each of these quantities only depends on the $G_2$-structure \ref{eq:G2str} and varies continuously with it.

\subsection{Abelian case}\label{ss:Abelian_X_(k,l)}

We start below by stating the result classifying $G_2$-instantons with gauge group $U(1)$. In this case, the possible homogeneous bundles are parametrised by $n \in \mathbb{Z}$, which denotes the degree of the homomorphism $\lambda_n : U(1)_{k,l} \rightarrow U(1)$ used to constructed the bundle $Q_n=SU(3) \times_{(U(1)_{k,l}, \lambda_n)} U(1)$.

\begin{theorem}\label{thm:AbelianInstantons}(Abelian case)
Let $(k,l) \neq (1,1)$ and $A$ be a $G_2$-instanton on a line bundle over $X_{k,l}$ equipped with the $G_2$-structure \ref{eq:G2str}. Then, either:
\begin{enumerate}
\item $\Delta \neq 0$, in which case there there is a unique $G_2$-instanton in any homogeneous line bundle. For instance, if $A$ lives on the bundle associated with $\lambda_n$, its connection $1$-form is
$$A= - \frac{n}{2} \left( \frac{1}{\sqrt{6} s} h + \frac{\Gamma}{3\sqrt{2} s\Delta}\omega_4 \right).$$

\item $\Delta=0$, but $\Gamma \neq 0$ in which case $A$ lives in the trivial homogenous bundle (i.e. that associated with $\lambda_0$), and $A$ is simply one of the $1$-forms $b \omega_4$, for some $b \in \mathbb{R}$.

\item $\Delta=0$ and $\Gamma=0$, in which case there is a real $1$-parameter family of such instantons on any homogeneous line bundle.
\end{enumerate}
\end{theorem}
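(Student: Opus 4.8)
The plan is to classify the invariant $U(1)$-connections on $Q_n$ via Wang's theorem, compute their curvature from the Maurer--Cartan equations, and observe that $F_A\wedge\psi=0$ collapses to a single linear equation whose coefficients are precisely the quantities $\Gamma$ and $\Delta$ introduced before the theorem; the three alternatives then read off at once. First I would determine the invariant connections. Exactly as in the derivation of Lemma \ref{lem:ConnectionForms}, Wang's theorem identifies invariant connections on $Q_n$ with morphisms of $U(1)_{k,l}$-representations $\mathfrak{m}\to\mathfrak{u}(1)$, and since $\mathfrak{u}(1)$ carries the trivial representation any such morphism must vanish on every nontrivial weight summand of $\mathfrak{m}$. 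For $(k,l)$ not in the Weyl orbit of $(1,1)$ the only trivial summand of $\mathfrak{m}$ is $\langle X_4\rangle$; this is exactly where the hypothesis enters, since for $X_{1,1}$ there are the two further trivial summands $\langle X_1\rangle$ and $\langle X_5\rangle$, which is why that case genuinely differs and is treated separately in Theorem \ref{thm:abelianinstantonsX11}. Consequently every invariant connection on $Q_n$ has connection $1$-form $A=A^n_c+b\,\omega_4$ for a unique $b\in\mathbb{R}$, where $A^n_c$ denotes the canonical invariant connection (the obvious analogue of the $SO(3)$ one), and invariant gauge transformations are constant and hence act trivially, so $b$ is a genuine parameter.

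Next I would compute the curvature. Because $U(1)$ is abelian, $F_A=F^n_c+b\,d\omega_4$. The curvature of $A^n_c$ was already computed above and is supported on $\omega_{15},\omega_{26},\omega_{37}$, with coefficients proportional to $k-l$, $l-m$, $m-k$. For $d\omega_4$ one uses the Maurer--Cartan equations of $\mathfrak{su}(3)$: since $\omega_4$ is dual to the Cartan element $e_4$, the value $d\omega_4(e_i,e_j)$ is nonzero only when $[e_i,e_j]$ has a diagonal component, which again forces $d\omega_4$ to be a combination of $\omega_{15},\omega_{26},\omega_{37}$, now with coefficients proportional to $m$, $k$, $l$. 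Thus $F_A=c_1\omega_{15}+c_2\omega_{26}+c_3\omega_{37}$ with each $c_i$ affine-linear in $b$ (and $n$).

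Then I would impose the instanton equation. A short wedge computation from the formula for $\psi$ gives $\omega_{15}\wedge\psi=B^2C^2\,\omega_{123567}$, $\omega_{26}\wedge\psi=A^2C^2\,\omega_{123567}$ and $\omega_{37}\wedge\psi=A^2B^2\,\omega_{123567}$, every remaining term of $\psi$ vanishing against $\omega_{15}$, $\omega_{26}$ or $\omega_{37}$ because of a repeated index, and $\omega_{123567}$ spans the one-dimensional space of invariant $6$-forms. Hence $F_A\wedge\psi=(c_1B^2C^2+c_2A^2C^2+c_3A^2B^2)\,\omega_{123567}$, and substituting the expressions for the $c_i$ the bracket collapses to a nonzero multiple of $b\,\Delta-c\,n\,\Gamma$ for an explicit constant $c$ — the coefficients of $b$ and $n$ reorganising themselves precisely into $\Delta$ and $\Gamma$. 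The three cases now follow directly from this single scalar identity. If $\Delta\neq0$, it determines $b$ uniquely in terms of $n$, and tracking the constants yields the displayed connection $1$-form, proving (1). If $\Delta=0$ but $\Gamma\neq0$, the $b$-term disappears and the equation becomes $n\,\Gamma=0$, forcing $n=0$, whereas for $n=0$ it holds for every $b$; so the invariant instantons are exactly the $1$-forms $b\,\omega_4$ on the trivial bundle $Q_0$, proving (2). If $\Gamma=\Delta=0$, the equation is vacuous, so on each $Q_n$ the whole line $\{A^n_c+b\,\omega_4 : b\in\mathbb{R}\}$ consists of $G_2$-instantons, proving (3).

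I expect the only real work to be the two bookkeeping calculations: evaluating $d\omega_4$ from the structure constants with the correct normalisation, and verifying that the surviving wedge coefficients assemble into exactly $\Gamma$ and $\Delta$ (and pinning down the constant relating $\omega_{123567}$ to the chosen volume form, so that the formula in (1) comes out verbatim). There is no analytic difficulty here: on a homogeneous space the instanton equation for invariant connections is finite-dimensional, and in this case it reduces to a single linear condition.
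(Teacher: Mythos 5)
Your proposal is correct and follows essentially the same route as the paper: classify invariant connections via Wang's theorem (giving $A=A^n_c+b\,\omega_4$), compute $F_A=F^n_c+b\,d\omega_4$ supported on $\omega_{15},\omega_{26},\omega_{37}$, and reduce $F_A\wedge\psi=0$ to a single linear equation, $n\Gamma+6\sqrt{2}\,s\,b\,\Delta=0$, from which the three cases follow. The only cosmetic difference is that you wedge $F_A$ directly against the coframe expression for $\psi$ (which indeed yields $\omega_{15}\wedge\psi=B^2C^2\omega_{123567}$, etc.), whereas the paper routes the computation through the splitting $\psi=-D\,\omega_4\wedge\Omega_2+\tfrac{1}{2}\omega^2$ and the $S^1$-invariant equations \ref{eq:U(1)_Invariant_G2_Instanton}, checking that the $\omega^2$-equation holds automatically; both give the same scalar condition and hence the same classification.
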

\begin{proof}
Any Abelian $G_2$-instanton can also be interpreted as a reducible $SU(2)$-instanton. Hence, we can use the formula for the connection in the previous section. More precisely, for the instanton to be reducible we must have $a_1=a_2=a_3=0$, so
$$A^n= -\frac{n}{2 \sqrt{6} s} h + b\omega_4 .$$
Its curvature is
\begin{align*}
F^n= & F_c^n + b d \omega_4,
\end{align*}
where 
\begin{eqnarray}\nonumber
F^c_n & = & \- \frac{1}{12s^2} \left( n(k-l) \omega_{15} + n(l-m) \omega_{26}   + n(m-k)  \omega_{37} \right), \\ \nonumber
d \omega_4 & = & -\frac{1}{\sqrt{2} s} ( m \omega_{15} + k \omega_{26} + l \omega_{37} ).
\end{eqnarray}
Then we write $\psi= - D \omega_4 \wedge \Omega_2 + \frac{1}{2} \omega^2$, with $\Omega_2, \omega^2$ the pullback of differential forms on the flag manifold $\mathbb{F}_2 = SU(3)/T^2$, and determined by this relation. As in section \ref{ss:S1_Invariant_G2_Instantons}, more precisely equation \ref{eq:U(1)_Invariant_G2_Instanton}, we compute that the $G_2$-instanton equation reduces to the equations
$$(F^c_n + b \ d \omega_4) \wedge \Omega_2 =0 , \ \ (F^c_n +n b \ d \omega_4) \wedge \omega^2=0.$$
It is easy to check that $F^c_n \wedge \omega^2 =0 = d \omega_4 \wedge \omega^2$ always. We are therefore, reduced to the first equation, which turns into
$$ - n\Gamma-6\sqrt{2}s \Delta b =0,$$
with $\Gamma$, $\Delta$ are as in the beginning of this section. In particular we see that $F^c_n \wedge \Omega_2 =0$ if and only if $\Gamma =0$ and $d \omega_4 \wedge \Omega_2 =0$ if and only if $\Delta =0$. Therefore, if $\Delta \neq 0$ there is exactly one $SU(3)$-invariant instanton, whose connection form is
$$A^n= - \frac{n}{2} \left( \frac{1}{\sqrt{6} s} h + \frac{\Gamma}{3\sqrt{2} s\Delta}\omega_4 \right) \otimes T_1.$$
However, if $\Delta=0$ there are no instantons unless $n\Gamma=0$ as well, in which case there is a $1$-parameter family of instantons as we can chose $b$ arbitrarily.
\end{proof}

A few remarks are in order, related to how the existence of invariant Abelian $G_2$-instantons varies with the $G_2$-structure.

\begin{remark}\label{rem:Abelian_G2_Instantons_(k,l)}
\begin{enumerate}
\item Note that for a fixed Aloff-Wallach space $X_{k,l}$ both $\Delta$ and $\Gamma$ vary smoothly with the $G_2$-structure, and generically $\Delta \neq 0$. Note that $\Delta =0$ defines a hypersurface in the space of coclosed homogeneous $G_2$-structures.
\item Suppose that we vary the $G_2$-structure always keeping $\Gamma \neq 0$, but crossing the hypesurface defined by $\Delta =0$. We see that the instantons on the bundles $Q_n$, for $n \neq 0$, `disappear'  when $\Delta=0$ and `reappear' on the other side of the hypersurface.
\item For any $(k,l)$ it is easy to find examples where the situation $\Delta=0=\Gamma$ occurs. These equations, i.e. $\Delta= 0$ and $\Gamma=0$, can also be written as
\begin{eqnarray}\nonumber
A^2 (B^2 - C^2) l & = & B^2 (C^2 -A^2)k \\ \nonumber
C^2 (A^2-B^2) (l-k) & = & A^2 (B^2 - C^2) (k+l).
\end{eqnarray}
For example, it is easy to see that any $G_2$-structure having $A^2=B^2=C^2$ satisfies these equations.
\item The conditions $\Delta=0$ and $\Gamma=0$ are independent of scaling the metric as expected.
\item Both $\Gamma$ and $\Delta$ are independent of $D$. This can be understood directly from the proof, as follows. Recall that $(\omega, \Omega_2)$ induce an $SU(3)$-structure on the flag $\mathbb{F}_2 = SU(3)/T^2$. Then, it follows from the proof of theorem \ref{thm:AbG2instYM} that $F^c_n \wedge \Omega_2$ and $d \omega_4 \wedge \Omega_2$ are proportional to $\Gamma$ and $\Delta$ respectively. Notice that both $F^c_n$ and $d\omega_4$ are the pullback of $2$-forms from $\mathbb{F}_2$. Hence $\Gamma$, $\Delta$ measure the components of these $2$-forms in $\Lambda^{2,0}$ with respect to the complex structure on $\mathbb{F}_2$ induced by $\Omega_2$. In particular, the cannonical connection $A^c_n$, which is induced from a connection on $\mathbb{F}_2$, is pHYM with respect to $(\omega, \Omega_2)$ if and only if $\Gamma=0$.  
\item Any Abelian connection can be written as a direct sum of connections with gauge group $U(1)$, so there is no loss of generality in working with gauge group $U(1)$ when investigating Abelian connections. 
\end{enumerate}
\end{remark}

\subsection{Non-Abelian case}\label{ss:Non_Abelian_X_(k,l)}

In this section we prove theorem \ref{thm:IrreducibleInstantons}, this classifies invariant and irreducible $G_2$-instantons on $SO(3)$-bundles, with respect to the $G_2$-structures $\varphi \in \mathcal{C}$ on the $X_{k,l}$, for $k \neq \pm l$, $l \neq \pm m$, $m \neq \pm k$. Recall that in these cases, the $G_2$-structures in $\mathcal{C}$ are in fact all the homogeneous coclosed $G_2$-structures on $X_{k,l}$. Then we prove theorem \ref{thm:G2_Instanton_Obstructed} which yields examples of irreducible $G_2$-instantons that, as the $G_2$-structure varies, merge into the same reducible and obstructed $G_2$-instanton (see also examples \ref{ex:(k,l)=(1,-1)}, \ref{ex:(k,l)=(1,-5)}).\\
The reason for focusing our attention on irreducible $G_2$-instantons is that any reducible one is already taken into consideration by theorem \ref{thm:AbelianInstantons}. Recall, from the previous section, that the homogenous $SO(3)$ bundles are also parametrized by an integer $n \in \mathbb{Z}$ and we denote them by $P_n$. 

\begin{theorem}(Non-Abelian case)\label{thm:IrreducibleInstantons}
Let $(k,l) \neq (1,1)$ and $X_{k,l}$ be an Aloff-Wallach space equipped with one of the $G_2$-structures $\varphi$ in equation \ref{eq:G2str} and $n \in \mathbb{Z}$. Then, irreducible and invariant $G_2$-instantons on $P_n$ exist if and only if 
\begin{enumerate}
\item $n=k-l$ and  $\sigma_1(\varphi)= 3 \left( \frac{m}{2} - s \frac{AD}{BC} \right) \Delta + \frac{k-l}{2} \Gamma > 0$; in which case the instantons have $a_2=a_3=0$ and 
\begin{align*} 
&a_1^2 = \frac{1}{12 B^2C^2s^2} \left(  3 \left( \frac{m}{2} - s \frac{AD}{BC} \right) \Delta + \frac{k-l}{2} \Gamma  \right) \\&
b = \frac{1}{\sqrt{2}} \left( \frac{m}{2s} - \frac{AD}{BC} \right).
\end{align*}

\item $n=l-m$ and $\sigma_2(\varphi)= 3 \left( \frac{k}{2} - s \frac{BD}{AC} \right) \Delta + \frac{l-m}{2} \Gamma > 0$; in which case the instantons have $a_1 =a_3=0$ and
\begin{align*} 
&a_2^2 = \frac{1}{12A^2C^2s^2} \left( 3 \left( \frac{k}{2} - s \frac{BD}{AC} \right) \Delta + \frac{l-m}{2}\Gamma  \right), \\&
b = \frac{1}{\sqrt{2}} \left( \frac{k}{2s} - \frac{BD}{AC} \right).
\end{align*}

\item $n=m-k$ and $ \sigma_3(\varphi) = 3 \left( \frac{l}{2} - s \frac{CD}{AB} \right) \Delta + \frac{m-k}{2} \Gamma > 0 $; in which case the instantons have $a_1=a_2=0$,
\begin{align*} 
&a_3^2 = \frac{1}{12B^2A^2s^2} \left(  3 \left( \frac{l}{2} - s \frac{CD}{AB} \right) \Delta + \frac{m-k}{2} \Gamma  \right) \\&
b = \frac{1}{\sqrt{2}} \left( \frac{l}{2s} - \frac{CD}{AB} \right).
\end{align*}
\end{enumerate}
\end{theorem}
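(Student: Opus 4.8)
The plan is to convert the $G_2$-instanton equation into an explicit algebraic system in the parameters of the invariant connection $1$-forms classified in section \ref{sss:Case_Splitting_kl}, and then to solve it case by case. First I would compute the curvature: for each of the seven cases of section \ref{sss:Case_Splitting_kl}, substitute the connection $1$-form $A^n$ into $F_{A^n} = dA^n + \tfrac12[A^n\wedge A^n]$, using the Maurer-Cartan equations of $\mathfrak{su}(3)$ in the basis $\{e_1,\dots,e_7,H\}$ (equivalently, the structure constants already implicit in the formulas for $F^c_c$, $d\varphi$ and $d\psi$) to expand $d\omega_i$ and $dh$. The bracket relations $[T_i,T_j]=2\epsilon_{ijk}T_k$ produce the nonlinear terms: an $a_i^2$-contribution to the $T_1$-slot from $[\psi_i\wedge\psi_i]$, and $b\,a_i$-type contributions to the $T_2,T_3$-slots from bracketing the $T_1$-part of $A^n$ (the $h$- and $\omega_4$-terms) with $\psi_i$. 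The result is a left-invariant $\mathfrak{so}(3)$-valued $2$-form whose coefficients are quadratic polynomials in $(a_1,a_2,a_3,b)$, depending on $(A,B,C,D,k,l,m,n)$, and in Cases 4--6 also on the angle parameters.

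Next I would impose $F_{A^n}\wedge\psi = 0$, equivalently $\pi_7(F_{A^n}) = 0$. By $SU(3)$-invariance this is a finite system of polynomial equations. As in the proof of Theorem \ref{thm:AbelianInstantons} it is convenient to reduce along the $S^1$ generated by $e_4$: writing $\psi = -D\,\omega_4\wedge\Omega_2 + \tfrac12\omega^2$ with $\Omega_2,\omega$ pulled back from the flag manifold $\mathbb{F}_2 = SU(3)/T^2$, formula \eqref{eq:U(1)_Invariant_G2_Instanton} splits the equation into an $\omega_4\wedge(\cdot)$ part and a $(\cdot)$ part. The $\omega^2$-part is satisfied identically in each case, just as $F^c_n\wedge\omega^2 = d\omega_4\wedge\omega^2 = 0$ in the Abelian setting, so what remains is the $\Omega_2$-equation together with the equations coming from the $T_2,T_3$-slots of the curvature.

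Solving, case by case: Case 0 forces $a_1=a_2=a_3=0$, giving reducible connections already covered by Theorem \ref{thm:AbelianInstantons}. In Cases 1, 2, 3 exactly one $a_i$ may be nonzero; the $T_2,T_3$-slot equations are proportional to a single equation linear in $b$ that fixes $b$ (for instance $b = \tfrac1{\sqrt2}(\tfrac{m}{2s}-\tfrac{AD}{BC})$ in Case 1), and substituting into the $T_1$-slot equation yields a single scalar equation of the form $(\text{positive constant})\cdot a_i^2 = \sigma_i(\varphi)$, which is precisely the formula in the statement; hence an irreducible invariant instanton on $P_{k-l}$ (resp.\ $P_{l-m}$, $P_{m-k}$) exists iff $\sigma_1(\varphi)>0$ (resp.\ $\sigma_2(\varphi)>0$, $\sigma_3(\varphi)>0$), and every solution with $a_i\neq 0$ is irreducible since its holonomy algebra is all of $\mathfrak{so}(3)$. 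Cases 4--6 arise only when one of $k,l,m$ vanishes (they require two of $k-l,l-m,m-k$ to coincide), and there two $a_i$'s are a priori allowed, modulated by an angle $\alpha$ or $\beta$; but the cross term $[\psi_i\wedge\psi_j]$ in the curvature, wedged with $\psi$, contributes a term proportional to $a_i a_j$ times a function of the angle and of $(A,B,C,D)$ that cannot vanish identically, which forces $a_i a_j = 0$ and reduces matters to one of Cases 1--3. So no further irreducible solutions occur, the three families in the statement are exactly the irreducible invariant $G_2$-instantons, and Lemma \ref{lem:Characteristic_Classes} with Corollary \ref{cor:charclass} identify the bundles $P_{k-l},P_{l-m},P_{m-k}$ on which they live.

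The main obstacle is the computational core of Steps 1 and 2: expanding $F_{A^n}\wedge\psi$ and isolating an independent set of scalar equations, and, in Cases 4--6, checking carefully that the angle parameter cannot be tuned to cancel the obstruction term. I expect these to be entirely mechanical once set up, so the difficulty is one of organization rather than of new ideas; the key structural simplification — that the $S^1$-reduction along $\omega_4$ kills the $\omega^2$-part of the equation and leaves a small system in $(a_i,b)$ — is inherited directly from the Abelian analysis, and since the $G_2$-structures in $\mathcal{C}$ need not be nearly parallel one cannot shortcut any of this via Yang-Mills or flow arguments.
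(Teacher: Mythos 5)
Your overall route is the same as the paper's: take the case-by-case invariant connection forms of \ref{sss:Case_Splitting_kl}, compute $F^n=F^n_c+d_{A^n_c}(A^n-A^n_c)+\tfrac12[A^n-A^n_c\wedge A^n-A^n_c]$ via Maurer--Cartan, impose $F^n\wedge\psi=0$, and solve the resulting small polynomial system in $(a_i,b)$; your Cases 0--3 (reducibles covered by theorem \ref{thm:AbelianInstantons}; the $T_2,T_3$-slot equation fixing $b$, then the $T_1$-slot equation giving $a_i^2$ proportional to $\sigma_i$) match the paper's computation, and the positivity criterion and bundle identification come out the same way.

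However, your treatment of Cases 4--6 contains a genuine error. The cross term $a_ia_j[\psi_i\wedge\psi_j]$ does appear in the curvature, but it lies entirely in the $T_1$-slot and, in Case 4 for instance, is a multiple of $a_2a_3\left(\sin\beta\,(\omega_{23}+\omega_{67})+\cos\beta\,(\omega_{27}+\omega_{36})\right)$; both $\omega_{23}+\omega_{67}$ and $\omega_{27}+\omega_{36}$ wedge to zero against $\psi$ (e.g.\ the $ABCD\,\omega_{4567}$ and $-ABCD\,\omega_{2345}$ contributions cancel), so no $a_ia_j$ term survives in $F^n\wedge\psi$. This is consistent with the system the paper actually obtains in Case 4, whose only quadratic term is $B^2a_3^2+C^2a_2^2$ in the $T_1$-equation. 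Hence your mechanism ``the cross term forces $a_ia_j=0$'' is simply not available. The reduction to Cases 1--3 instead comes from the $T_2,T_3$-slot equations, which factor as $a_2\cdot\ell_2(b)=0$ and $a_3\cdot\ell_3(b)=0$ with $\ell_2,\ell_3$ affine in $b$: having both $a_2\neq0$ and $a_3\neq0$ would force $b$ to be a common root of $\ell_2$ and $\ell_3$, which fails for generic $(A,B,C,D)$ (their roots agree only on a proper subvariety of $\mathcal{C}$ -- a point you should make explicit, since it is also where the paper's own argument is terse); once one $a_i$ vanishes, a constant gauge rotation removes the angle parameter and you are back in Case 2 or 3. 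A smaller point: your claim that the $\omega^2$-part of the reduced equation \ref{eq:U(1)_Invariant_G2_Instanton} is ``satisfied identically'' is only true for the basic (Abelian) part of the curvature; for the $T_2,T_3$ components the horizontal terms such as $\omega_{23},\omega_{67}$ do contribute, and they are precisely what produces the $b$-fixing equations -- you use these equations anyway, so this is a matter of phrasing rather than substance, unlike the Cases 4--6 step.
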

\begin{proof}
Let $A^n$ be an irreducible, invariant $G_2$-instanton on $P_n$ over $X_{k,l}$. In order to compute the instanton equations we must compute its curvature $F^n$ first. This may be found by the formula $$F^n=F^n_c + d_{A^n_c}(A^n-A^n_c)+\frac{1}{2}[A^n-A^n_c,A^n-A^n_c],$$
and the Maurer-Cartan equations. Our strategy for finding instantons will be simply to solve the equations $F^n \wedge \psi =0$ for the $a_i$ and $b$ in each of the cases listed above.\\

\textbf{Case 0} $n \neq k-l, l-m, m-k$: Here  $a_1=a_2=a_3=0$, so $A^n$, is always reducible and we immediately deduce the last bullet in the statement. We also remark that the $G_2$-instantons arising from this case are precisely those from theorem \ref{thm:AbelianInstantons}.\\

\textbf{Case 1} $n=k-l$: Here $a_2=a_3=0$, and
$$A^{k-l}=\left(  -\frac{k-l}{2 \sqrt{6} s} h + b\omega_4 \right) \otimes T_1 + a_1 \left(\omega_1 \otimes T_2 + \omega_5 \otimes T_3 \right),$$
whose curvature is
\begin{align*}
F^{k-l}= &\ - \frac{1}{12 s^2}\bigg( \left( (k-l)^2 + 6 \sqrt{2}  s m b - 24 s^2 a^2_1 \right) \omega_{15} +\left( (k-l)(l-m) + 2\sqrt{6} s k b \right) \omega_{26}  \\
&\ - \left( (k-l)(m-k)+ 2 \sqrt{6} s l b \right) \omega_{37} \bigg) \otimes T_1 \\
&\ + \frac{a_1}{\sqrt{2}} \left(  \omega_{67}  -\omega_{23} + \left( \frac{m}{s} - 2\sqrt{2}b \right) \omega_{45} \right) \otimes T_2 \\
&\ + \frac{a_1}{\sqrt{2}} \left( \omega_{27} - \omega_{36} + \left( \frac{m}{s} - 2\sqrt{2}b \right) \omega_{14}  \right) \otimes T_3.
\end{align*}
The equations resulting from $F^{k-l} \wedge \psi =0$ are:
\begin{align*}
& 6 \sqrt{2} s \Delta b - 24 B^2C^2s^2 a^2_1 +  \left( k-l \right)\Gamma = 0, \\
&\ a_1 BC \left( 2ADs+BC(2\sqrt{2}sb-m) \right) = 0.
\end{align*}
Hence, if $a_1=0$ we obtain the same reducible instanton as in case 0 and theorem \ref{thm:AbelianInstantons}, while if $a_1 \neq 0$, the solutions satisfy
\begin{align*} 
&a_1^2 = \frac{1}{12 B^2C^2s^2} \left( 3 \left( \frac{m}{2} - s \frac{AD}{BC} \right) \Delta + \frac{k-l}{2} \Gamma  \right) \\&
b = \frac{1}{\sqrt{2}} \left( \frac{m}{2s} - \frac{AD}{BC} \right).
\end{align*}
Therefore, in this case the existence of $SU(3)$-invariant irreducible instantons depends on the the sign of $\sigma_1= 3 \left( \frac{m}{2} - s \frac{AD}{BC} \right) \Delta + \frac{k-l}{2} \Gamma $.\\

\textbf{Case 2:} $n=l-m.$ As this case is very similar to case 1, we will omit the details. We must have $a_1 =a_3=0$ and if $a_2 \neq 0$, solutions to $F^{l-m} \wedge \psi =0$ must satisfy
\begin{align*} 
&a_2^2 = \frac{1}{12A^2C^2s^2} \left( 3 \left( \frac{k}{2} - s \frac{BD}{AC} \right) \Delta + \frac{l-m}{2}\Gamma  \right), \\&
b = \frac{1}{\sqrt{2}} \left( \frac{k}{2s} - \frac{BD}{AC} \right).
\end{align*}
The existence of solutions depends on the sign of $\sigma_2=3 \left( \frac{k}{2} - s \frac{BD}{AC} \right) \Delta + \frac{l-m}{2}\Gamma $.\\

\textbf{Case 3:} $n=m-k.$ Again, we will omit the details. Now $a_1=a_2=0$ and if $a_3 \neq 0,$ the equation $F^{l-m} \wedge \psi =0$ gives
\begin{align*} 
&a_3^2 = \frac{1}{12B^2A^2s^2} \left(  3 \left( \frac{l}{2} - s \frac{CD}{AB} \right) \Delta + \frac{m-k}{2} \Gamma  \right) \\&
b = \frac{1}{\sqrt{2}} \left( \frac{l}{2s} - \frac{CD}{AB} \right).
\end{align*}
The existence of solutions depends on the sign of $ \sigma_3 =  3 \left( \frac{l}{2} - s \frac{CD}{AB} \right) \Delta + \frac{m-k}{2} \Gamma $.\\

\textbf{Case 4:} $n=m-k=l-m,$ and so $n=l=-k$. Recall that in this case we have an angle parameter $\beta$. Then, the equation $F^l \wedge \psi = 0$ becomes
\begin{align*} 
& 6\sqrt{2} s \Delta b- 24A^2s^2(B^2a^2_3+C^2a^2_2) + l \Gamma = 0 \\&
a_2   \left( 2BDs+AC(2\sqrt{2}sb+l) \right) = 0 \\&
a_3 \sin(\beta) \left( 2CDs+AB(2\sqrt{2}sb-l) \right) = 0 \\&
a_3 \cos(\beta) \left( 2CDs+AB(2\sqrt{2}sb-l) \right) = 0.
\end{align*}
squaring and summing the last two equations we are left with
$$a_3 ( 2CDs+AB(2\sqrt{2}sb-l) )=0.$$
This together with the second equation then implies that either $a_3=0$ or $a_2=0$, in which case we can then use an invariant gauge transformation to set $\beta=0$. We have then reduced this case to the cases 2 and case 3 above. In particular, the existence of $G_2$-instantons is determined by the signs of $\sigma_3$ and $\sigma_2$ (note that here we have $l=-k$). \\

\textbf{Cases 5 and 6:} These cases exhibit the same phenomena as in the last one and so reduce to the cases 1,2 and 4 above.
\end{proof}

\begin{remark}
Fix $X_{k,l}$ and the bundle $P_{k-l}$, then the first bullet in theorem \ref{thm:IrreducibleInstantons} shows that for a $G_2$-structure $\varphi$ so that $\sigma_1(\varphi)>0$ there are two irreducible $G_2$-instantons. In addition, we also have a reducible $G_2$-instanton given by theorem \ref{thm:AbelianInstantons} (with $n=k-l$). Varying $\varphi$ so that $\sigma_1(\varphi) \searrow 0$ the two irreducible, invariant $G_2$-instantons existent when $\sigma_1>0$ merge with the reducible Abelian $G_2$-instanton from theorem \ref{thm:AbelianInstantons}. Indeed, it is easy to check that when $\sigma_1=0$ (and $\Delta \neq 0$) then $a_1=0$ and $b=-\frac{n \Gamma}{6 \sqrt{2} s \Delta}$. We shall see below that the resulting $G_2$-instanton is obstructed. From the second and third bullet in the statement, a similar phenomena holds on the bundles $P_{l-m}$ and $P_{m-k}$.
\end{remark}

\begin{theorem}\label{thm:G2_Instanton_Obstructed}
Let $n =k-l$, and suppose $\lbrace \varphi(s) \rbrace_{s \in \mathbb{R}}$ is a continuous family of homogeneous, coclosed $G_2$-structures such that $\sigma_1(\varphi(s)) >0$, for $s<0$ and $\sigma_1(\varphi(s))<0$, for $s>0$. Then, as $s \nearrow 0$ the two irreducible $G_2$-instantons on $P_{n}$ from theorem \ref{thm:IrreducibleInstantons} merge and become the same reducible and obstructed $G_2$-instanton when $s \geq 0$.
\end{theorem}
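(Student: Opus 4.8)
The plan is to follow the explicit families of invariant $G_2$-instantons provided by Theorems~\ref{thm:IrreducibleInstantons} and~\ref{thm:AbelianInstantons} along the path $\varphi(s)$, identify the limit of the two irreducible branches, and then certify obstructedness of that limit by a finite-dimensional linearisation. To avoid a notational clash I write $\sfs:=\sqrt{k^2+l^2+m^2}/\sqrt6$ for the constant denoted $s$ in Section~\ref{ss:AWgeometry}, and $A(s),B(s),C(s),D(s),\Gamma(s),\Delta(s)$ for the data of $\varphi(s)$; as in the remark preceding the statement I assume $\Delta(0)\neq 0$ (the degenerate case $\Delta(0)=0$ forces $\Gamma(0)=0$ since $k\neq l$, and then $\varphi(0)$ carries a whole $1$-parameter family of invariant reducible instantons on $P_{k-l}$ by Theorem~\ref{thm:AbelianInstantons}(3), which is not the situation envisaged here).

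First I would settle the merge and the picture for $s\geq 0$. For $s<0$ we have $\sigma_1(\varphi(s))>0$, so Theorem~\ref{thm:IrreducibleInstantons}(1) gives the two invariant irreducible $G_2$-instantons $A^{\pm}(s)$ on $P_{k-l}$ over $\varphi(s)$; in the parametrisation of Section~\ref{sss:Case_Splitting_kl} they have $a_1=\pm a_1(s)$ with $a_1(s)^2$ a positive structure-dependent multiple of $\sigma_1(\varphi(s))$, and $b=b(s)=\tfrac1{\sqrt2}\bigl(\tfrac{m}{2\sfs}-\tfrac{A(s)D(s)}{B(s)C(s)}\bigr)$. Since these quantities depend continuously on $s$ and $\sigma_1(\varphi(s))\searrow 0$ as $s\nearrow 0$, we get $a_1(s)\to 0$ and $b(s)\to b_0:=\tfrac1{\sqrt2}\bigl(\tfrac{m}{2\sfs}-\tfrac{A(0)D(0)}{B(0)C(0)}\bigr)$, so both $A^{\pm}(s)$ converge to the invariant connection $A_0$ on $P_{k-l}$ with $a_1=0$, $b=b_0$. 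Now $\sigma_1(\varphi(0))=0$ reads $3\bigl(\tfrac m2-\sfs\tfrac{A(0)D(0)}{B(0)C(0)}\bigr)\Delta(0)+\tfrac{k-l}{2}\Gamma(0)=0$; solving it for $\tfrac m2-\sfs\tfrac{A(0)D(0)}{B(0)C(0)}$ and substituting shows $b_0=-\tfrac{(k-l)\Gamma(0)}{6\sqrt2\,\sfs\,\Delta(0)}$, which is exactly the value of $b$ for the Abelian $G_2$-instanton on $P_{k-l}$ in Theorem~\ref{thm:AbelianInstantons}(1) with $n=k-l$. Thus the two irreducible instantons merge, as $s\nearrow 0$, into the reducible $A_0$. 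For $s\geq 0$ near $0$ we have $\sigma_1(\varphi(s))\leq 0$, so Theorem~\ref{thm:IrreducibleInstantons}(1) gives no invariant irreducible $G_2$-instanton on $P_{k-l}$, while Theorem~\ref{thm:AbelianInstantons}(1) (using $\Delta(s)\neq 0$ near $s=0$) gives the unique invariant reducible one, specialising to $A_0$ at $s=0$; so for $s\geq 0$ the only invariant $G_2$-instanton on $P_{k-l}$ is this reducible $A_0$.

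Next I would prove that $A_0$ is obstructed at the merge parameter. Since all data are $SU(3)$-homogeneous and $SU(3)$ is compact, the monopole deformation complex \ref{eq:ComplexMonopole} of $(A_0,0)$ may be computed on $SU(3)$-invariant forms, and $(\coker d_2)^{SU(3)}=\coker(d_2^{\mathrm{inv}})$; after the gauge fixing of Section~\ref{sss:Case_Splitting_kl} the invariant connections on $P_{k-l}$ modulo invariant gauge form the plane $\{(a_1,b)\}$, and the $G_2$-instanton equation over $\varphi(s)$ is, as in the proof of Theorem~\ref{thm:IrreducibleInstantons}(1), the pair $E_1(a_1,b)=6\sqrt2\,\sfs\,\Delta(s)\,b-24B(s)^2C(s)^2\sfs^2a_1^2+(k-l)\Gamma(s)=0$ and $E_2(a_1,b)=a_1\,B(s)C(s)\bigl(2A(s)D(s)\sfs+B(s)C(s)(2\sqrt2\,\sfs\,b-m)\bigr)=0$. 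The reducible instanton over $\varphi(s)$ is $(a_1,b)=(0,b_0(s))$ with $b_0(s)=-\tfrac{(k-l)\Gamma(s)}{6\sqrt2\,\sfs\,\Delta(s)}$; substituting and using the definition of $\sigma_1$ one finds that at $(0,b_0(s))$ the parenthetical factor of $E_2$ — and hence the determinant of the Jacobian of $(E_1,E_2)$ — equals $\sigma_1(\varphi(s))$ times a quantity nonvanishing near $s=0$. At $s=0$, where $\sigma_1(\varphi(0))=0$, this Jacobian therefore drops rank: its kernel is one-dimensional, spanned by the $\partial_{a_1}$-direction, which represents a genuine infinitesimal $G_2$-instanton deformation of $A_0$ since the infinitesimal gauge action $\im d_1^{\mathrm{inv}}$ vanishes at the reducible connection $A_0$; and its cokernel is one-dimensional. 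A nonzero cokernel of the reduced linearisation is exactly the statement that $(A_0,0)$ is obstructed in the sense of Proposition~\ref{prop:Rigid}; equivalently, $A_0$ carries a nontrivial infinitesimal deformation that fails to integrate, consistent with $A_0$ being isolated in the invariant moduli over $\varphi(0)$. (For $s\neq 0$ the Jacobian is invertible, so the reducible instanton persists beyond $s=0$ but is unobstructed there; obstructedness occurs precisely at the merge.)

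I expect this last step to be the main obstacle, and it is conceptual rather than computational: one must justify that the finite-dimensional reduction $(a_1,b)\mapsto(E_1,E_2)$ genuinely computes the relevant part of the deformation complex at the \emph{reducible} connection $A_0$ — that the remaining invariant component of $F_A\wedge\psi$ vanishes identically on the gauge slice (which follows from $U(1)$-equivariance), that the infinitesimal gauge action is trivial there, and that ``nonzero cokernel of the reduced linearisation'' is the same as ``$H^2\neq 0$'' in the sense of Proposition~\ref{prop:Rigid} and the discussion around \ref{eq:ComplexMonopole}. Steps one and two are direct substitutions into the formulas already established.
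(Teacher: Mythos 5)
Your proposal is correct and follows essentially the same route as the paper: the paper also reduces to the two-dimensional invariant slice $(a_1,b)$ and shows that the linearisation $a\mapsto \ast(d_{A}a\wedge\psi)$ on invariant $1$-forms — which is exactly your Jacobian of $(E_1,E_2)$ up to row scaling — has determinant a nonzero multiple of $\sigma_1$ at the reducible connection, using coclosedness (self-adjointness of $L$) to identify its kernel with the cokernel, i.e.\ with obstructedness. Your merge argument and the implicit assumption $\Delta(0)\neq 0$ likewise match the remark preceding the theorem in the paper.
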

\begin{proof}
Recall that an invariant connection on $P_{k-l}$ can be written as $A=A^n_c + b \omega_4 \otimes T_1 + a_1(\omega_1 \otimes T_2 + \omega_5 \otimes T_3 )$. Similarly, an invariant $1$-form with values in the adjoint bundle can be written as $a=f \omega_4 \otimes T_1 + g(\omega_1 \otimes T_2 + \omega_5 \otimes T_3 ) $, for some $f,g \in \mathbb{R}$. Using these it is easy to compute 
\begin{eqnarray}\nonumber
d_A a & = & \left( x d \omega_4 + 4 a_1 y \omega_1 \wedge \omega_5 \right) \otimes T_1 \\ \nonumber
& & + (y  d \omega_1 + 2(b y + x a_1) \omega_5 \wedge \omega_4) \otimes T_2 \\ \nonumber
& & + (y  d \omega_5 - 2(b y + x a_1) \omega_1 \wedge \omega_4) \otimes T_3 .
\end{eqnarray}
We are now ready to find the invariant Lie algebra valued $1$-forms $a$ which lie in the cokernel of the deformation operator of the $G_2$-instanton equation $L( \cdot)= \ast (d_A \cdot \wedge \psi)$. As the $G_2$-structure is coclosed $L$ is self-adjoint and we can identify the cokernel with its own kernel. Hence $a \in \ker(L)$ if and only if $d_A a \wedge \psi=0$, which we compute to be equivalent to
\begin{eqnarray}
\sqrt{2} \Delta x -8B^2 C^2 s a_1 y & = & 0 \\
4 B C s a_1 x + \left( \sqrt{2} \left( 2 \frac{AD}{BC} s - m \right) + 4 s b \right) B C y & = & 0.
\end{eqnarray}
Hence, there is a nonzero solution $(x,y)$ if and only if the linear operator in the left hand side is not invertible, i.e. its determinant vanishes
\begin{equation}
\det=32 B^3 C^3 s^2 a^2 + (4AD s - 2 BC m + 4 \sqrt{2} BC bs) \Delta =0.
\end{equation}
Inserting into the equation above the formulas in theorem \ref{thm:IrreducibleInstantons} for the reducible instantons when $n=k-l$ we obtain
$$\det = 8BC \sigma_1/3,$$
which vanishes if and only if $\sigma_1=0$.
We have thus proved that as the instantons from theorem \ref{thm:IrreducibleInstantons} on $P_{k-l}$ merge, when $\sigma_1=0$ they become reducible and obstructed before disappearing.
\end{proof}

\begin{remark}
A similar statement to theorem \ref{thm:G2_Instanton_Obstructed} holds for $n=l-m$ and $n=m-k$, with $\sigma_1$ replaced by $\sigma_2$ and $\sigma_3$ respectively.
\end{remark}

Here are two examples of this phenomenon.

\begin{example}\label{ex:(k,l)=(1,-1)}
On the Aloff-Wallach space $X_{1,-1}$ consider the $G_2$-structures given by $B=1$, $C=1$, $D=1$ with $A$ allowed to vary freely in order to make $\sigma_1$ change sign. Then, as $A$ varies the condition for irreducible $G_2$-instantons on $P_{2}$ to exist is that $\sigma_1(\varphi)= 2(1-A^2)$ be positive, this happens if and only if $A^2 <1$. See figure \ref{fig:(k,l)=(1,-1)} for a plot of $a_1$ (the ``irreducible part'' of the connections) as $A$ varies. There one can clearly see that the irreducible $G_2$-instantons merge into the same reducible and obstructed (by theorem \ref{thm:G2_Instanton_Obstructed}) $G_2$-instanton.
\begin{figure}[]
\centering
\includegraphics[scale=0.4]{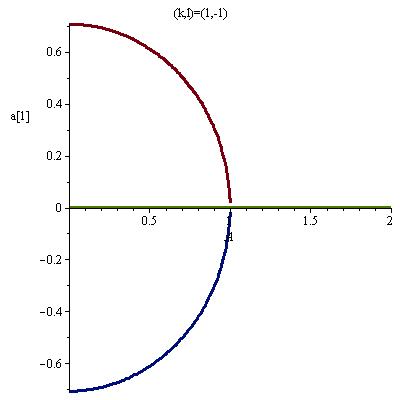}
\caption{\label{fig:(k,l)=(1,-1)} Instantons on $P_2$ over $X_{1,-1}$}
\end{figure}
\end{example}

\begin{example}\label{ex:(k,l)=(1,-5)}
Similarly we consider $G_2$-intantons on $P_{3}$ over $X_{1,-5}$, equipped with the $G_2$-structures having $B=C=D=1$. In this case the existence of irreducible $G_2$-instantons is controlled by the positivity of $\sigma_1(\varphi)= (A^2-1)(12 \sqrt{7} A-42)$, which is positive if and only if $A^2<1$ or $A>\sqrt{7}/2$. Figure \ref{fig:(k,l)=(1,-5)} below illustrates the two irreducible $G_2$-instantons merging into the same reducible and obstructed $G_2$-instanton.
\begin{figure}[]
\centering
\includegraphics[scale=0.4]{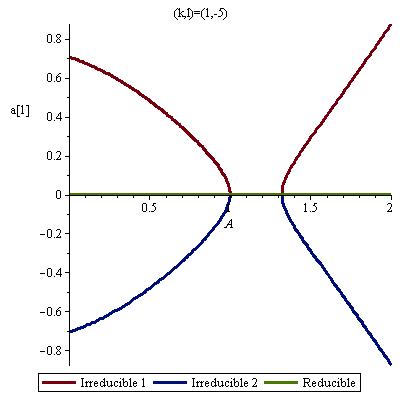}
\caption{\label{fig:(k,l)=(1,-5)} Instantons on $P_6$ over $X_{1,-5}$}
\end{figure}
\end{example}

\begin{remark}
The phenomenon described above can be interpreted as the $G_2$ analogue of a family of stable holomorphic bundles in a K\"ahler manifold, that become polystable as either the K\"ahler metric or the complex structure varies, see for example \cite{Anderson2011} and \cite{Anderson2010}. We thank Mark Stern for these references.
\end{remark}


\subsection{Distinguishing strictly nearly parallel structures}\label{ss:distinguish}

Suppose that $k \neq \pm l, l \neq \pm m, m \neq \pm k$. As remarked in section \ref{sec:Aloff-Wallach}, it is shown in \cite{Cabrera1996} that the system \ref{eq:nearlyparallelG2eqns} yields two inequivalent solutions $\varphi^{np_1}, \varphi^{np_2} \in \mathcal{C}$, which are strictly nearly parallel. In this section we will give examples of $X_{k,l}$ where the $G_2$-instantons can be used to distinguish between $\varphi^{np_1}$ and $\varphi^{np_2}$. More precisely, we shall prove that in many examples of $k,l$ the structures $\varphi^{np_1}$ and $\varphi^{np_2}$ do admit invariant and irreducible $G_2$-instantons with gauge group $SO(3)$. However, the $G_2$-instantons live on topologically different $SO(3)$-bundles.\\
To fix notation, let $\varphi^{+}$ denote the solution of \ref{eq:nearlyparallelG2eqns} that satisfies $C(\varphi^{+}) >0, D(\varphi^{+}) >0$, and let $\varphi^{-}$ denote the solution satisfying $C(\varphi^{-})>0,D(\varphi^{-})<0.$ While it is possible to solve the equations \ref{eq:nearlyparallelG2eqns} symbolically, the resulting formulae are extremely unwieldy, so we will instead just write decimal approximations.

\begin{example}
($k=1, l=2$) On $X_{1,2}$ we find that $\varphi^{+}$ is determined by
$$A=2.82249, \ B=2.29632, \ C=1.79654, \ D=2.49609,$$
so that
$$\sigma_1(\varphi^{+})=-694.91837, \ \sigma_2(\varphi^{+})=-357.13002, \ \sigma_3(\varphi^{+})=102.96860.$$
On the other hand, $\varphi^{-}$ satisfies
$$A=1.69915, \ B=2.63936, \ C=2.72083, \ D=-1.72713,$$
so that
$$\sigma_1(\varphi^{-})=257.21323, \ \sigma_2(\varphi^{-})=-623.28938, \ \sigma_3(\varphi^{-})=-676.14197.$$
Hence, theorem \ref{thm:IrreducibleInstantons} implies that for $\varphi^{+}$ irreducible, invariant $G_2$-instantons exist only on the bundle $P_{-4},$ while for $\varphi^{-}$ irreducible, invariant $G_2$-instantons exist only on the bundle $P_{-1}$. These bundles are topologically distinct, indeed using the formulae from corollary \ref{cor:charclass} we find that $w_2(E_{-4}) \equiv 0 \;(\bmod\; 2),\; p_1(E_{-4}) \equiv 2 \;(\bmod\; 7)$, while $w_2(E_{-1}) \equiv 1 \;(\bmod\; 2),\; p_1(E_{-1}) \equiv 1 \;(\bmod\; 7)$.
\end{example}

\begin{example}
($k=1, l=3$) On $X_{1,3}$ we find that $\varphi^{+}$ is determined by
$$A=2.81314, \ B=2.38489, \ C=1.76003, \ D=2.30416,$$
so that
$$\sigma_1(\varphi^{+})=-1304.73725, \ \sigma_2(\varphi^{+})= -794.17740, \ \sigma_3(\varphi^{+})= 286.31370.$$
On the other hand, $\varphi^{-}$ satisfies
$$A=1.70181, \ B=2.61482, \ C=2.73734, \ D=-1.76385,$$
so that
$$\sigma_1(\varphi^{-})=468.21163, \ \sigma_2(\varphi^{-})=-1124.80823, \ \sigma_3(\varphi^{-})=-1272.28946.$$
Hence, theorem \ref{thm:IrreducibleInstantons} implies that for $\varphi^{+}$ irreducible, invariant $G_2$-instantons exist only on the bundle $P_{-5},$ while for $\varphi^{-}$ irreducible, invariant $G_2$-instantons exist only on the bundle $P_{-2}$. Also in this case we can use the formulae from corollary \ref{cor:charclass} to find $w_2(E_{-5}) \equiv 1 \;(\bmod\; 2),\; p_1(E_{-5}) \equiv 12 \;(\bmod\; 13)$, and $w_2(E_{-2}) \equiv 0 \;(\bmod\; 2),\; p_1(E_{-2}) \equiv 4 \;(\bmod\; 13)$, so the bundles are topologically distinct.
\end{example}

\begin{example}($k=1, l=4$) On $X_{1,4}$ we find that $\varphi^{+}$ satisfies
$$A=2.80647, B=2.42496, C=1.74612, D=2.20834,$$
so that
$$\sigma_1(\varphi^{+})=-2113.76099, \sigma_2(\varphi^{+})=-1378.20704, \sigma_3(\varphi^{+})=526.44201.$$
On the other hand, $\varphi^{-}$ satisfies
$$A=1.01066,  B=2.42496, C=1.74612,  D=-1.79228,$$
so that
$$\sigma_1(\varphi^{-})=349.25330, \: \sigma_2(\varphi^{-})= -1593.71394, \: \sigma_3(\varphi^{-})= -823.16662.$$
Hence, theorem \ref{thm:IrreducibleInstantons} implies that for $\varphi^{+}$ irreducible, invariant $G_2$-instantons exist only on the bundle $P_{-6},$ while for $\varphi^{-}$ irreducible, invariant $G_2$-instantons exist only on the bundle $P_{-3}$. Using the formulae from corollary \ref{cor:charclass} we find $w_2(E_{-6}) \equiv 0 \;(\bmod\; 2),\; p_1(E_{-6}) \equiv 15 \;(\bmod\; 21)$, and $w_2(E_{-3}) \equiv 1 \;(\bmod\; 2),\; p_1(E_{-3}) \equiv 9 \;(\bmod\; 21)$, so the bundles are topologically distinct.
\end{example}

\begin{example}
($k=2, l=3$) On $X_{2,3}$ we find that $\varphi^{+}$ satisfies
$$A=2.82707, B=2.19724, C=1.84821, D=2.66829,$$
so that
$$\sigma_1(\varphi^{+})=-1857.93578, \sigma_2(\varphi^{+})=-753.70309, \sigma_3(\varphi^{+})=107.33579.$$
On the other hand, $\varphi^{-}$ satisfies
$$A=1.69781, B=2.65772, C=2.70655, D=-1.70795,$$
so that
$$\sigma_1(\varphi^{-})=705.20889, \: \sigma_2(\varphi^{-})= -1726.54024, \: \sigma_3(\varphi^{-})=-1812.54120.$$
Hence, theorem \ref{thm:IrreducibleInstantons} implies that for $\varphi^{+}$ irreducible, invariant $G_2$-instantons exist only on the bundle $P_{-7},$ while for $\varphi^{-}$ irreducible, invariant $G_2$-instantons exist only on the bundle $P_{-1}$. Using the formulae from corollary \ref{cor:charclass} we find $w_2(E_{-7}) \equiv 1 \;(\bmod\; 2),\; p_1(E_{-7}) \equiv 11 \;(\bmod\; 19)$, and $w_2(E_{-1}) \equiv 1 \;(\bmod\; 2),\; p_1(E_{-1}) \equiv 1 \;(\bmod\; 19)$, so the bundles are topologically distinct.
\end{example}

\begin{example}
$k=2, l=11.$

On $X_{2,11}$ we find that $\varphi^{+}$ satisfies
$$A=2.80000, B = 2.45576, C = 1.73649, D = 2.13220,$$
so that
$$\sigma_1(\varphi^{+})=-14809.57254, \sigma_2(\varphi^{+})=-10158.19056, \sigma_3(\varphi^{+})=4009.81206.$$
On the other hand, $\varphi^{-}$ satisfies
$$A = 1.70630, B = 2.58424, C = 2.75458, E = -1.82250,$$
so that
$$\sigma_1(\varphi^{-})=5116.36820, \: \sigma_2(\varphi^{-})= -12243.99444, \: \sigma_3(\varphi^{-})= -14559.71627.$$

Hence, theorem \ref{thm:IrreducibleInstantons} implies that for $\varphi^{+}$ irreducible, invariant $G_2$-instantons exist only on the bundle $P_{-15},$ while for $\varphi^{-}$ irreducible, invariant $G_2$-instantons exist only on the bundle $P_{-9}$.  Using the formulae from corollary \ref{cor:charclass} we find $w_2(E_{-15}) \equiv 1 \;(\bmod\; 2),\; p_1(E_{-15}) \equiv 78 \;(\bmod\; 147)$, and $w_2(E_{-9}) \equiv 1 \;(\bmod\; 2),\; p_1(E_{-9}) \equiv 81 \;(\bmod\; 147)$, so the bundles are topologically distinct.
\end{example}

\begin{remark}
We did not find any Aloff-Wallach space for which one of the strictly nearly parallel $G_2$-structures does not admit irreducible, invariant $G_2$-instantons with gauge group $SO(3)$.
\end{remark}

\subsection{Yang-Mills connections}\label{sss:AW_YM}

It is interesting to consider the question: what conditions on a $G_2$-structure ensure that a $G_2$-instanton is a Yang-Mills connection? Proposition \ref{prop:NearlyG2YM} says that this is the case for parallel and nearly parallel $G_2$-structures. In this section we shall characterise the homogeneous coclosed $G_2$-structures $\varphi \in \mathcal{C}$ for which an Abelian $G_2$-instanton is a critical point for the Yang-Mills energy.

\begin{proposition}\label{thm:AbG2instYM}
Equip $X_{k,l}$ with a $G_2$-structure \ref{eq:G2str} such that $\Delta \neq 0.$ Let $A^n$ be the unique $G_2$-instanton on the line bundle associated with $\lambda_n$. Then $A$ is a critical point for the Yang-Mills energy if and only if the $G_2$-structure satisfies
\begin{equation}\label{eq:ABG2YMcondition}
A^2B^2(A^2-B^2)l+A^2C^2(C^2-A^2)k+B^2C^2(B^2-C^2)m=0
\end{equation}
\end{proposition}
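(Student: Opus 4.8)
The plan is to exploit the fact that the $G_2$-instanton $A^n$ from Theorem~\ref{thm:AbelianInstantons} is, as noted in Remark~\ref{rem:Abelian_G2_Instantons_(k,l)}, pulled back from a connection on the flag manifold $\mathbb{F}_2 = SU(3)/T^2$, together with the $S^1$-invariant structure-splitting $\psi = -D\,\omega_4 \wedge \Omega_2 + \tfrac{1}{2}\omega^2$ used in the proof of Theorem~\ref{thm:AbelianInstantons}. For an Abelian connection the Yang-Mills equation is simply $d^* F_{A^n} = 0$, i.e.\ $d \ast F_{A^n} = 0$, which by the usual identity $\ast F = -F \wedge \varphi + (\text{correction involving } \pi_7)$ reduces, because $A^n$ is a $G_2$-instanton (so $\pi_7(F_{A^n})=0$ and $\ast F_{A^n} = -F_{A^n}\wedge \varphi$), to checking $d(F_{A^n} \wedge \varphi) = 0$. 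By the Bianchi identity $d F_{A^n} = 0$ this is $F_{A^n} \wedge d\varphi = 0$. So the whole problem becomes: compute $F_{A^n} \wedge d\varphi$ explicitly in the invariant coframe and read off when it vanishes.

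The concrete steps I would carry out, in order, are: (i) write $F^n = F^c_n + b\,d\omega_4$ with $F^c_n$ and $d\omega_4$ the explicit $2$-forms in terms of $\omega_{15},\omega_{26},\omega_{37}$ recorded in the proof of Theorem~\ref{thm:AbelianInstantons}, and substitute the instanton value $b = -\,n\Gamma/(6\sqrt{2}s\Delta)$; (ii) write $d\varphi$ from the explicit formula for $\sqrt{2}\,d\varphi$ given in Section~\ref{ss:AWgeometry}, which is a combination of $\omega_{4567}-\omega_{2345}+\omega_{1346}-\omega_{1247}$ and of $\omega_{2367}, \omega_{1357}, \omega_{1256}$ with coefficients built from $A,B,C,D,k,l,m,s$; (iii) form the wedge $F^n \wedge d\varphi$, a $7$-form, hence a multiple of $\omega_{1234567}$; the products $\omega_{15}\wedge\omega_{2367}$ etc.\ vanish (repeated indices), so only the $\omega_{15},\omega_{26},\omega_{37}$ parts of $F^n$ paired against the $\omega_{4567}-\omega_{2345}+\omega_{1346}-\omega_{1247}$ part of $d\varphi$ survive; (iv) collect the coefficient of $\omega_{1234567}$, simplify using $k+l+m=0$, and show it is proportional to the left-hand side of \eqref{eq:ABG2YMcondition} (up to a nonzero factor involving $n$, $\Delta$, $s$, $A^2B^2C^2$ and numerical constants), so that vanishing of $d^*F_{A^n}$ is equivalent to \eqref{eq:ABG2YMcondition} provided $\Delta\neq 0$ and, when $n\neq 0$, interpreting the case $n=0$ trivially (the trivial connection is always Yang-Mills, and one checks \eqref{eq:ABG2YMcondition} is consistent with that).

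The main obstacle is purely computational bookkeeping: assembling $F^n \wedge d\varphi$ correctly, keeping track of signs in the wedge products of the $\omega_i$, and carrying out the algebraic simplification using $m = -k-l$ so that the resulting symmetric expression collapses to exactly $A^2B^2(A^2-B^2)l + A^2C^2(C^2-A^2)k + B^2C^2(B^2-C^2)m$. A secondary subtlety is to confirm that only the $d^*F = 0$ piece matters — i.e.\ that for this invariant Abelian $G_2$-instanton the identity $\ast F_{A^n} = -F_{A^n}\wedge\varphi$ holds exactly (no $\pi_7$ term), which is immediate from $F_{A^n}\wedge\psi = 0$ and the $G_2$ representation theory recalled in Section~\ref{ss:Coclosed_G2_Strs}. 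Once the coefficient is extracted, the ``if and only if'' follows since $n\Delta \neq 0$ kills the prefactor (the $n=0$ case being the trivial flat connection, which is vacuously Yang-Mills and for which one notes \eqref{eq:ABG2YMcondition} may or may not hold but $A^0$ is the zero connection so the statement is about $n \neq 0$, where the prefactor is genuinely nonzero).
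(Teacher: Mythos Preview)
Your overall strategy is sound and genuinely different from the paper's: the paper simply writes the invariant Yang-Mills energy $E(b)$ as an explicit quadratic in $b$, differentiates, and substitutes the instanton value $b=-\tfrac{n\Gamma}{6\sqrt{2}s\Delta}$ to obtain \eqref{eq:ABG2YMcondition}. Your route---using that a $G_2$-instanton has $\ast F=-F\wedge\varphi$, so $d^*F=0$ becomes $F\wedge d\varphi=0$ via Bianchi---is more conceptual and exploits the instanton condition directly rather than computing the full energy. Both approaches yield a single scalar equation (the only invariant $1$-form being $\omega_4$), and yours has the advantage of making transparent \emph{why} the condition is independent of $D$: the $D(A^2+B^2+C^2)(\omega_{4567}-\ldots)$ block of $d\varphi$ drops out.

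However, your step (iii) contains two concrete bookkeeping errors that must be fixed. First, $F^n\wedge d\varphi$ is a $6$-form (a $2$-form wedged with a $4$-form), not a $7$-form; it is a multiple of $\omega_{123567}$, consistent with $d^*F^n$ being a multiple of $\omega_4$. Second, you have the surviving pairings exactly backwards: $\omega_{15}\wedge\omega_{2367}$ has no repeated index and \emph{does} survive, whereas $\omega_{15}\wedge(\omega_{4567}-\omega_{2345}+\omega_{1346}-\omega_{1247})$ vanishes termwise (each summand shares a $1$ or a $5$). The nonzero contributions are precisely
\[
\omega_{15}\wedge\omega_{2367},\qquad \omega_{26}\wedge\omega_{1357},\qquad \omega_{37}\wedge\omega_{1256},
\]
pairing the three coefficients of $F^n$ against the three ``$4ABCs-\ldots$'' coefficients of $d\varphi$. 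Once you assemble this correctly and substitute $b=-\tfrac{n\Gamma}{6\sqrt{2}s\Delta}$, the $4ABCs$ terms combine (using $k+l+m=0$) and the algebra collapses to a nonzero multiple of $n/\Delta$ times the left-hand side of \eqref{eq:ABG2YMcondition}, giving the ``if and only if'' for $n\neq 0$. Your handling of $n=0$ is fine.
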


\begin{proof}
From the proof of theorem \ref{thm:AbelianInstantons} we have
$$A^n= - \frac{n}{2} \left( \frac{1}{\sqrt{6} s} h + \frac{\Gamma}{3\sqrt{2} s\Delta}\omega_4 \right) \otimes T_1.$$

The Yang-Mills energy for an invariant Abelian connection
$A^n=\left( -\frac{n}{2 \sqrt{6} s} h + b\omega_4 \right) \otimes T_1.$
is
\begin{align*}
E(b)= & \frac{1}{144s^2} \Bigl( \frac{1}{A^4} \bigl( 6\sqrt{2}bms+n(k-l)\bigr)^2 + \frac{1}{B^4} \bigl( 6\sqrt{2}bms+n(l-m)\bigr)^2 \\
&\ + \frac{1}{C^4} \bigl( 6\sqrt{2}bms+n(m-k)\bigr)^2 \Bigr).
\end{align*}
Then, we require that at the $G_2$-instanton, i.e. $b=-\frac{n \Gamma}{6 \sqrt{2} s \Delta}$ be a critical point of $E(b)$, which immediately yields \ref{eq:ABG2YMcondition}.\\
For completeness we also remark that, in general, the critical points of $E$ have
$$b=\frac{n}{6\sqrt{2}s}\frac{A^4B^4l(k-m)+A^4C^4k(m-l)+B^4C^4m(l-k)}{A^4B^4l^2+A^4C^4k^2+B^4C^4m^2}.$$
\end{proof}

\begin{remark}
\begin{enumerate}
\item If $\Delta = 0$ then only one of the $G_2$-instantons in the 1-parameter family described in theorem \ref{thm:AbelianInstantons} is a critical point for the Yang-Mills energy.

\item For a given $X_{k,l}$ the condition \ref{eq:ABG2YMcondition} describes a hypersurface in the space of homogeneous coclosed $G_2$-structures, containing the nearly parallel $G_2$-structures.

\item One can carry out a similar analysis to determine conditions on the $G_2$-structure so that the irreducible $G_2$-instantons decribed in theorem \ref{thm:IrreducibleInstantons} are Yang-Mills. The space of such $G_2$-structures is cut out in $\mathcal{C}$ by two real algebraic equations.
\end{enumerate}
\end{remark}

\subsection{For a nearly parallel structure on $X_{1,-1}$}\label{sss:X1-1}

\subsubsection{$G_2$-instantons}\label{sss:X1-1_G2_Instantons}

We shall now see an example of a nearly parallel $G_2$-structure on a Aloff-Wallach space, namely $X_{1,-1}$, for which such instantons do exist. The precise statement we shall prove is

\begin{theorem}\label{thm:G2_Instantons_X_(1,-1)_Nearly_Parallel}
Let $\varphi$ be the nearly parallel $G_2$-structure on $X_{1,-1}$.
\begin{enumerate}
\item For each $n$, there is a unique, invariant, $G_2$-instanton on the line bundle $L_n= SU(3) \times_{ U(1)_{1,-1} , \rho_n} \mathbb{C}$.

\item Let $A$ be an irreducible and invariant $G_2$-instanton, with gauge group $SO(3)$ on $X_{1,-1}$. Then, $A$ lives on the bundle $P_{-1}$. Moreover, such instantons do exist.
\end{enumerate}
\end{theorem}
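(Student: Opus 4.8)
\textbf{Proof proposal for Theorem \ref{thm:G2_Instantons_X_(1,-1)_Nearly_Parallel}.}

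The plan is to specialize the general machinery of Section \ref{sec:AWG2Instantons} to the case $(k,l)=(1,-1)$, for which $m=-k-l=0$, and then feed in the specific nearly parallel $G_2$-structure. For item (1), I would invoke Corollary \ref{cor:G2InstLineBundle}: since $\varphi$ is strictly nearly parallel (by the discussion in Section \ref{ss:AWgeometry}, $X_{1,-1}$ is in the Weyl orbit of $X_{1,1}$, so one must be slightly careful about which of the nearly parallel structures is meant, but in any case $\varphi$ is nearly parallel with $\Ric>0$), every complex line bundle carries a unique $G_2$-instanton. Since $H^2(X_{1,-1},\mathbb Z)\cong\mathbb Z$ and the $L_n$ exhaust the homogeneous line bundles, and since Corollary \ref{cor:G2InstLineBundle} already provides uniqueness among \emph{all} connections (not just invariant ones), this is immediate; alternatively one can read it off directly from Theorem \ref{thm:AbelianInstantons}, checking that $\Delta\neq 0$ for this $\varphi$.

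For item (2), the key computation is Theorem \ref{thm:IrreducibleInstantons} with $(k,l)=(1,-1)$, $m=0$. The three relevant bundles are $P_{k-l}=P_{2}$, $P_{l-m}=P_{-1}$, and $P_{m-k}=P_{-1}$; note the last two coincide. So irreducible invariant $G_2$-instantons can only live on $P_2$ or $P_{-1}$, governed by the signs of $\sigma_1,\sigma_2,\sigma_3$ respectively. The first step is therefore to determine the data $(A,B,C,D)$ of the nearly parallel $\varphi$ on $X_{1,-1}$ by solving system \eqref{eq:nearlyparallelG2eqns} with $s=\sqrt{(1+1+0)/6}=1/\sqrt3$, and then to evaluate $\Gamma$, $\Delta$ and
$$\sigma_1=3\Big(\tfrac m2-s\tfrac{AD}{BC}\Big)\Delta+\tfrac{k-l}{2}\Gamma,\quad \sigma_2=3\Big(\tfrac k2-s\tfrac{BD}{AC}\Big)\Delta+\tfrac{l-m}{2}\Gamma,\quad \sigma_3=3\Big(\tfrac l2-s\tfrac{CD}{AB}\Big)\Delta+\tfrac{m-k}{2}\Gamma.$$
With $m=0$ and using that $\varphi$ is nearly parallel (so $d\varphi=\lambda\psi$), I expect $\sigma_1$ to come out $\le 0$, ruling out $P_2$, while $\sigma_2$ (equivalently $\sigma_3$, since $l-m$ and $m-k$ are $-1$ and $\Gamma$ enters with the same coefficient) comes out strictly positive, forcing the instanton onto $P_{-1}$ and simultaneously establishing existence via the explicit formula for $a_2^2$ (or $a_3^2$) and $b$ in Theorem \ref{thm:IrreducibleInstantons}. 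One should also note, as in Case 4 of the proof of Theorem \ref{thm:IrreducibleInstantons}, that $n=l-m=m-k=-1$ is exactly the degenerate situation there, so the angle parameter $\beta$ can be gauged away and the analysis genuinely reduces to the $a_2$ or $a_3$ case.

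The main obstacle is purely computational: solving \eqref{eq:nearlyparallelG2eqns} for $X_{1,-1}$ in closed form is messy (as the authors note elsewhere, the symbolic solutions are unwieldy), so in practice I would either carry the nearly parallel relation $t^2+1=2\lambda t$, $12t=\lambda$ type normalization through symbolically, or simply exhibit a decimal approximation of $(A,B,C,D)$ and check the signs of $\sigma_1,\sigma_2,\sigma_3$ numerically, exactly as is done in the examples of Section \ref{ss:distinguish}. The conceptual content—that the topology of the bundle carrying the instanton is forced, and that existence is equivalent to a sign condition—is entirely supplied by Theorems \ref{thm:AbelianInstantons} and \ref{thm:IrreducibleInstantons}; what remains is to verify that for this particular $\varphi$ the relevant $\sigma$ has the right sign. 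The identification of $P_{-1}$ as the unique possibility then also uses that $P_2$ and $P_{-1}$ are topologically distinct (Corollary \ref{cor:charclass} with $k=1,l=-1,m=0$ gives $w_2(E_2)\equiv 0$, $w_2(E_{-1})\equiv 1\pmod 2$), so there is no ambiguity in the conclusion.
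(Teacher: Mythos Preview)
Your overall strategy---reduce to Theorems \ref{thm:AbelianInstantons} and \ref{thm:IrreducibleInstantons} and check the signs of the $\sigma_i$---is exactly the paper's approach, and your shortcut for item (1) via Corollary \ref{cor:G2InstLineBundle} is in fact cleaner than the paper's direct computation of $a_4$. Two corrections are needed, however.

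First, a minor slip: $X_{1,-1}$ is \emph{not} in the Weyl orbit of $X_{1,1}$. With $(k,l,m)=(1,-1,0)$ the Weyl group permutes these three integers, and none of the permutations gives $(1,1,-2)$. This is harmless for your argument (the structure is still strictly nearly parallel, since $k\ne l$, $k\ne 2l$, $l\ne -2k$), but the parenthetical should be deleted.

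Second, and this is a genuine gap: your claim that $\sigma_2$ and $\sigma_3$ are equivalent is wrong. You are right that with $(k,l,m)=(1,-1,0)$ the $\Gamma$-coefficients agree (both $l-m$ and $m-k$ equal $-1$), but the $\Delta$-coefficients do not: one involves $\tfrac{k}{2}-s\tfrac{BD}{AC}=\tfrac12-s\tfrac{BD}{AC}$, the other $\tfrac{l}{2}-s\tfrac{CD}{AB}=-\tfrac12-s\tfrac{CD}{AB}$. In fact the paper computes, for the specific nearly parallel values
\[
A=-4\sqrt{\tfrac{2}{5}},\quad B=\tfrac{4}{15}\sqrt{75+15\sqrt5},\quad C=-\tfrac{4}{15}\sqrt{75-15\sqrt5},\quad D=-\tfrac{16}{45}\sqrt{30},
\]
that $\sigma_1<0$, $\sigma_2<0$, but $\sigma_3>0$. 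So the instanton on $P_{-1}$ comes from the $a_3$-branch (with $a_2=0$), not interchangeably from either branch. Since $n=l-m=m-k=-1$ is precisely Case 4 of Theorem \ref{thm:IrreducibleInstantons}, the conclusion you want---existence on $P_{-1}$---still follows because \emph{at least one} of $\sigma_2,\sigma_3$ is positive; but you must compute both separately rather than asserting they coincide. The paper carries this out in closed form (the answers involve $\sqrt5$), so a symbolic rather than numerical verification is available.
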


The rest of this section is dedicated to proving this result. First we must obtain the strictly nearly parallel $G_2$-structure on $X_{1,-1}$. This is of the form \ref{eq:G2str}, with
$$A= -4 \sqrt{\frac{2}{5}}, \ B=\frac{4}{15} \sqrt{75+15 \sqrt{5}}, \ C=-\frac{4}{15} \sqrt{75-15 \sqrt{5}}, \ D= - \frac{16}{45} \sqrt{30},$$
as a straightforward computation shows. We shall now compute $G_2$-instantons for this structure. Starting with Abelian ones on the bundles $L_n = SU(3) \times_{\lambda_n} \mathbb{C}$. The invariant connections are of the form $\frac{n}{2}h + a_4 \omega_4$ and the $G_2$-instanton equation $(\frac{n}{2} dh + a_4 d \omega_4 ) \wedge \psi=0$ gives
$$\left( - \frac{256 n}{45} + \frac{512 a_4}{75} \sqrt{30}   \right) \omega_{1234567}=0.$$
Hence, we must have $a_4 = n \frac{\sqrt{30}}{36}$ and the resulting $G_2$-instanton has curvature
$$F=\frac{n}{2} \omega_{15} - \frac{n}{4} (\omega_{26} + \omega_{37}) + \frac{n \sqrt{5}}{12}  ( \omega_{37} - \omega_{26} ).$$
We turn now to non-Abelian $G_2$-instantons, namely those with gauge group $SO(3)$ that we constructed before. We start by considering the case $n=k-l=2$, i.e. instantons on the bundle on $P_2 = SU(3) \times_{\lambda_2} SO(3)$. Inserting the $A,B,C,D$, associated with the nearly parallel $G_2$-structure, into our general formula one can check that the quantity inside the square root is negative and so there are no invariant, irreducible, $G_2$-instantons on $P_{2}$. In fact, to be a little more explicit we shall explain all the steps underlying that computation in this case. First, the more general invariant connection on $P_{2}$ has $a_2=a_3=0$ and so is of the form $A= (\frac{1}{\sqrt{2}} h + a_4 \omega_4 ) \otimes T_1 + a_1 (\omega_1 \otimes T_2 + \omega_5 \otimes T_3)$. We compute its curvature $F_A$ as before and equate $F_A \wedge \psi=0$, which yields the following equations
\begin{eqnarray}
1-\frac{3}{5} \sqrt{30} a_4-4a_1^2 & = & 0 \\
a_1 (2 \sqrt{2} - \sqrt{15}a_4) & = & 0.
\end{eqnarray}
From the second of this we see that either $a_1=0,$ in which case the connection is reducible, or $a_4= 2 \sqrt{ \frac{ 2}{15}}$. Inserting this into the first equation we then have to solve $-7/5-4a_1^2=0$, which has no real solutions. Alternatively we could have just evaluated $\sigma_1=-14336/225$, which being negative shows that there no irreducible instantons on $P_{2}$.\\
We analyze now the case when $n=l-m$ or $n=m-k$ as in both these cases we have $n=-1$. In this case an invariant connection must have $a_1=0$, while $a_2$ and $a_3$ can be nonzero. However, as we have seen in our analysis of the general case, the $G_2$-instanton equations imply that at least one of these vanish. In fact, after inserting the values of $A,B,C,D$ into the formulae of theorem \ref{thm:IrreducibleInstantons}, we can check that $\sigma_2 <0$ and $\sigma_3>0$. Hence, there are irreducible $G_2$-instantons and any such has $a_2=0$ and
$$a_3 = \pm \frac{\sqrt{15}}{60} \sqrt{5-\sqrt{5}} \sqrt{13 \sqrt{5}-25} , \ \ a_4 =- \frac{\sqrt{6}}{36} \frac{45-7 \sqrt{5}}{5 + \sqrt{5}}.$$
The quantities appearing inside the square root are positive and so these solutions do correspond to genuine $G_2$-instantons for the nearly parallel $G_2$-structure on $X_{1,-1}$. For completeness we write the curvature of such an instanton in the usual way with
\begin{eqnarray}\nonumber
F_1 & = & -\frac{1}{2} \  \omega_{15} - \frac{1}{3 (5 + \sqrt{5})} \left( (15- \sqrt{5}) \omega_{26} + (20 - 9 \sqrt{5}) \omega_{37} \right) \\ \nonumber
F_2 & = & \mp \frac{\sqrt{2}}{72} \frac{\sqrt{5 - \sqrt{5}} \sqrt{13 \sqrt{5} - 25}}{5 + \sqrt{5}} \left( 3 \sqrt{3} (1+ \sqrt{5}) (\omega_{12}-\omega_{56} ) + 16 \omega_{47} \right) \\ \nonumber
F_3 & = & \pm \frac{\sqrt{2}}{72} \frac{\sqrt{5 - \sqrt{5}} \sqrt{13 \sqrt{5} - 25}}{5 + \sqrt{5}} \left( 3 \sqrt{3} (1+ \sqrt{5}) (\omega_{16}-\omega_{25} ) - 16 \omega_{34} \right).
\end{eqnarray}

\subsubsection{Yang-Mills unstable $G_2$-instantons}\label{sss:YMX1-1}

Let $A$ be a $G_2$-instanton for a be a nearly parallel $G_2$-structure $\varphi$, i.e. such that $d \varphi = \lambda \psi$. We have seen, in proposition \ref{prop:NearlyG2YM}, that such $G_2$-instantons are actually Yang-Mills connections. Moreover, equation \ref{eq:action} and the discussion below it show that in the torsion free case a $G_2$-instanton minimizes the Yang-Mills energy, and so is Yang-Mills stable. That need not be the case for strictly nearly parallel $G_2$-structures as we now show with a counterexample on the nearly parallel $X_{1,-1}$.

\begin{proposition}\label{prop:UnstableX1-1}
The irreducible $G_2$-instantons constructed in the second item of theorem \ref{thm:G2_Instantons_X_(1,-1)_Nearly_Parallel}, over the nearly parallel $X_{1,-1}$, are unstable as Yang-Mills connections.
\end{proposition}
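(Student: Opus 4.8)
The plan is to show that the Hessian of the Yang--Mills energy $E$ at $A$ is indefinite by exhibiting a single variation along which the second variation is negative. First I would recall, from the remark following Proposition~\ref{prop:NearlyG2YM}, that for $a\in\Omega^1(X_{1,-1},\mathfrak g_P)$ one has
\begin{equation*}
\delta^2 E_A(a)=\int_{X_{1,-1}}\bigl(\vert d_A a\vert^2-\langle[a\wedge a],F_A\rangle\bigr)\,\dvol_{g_\varphi}=\langle H_A a,a\rangle_{L^2},\qquad H_A a=d_A^* d_A a-\ast[a\wedge\ast F_A].
\end{equation*}
Since $A$ is Yang--Mills by Proposition~\ref{prop:NearlyG2YM}, the standard fact that $H_A$ annihilates $\im(d_A)$ at a Yang--Mills connection gives $\delta^2 E_A(a)=\delta^2 E_A(a^\perp)$, where $a^\perp$ is the Coulomb part of $a$. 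Hence it suffices to produce \emph{any} smooth $a$ with $\delta^2 E_A(a)<0$, and $A$ will be unstable.

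Next I would reduce to a finite-dimensional computation by taking $a$ tangent to the space of $SU(3)$-invariant connections on $P_{-1}$. For $(k,l)=(1,-1)$ we have $m=0$ and $-1=l-m=m-k$, which is Case~4 of the case splitting in section~\ref{sss:Case_Splitting_kl}; using the invariant gauge freedom to set $a_2=0$ and $\beta=0$ (legitimate because $\sigma_2(\varphi)<0$ forces $a_2=0$ at the instanton, exactly as in the proof of Theorem~\ref{thm:IrreducibleInstantons}), the irreducible $G_2$-instanton $A$ of Theorem~\ref{thm:G2_Instantons_X_(1,-1)_Nearly_Parallel}(2) sits inside the two-parameter family
\begin{equation*}
A_{a_3,a_4}=\Bigl(-\tfrac{l}{2}\tfrac{h}{\sqrt 6\,s}+a_4\,\omega_4\Bigr)\otimes T_1+a_3\bigl(\omega_3\otimes T_2+\omega_7\otimes T_3\bigr),\qquad l=-1,
\end{equation*}
at the parameter values $(a_3^\ast,a_4^\ast)$ recorded there. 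I would then compute $F_{A_{a_3,a_4}}=F_{A_c}+d_{A_c}(A_{a_3,a_4}-A_c)+\tfrac12[A_{a_3,a_4}-A_c,\,A_{a_3,a_4}-A_c]$ from the Maurer--Cartan equations, and the (constant) pointwise norm $\vert F_{A_{a_3,a_4}}\vert^2_{g_\varphi}$ using the explicit constants $A,B,C,D$ of the nearly parallel structure on $X_{1,-1}$, obtaining the invariant Yang--Mills energy $E(a_3,a_4)=\tfrac12\,\vol(X_{1,-1})\,\vert F_{A_{a_3,a_4}}\vert^2$ as an explicit polynomial (quartic in $a_3$, quadratic in $a_4$, with $a_3a_4$-cross terms).

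Because $A$ is Yang--Mills it is a critical point of $E(a_3,a_4)$, so its $2\times2$ Hessian at $(a_3^\ast,a_4^\ast)$ equals $\delta^2 E_A$ restricted to $\operatorname{span}\{\omega_4\otimes T_1,\ \omega_3\otimes T_2+\omega_7\otimes T_3\}$. The last step is to evaluate this Hessian and check that its determinant is negative; note that on the critical set $\partial_{a_3}^2 E>0$ automatically (a quartic in $a_3$ with positive leading coefficient), so the negative direction is a genuine mixture $a=f\,\omega_4\otimes T_1+g(\omega_3\otimes T_2+\omega_7\otimes T_3)$ of the two parameters, and $(a_3^\ast,a_4^\ast)$ is a saddle of $E$ --- this is the content of Figure~\ref{fig:YMfunctional}. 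By the first paragraph, $\delta^2 E_A(a)<0$ for this $a$ (equivalently for its Coulomb part) proves that $A$ is an unstable Yang--Mills connection. I expect the main obstacle to be purely computational: producing the explicit $E(a_3,a_4)$ and carrying out the arithmetic at the instanton values, which involve the nested radicals $\sqrt{5\pm\sqrt 5}$ and $\sqrt{13\sqrt 5-25}$; there is no conceptual difficulty. Alternatively one could diagonalise $H_A$ directly on the finite-dimensional space of invariant $\mathfrak g_P$-valued $1$-forms, but the energy-function route is cleaner and matches the contour plot.
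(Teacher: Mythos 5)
Your proposal follows essentially the same route as the paper: the paper restricts the Yang--Mills energy to the invariant connections on $P_{-1}$ with $a_2=0$, writes the resulting $E(a,b)$ explicitly, checks that the two irreducible $G_2$-instantons are critical points, and computes $\det(\mathrm{Hess}(E))<0$ and $\tr(\mathrm{Hess}(E))>0$, so they are index-one saddles and hence unstable. The only loose aside is your claim that $\partial_{a_3}^2E>0$ holds \emph{automatically} because $E$ is quartic in $a_3$ with positive leading coefficient --- that inference is not valid in general (a quartic can have a critical point with negative second derivative), but it is not needed, since instability follows from the negative determinant alone.
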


\begin{proof}
In order to demonstrate instability, it will be sufficient to consider the Yang-Mills energy only for invariant connections with $a_2=0$. We will denote $a_3$ simply by $a$. The Yang-Mills energy for the connection
$$A^{-1}=\left( \frac{h}{2\sqrt{2}} + b\omega_4 \right) \otimes T_1 + a \left(\omega_3 \otimes T_2 + \omega_7 \otimes T_3 \right)$$
on $P_{-1}$ is
\begin{align*}
E(a,b)&\ =\frac{25}{4096}+\frac{50625}{4096}\frac{(2\sqrt{6}b+1)^2}{(75+15\sqrt{5})^2}+\frac{50625}{4096}\frac{(2\sqrt{6}b+8a^2-1)^2}{(75-15\sqrt{5})^2} \\ &\ +\frac{1125}{256}\frac{a^2}{75+15\sqrt{5}}+\frac{91125}{147456}\frac{a^2(4\sqrt{3}b+3\sqrt{2})^2}{75-15\sqrt{5}}.
\end{align*}

A routine calculation shows that, as expected from Proposition \ref{prop:NearlyG2YM}, the $G_2$-instantons 
$$a=\pm\frac{1}{6}\sqrt{12\sqrt{5}-21}, b=-\frac{1}{36}\frac{\sqrt{6}(202\sqrt{5}-345)}{14\sqrt{5}-5}$$
are critical points for this energy. For both of these $G_2$-instantons the determinant and trace of the Hessian of $E(a,b)$ are
\begin{align*}
& \det(\text{Hess}(E))=\frac{1265625}{81920000}\frac{250875-126967\sqrt{5}}{4580-1364\sqrt{5}} < 0 \\
&\ \tr(\text{Hess}(E))=\frac{1874}{512000}\frac{54305\sqrt{5}-28931}{402-56\sqrt{5}} > 0,
\end{align*}
and so they are critical points of index one, hence unstable as Yang-Mills connections.
\end{proof}
\begin{figure}[h]
\centering
\includegraphics[scale=0.4]{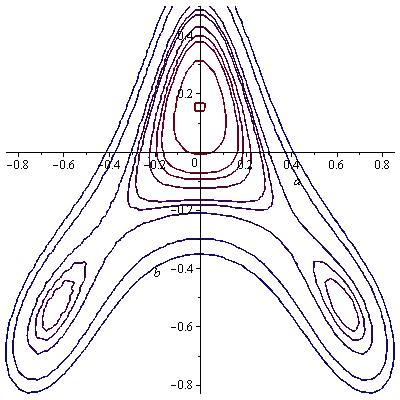}
\caption{\label{fig:YMfunctional}  Level sets of the invariant Yang-Mills functional with $a_2=0$. One can see three local minimuns, the global minimum is on top and is a reducible $G_2$-instanton. There are also two saddles which lie on straight lines from the reducible $G_2$-instanton to the other local minimuns. Those saddle points correspond to the irreducible $G_2$-instantons.}
\end{figure}
\begin{remark}
\begin{enumerate}
\item It is not difficult to check that the reducible $G_2$-instanton is the global minimum for the Yang-Mills energy among all invariant connections on the bundle $P_{-1}$ (i.e. even when $a_2 \neq 0.$)
\item When restricting to the $a_2=0$ case there are three local minimums of the Yang-Mills energy: the reducible $G_2$-instanton, and a pair of Yang-Mills connections that are not $G_2$-instantons, see figure \ref{fig:YMfunctional}. The two irreducible $G_2$-instantons are the two saddles in that figure.
\end{enumerate}
\end{remark}

\section{Gauge theory on $X_{1,1}$}\label{sec:X11}

In this section we study $G_2$-instantons on $X_{1,1}$, with respect to the $G_2$-structures \ref{eq:G2str}. This case was excluded from the previous section, since here the existence result for invariant connections, lemma \ref{lem:ConnectionForms_Case_(1,1)}, requires a separate analysis. We further remark that in the case $(k,l)=(1,1)$ the form \ref{eq:G2str} for the $G_2$-structure does not yield the most general homogeneous coclosed $G_2$-structure. We start by proving theorems \ref{thm:abelianinstantonsX11} and \ref{thm:irreducibleinstantonsX11}, which are the analogues of theorems \ref{thm:AbelianInstantons} and \ref{thm:IrreducibleInstantons}, classifying Abelian and non-Abelian $G_2$-instantons on $X_{1,1}$. Then, in theorem \ref{thm:G2_Instanton_Obstructed_(1,1)} we prove that the same phenomenon as in theorem \ref{thm:G2_Instanton_Obstructed} occurs in the case of $X_{1,1}$. Namely, we prove that on $X_{1,1}$ there are irreducible invariant $G_2$-instantons, with gauge group $SO(3)$, that as the $G_2$-structure varies merge into the same reducible and obstructed one.\\
Then, in section \ref{sss:X11} we specialize to a certain subfamily of $G_2$-structures in $\mathcal{C}$ and write down the explicit formulas for the $G_2$-instantons in this subfamily. The main results here are theorems \ref{thm:X11} and \ref{thm:X11Irreducible}. In particular, this last one proves that there are two bundles (one of which is the trivial one) carrying irreducible $G_2$-instantons, with gauge group $SO(3)$, for a continuous family of $G_2$-structures. Also, we prove in theorem \ref{thm:Limit} that as the fibres of a projection $\pi: X_{1,1} \rightarrow \overline{\mathbb{CP}}^2$ collapse, the irreducible $G_2$-instantons in the trivial bundle converge to the pullback of a connection from $\overline{\mathbb{CP}}^2$. We also show this cannot be true for the $G_2$-instantons in the other bundle. Finally, in corollary \ref{cor:NoInstantons} we prove that: while there are no invariant irreducible $G_2$-instantons with gauge group $SO(3)$ for the tri-Sasakian structure on $X_{1,1}$, these do exist for the strictly nearly parallel one.

\subsection{Abelian case}\label{ss:Abelian_X_(1,1)}

The following theorem is the analog of theorem \ref{thm:AbelianInstantons}, classifying invariant $G_2$-instantons on $X_{1,1}$ with gauge group $U(1)$. Note that for $(k,l)=(1,1),$
$$\Gamma=3A^2(C^2-B^2), \: \: \Delta=A^2B^2+A^2C^2-2B^2C^2.$$

\begin{theorem}\label{thm:abelianinstantonsX11}
Equip $X_{1,1}$ with the $G_2$-structure \ref{eq:G2str}. Let $A^n$ be an invariant $G_2$-instanton on the line bundle $Q_n$ over $X_{1,1}$. Then,
\begin{enumerate}
\item if $AD+BC \neq 0$, either
\begin{enumerate}
\item $\Delta \neq 0$, in which case $A^n$ is the unique $G_2$-instanton on $Q_n$. Its connection $1$-form is
$$A^n= - \frac{n}{2} \left( \frac{1}{\sqrt{6}} h + \frac{\Gamma}{3\sqrt{2}\Delta}\omega_4 \right).$$ 

\item $\Delta=0$, but $\Gamma \neq 0$ in which case $n=0$ and so $A$ lives in the trivial homogenous bundle (i.e. that associated with $\lambda_0$), and $A^n$ is simply one of the $1$-forms $b \omega_4$, for some $b \in \mathbb{R}$.

\item $\Delta=0$ and $\Gamma=0$, in which case there is a real $1$-parameter family of such instantons.
\end{enumerate}

\item while if $AD+BC=0$, then either
\begin{enumerate}
\item $\Delta \neq 0$, in which case there is a real $2$-parameter family of such $G_2$-instantons on $Q_n$, and $A^n$ is given by
$$A^n= - \frac{n}{2} \left( \frac{1}{\sqrt{6}} h + \frac{\Gamma}{3\sqrt{2}\Delta}\omega_4 + a_1 \omega_1 + a_5 \omega_5 \right),$$
for some $a_1, a_5 \in \mathbb{R}.$

\item $\Delta=0$, but $\Gamma \neq 0$ in which case $n=0$ and so $A$ lives in the trivial homogenous bundle (i.e. that associated with $\lambda_0$), and $A$ is simply one of the $1$-forms $b \omega_4+a_1 \omega_1 + a_5\omega_5$, for some $a_1, a_5, b \in \mathbb{R}$.

\item $\Delta=0$ and $\Gamma=0$, in which case there is a real $3$-parameter family of such instantons.
\end{enumerate}
\end{enumerate}
\end{theorem}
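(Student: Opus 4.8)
The plan is to rerun, for $(k,l)=(1,1)$, exactly the computation behind the proof of Theorem~\ref{thm:AbelianInstantons}, keeping track of the two extra connection parameters that appear only when $k=l$. First I would pin down the space of invariant connections. By Wang's theorem~\cite{Wang1958} an invariant connection on the line bundle $Q_n$ over $X_{1,1}$ corresponds to a $U(1)_{1,1}$-equivariant map $\mathfrak{m}\to\mathfrak{u}(1)\cong\mathbb{R}$ into the trivial representation, hence to a linear functional supported on the trivial summands of $\mathfrak{m}$. As recorded in Lemma~\ref{lem:ConnectionForms_Case_(1,1)} and subsection~\ref{sss:Case_Splitting_11}, for $(k,l)=(1,1)$ one has $\mathfrak{m}=\langle X_1\rangle\oplus\langle X_5\rangle\oplus\langle X_4\rangle\oplus\langle X_2,X_6\rangle_{3}\oplus\langle X_3,X_7\rangle_{-3}$, so the trivial part is $\langle X_1,X_4,X_5\rangle$ and every invariant connection on $Q_n$ has the form $A^n=-\tfrac{n}{2\sqrt6}h+b\,\omega_4+a_1\,\omega_1+a_5\,\omega_5$ with $a_1,a_5,b\in\mathbb{R}$. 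The new feature relative to Theorem~\ref{thm:AbelianInstantons} is exactly the presence of $a_1,a_5$, reflecting that the root space $V_1=\langle X_1,X_5\rangle$ (of weight $k-l$) splits into two trivial $U(1)_{1,1}$-modules precisely when $k=l$.

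Next, since the structure group is abelian the curvature is just $F^n=dA^n=-\tfrac{n}{2\sqrt6}\,dh+b\,d\omega_4+a_1\,d\omega_1+a_5\,d\omega_5$, and I would compute the exterior derivatives from the Maurer--Cartan equations in the basis of $\mathfrak{su}(3)$ fixed in subsection~\ref{ss:AWgeometry}. One finds $dh,d\omega_4\in\langle\omega_{15},\omega_{26},\omega_{37}\rangle$ (as in the general case), while $d\omega_1\in\langle\omega_{45},\omega_{23},\omega_{67}\rangle$ and $d\omega_5\in\langle\omega_{14},\omega_{27},\omega_{36}\rangle$. Wedging the $4$-form $\psi$ associated with the $G_2$-structure \ref{eq:G2str} with these three subspaces produces, respectively, multiples of the three \emph{linearly independent} $6$-forms $\omega_{123567}$, $\omega_{234567}$, $\omega_{123467}$ in $\Lambda^6\cong\Lambda^1$. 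Hence $F^n\wedge\psi=0$ is equivalent to three decoupled scalar equations,
$$6\sqrt2\,\Delta\,b+n\,\Gamma=0,\qquad a_1\,(AD+BC)=0,\qquad a_5\,(AD+BC)=0,$$
with $\Gamma=3A^2(C^2-B^2)$, $\Delta=A^2B^2+A^2C^2-2B^2C^2$. The first is precisely the equation obtained in the proof of Theorem~\ref{thm:AbelianInstantons}, and fixes $b=-n\Gamma/(6\sqrt2\,\Delta)$ when $\Delta\neq0$.

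The case analysis is then routine. If $AD+BC\neq0$ then $a_1=a_5=0$, so $A^n$ is reducible and we are exactly in the situation of Theorem~\ref{thm:AbelianInstantons}: if $\Delta\neq0$ there is a unique admissible $b$, hence a unique instanton; if $\Delta=0$ the first equation reads $n\Gamma=0$, which forces $n=0$ with $b$ arbitrary when $\Gamma\neq0$, and leaves $b$ free on every $Q_n$ when $\Gamma=0$ — this gives parts (1a), (1b), (1c). If instead $AD+BC=0$, the second and third equations hold identically, so $a_1$ and $a_5$ are unconstrained and each of the three sub-cases acquires two further free parameters, yielding the $2$-, $2$- and $3$-parameter families of parts (2a), (2b), (2c).

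The only substantial computation, and the step I expect to be most error-prone, is the explicit evaluation of $d\omega_1$, $d\omega_5$ and of $d\omega_1\wedge\psi$, $d\omega_5\wedge\psi$: one must verify that these equal nonzero multiples of $(AD+BC)\,\omega_{234567}$ and $(AD+BC)\,\omega_{123467}$ — that is, that the $a_1$- and $a_5$-equations carry the \emph{same} linear factor $AD+BC$, which is exactly what makes both directions open up simultaneously rather than separately. This amounts to bookkeeping the structure constants of $\mathfrak{su}(3)$ in the given basis (the brackets $[X_4,X_5],[X_2,X_3],[X_6,X_7]$ land in $\langle X_1\rangle$ and $[X_1,X_4],[X_2,X_7],[X_3,X_6]$ land in $\langle X_5\rangle$) and tracking signs carefully, together with a quick check that the three $6$-forms above are genuinely independent so that $b$, $a_1$, $a_5$ do not couple.
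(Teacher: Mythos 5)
Your proposal follows essentially the same route as the paper: the same invariant connection ansatz $A^n=-\tfrac{n}{2\sqrt6}h+b\,\omega_4+a_1\omega_1+a_5\omega_5$ from Lemma~\ref{lem:ConnectionForms_Case_(1,1)}, the same computation of $F^n\wedge\psi$ producing the decoupled equations $6\sqrt2\,\Delta b+n\Gamma=0$ and $a_1(AD+BC)=a_5(AD+BC)=0$ (the paper records exactly $\sqrt2\,BC(AD+BC)(a_1\omega_{234567}+a_5\omega_{123467})+(\tfrac{1}{\sqrt2}\Delta b+\tfrac{1}{12}n\Gamma)\omega_{123567}$), and the same case split on $AD+BC$, $\Delta$, $\Gamma$. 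The only slip is your count ``$2$-, $2$- and $3$-parameter'' for parts (2a)--(2c): by your own remark that each sub-case gains two extra free parameters, case (2b) is a $3$-parameter family ($b,a_1,a_5$ free, with $n=0$ forced), exactly as the theorem states.
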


\begin{proof}
Any Abelian $G_2$-instanton can be interpreted as a reducible $SO(3)$ instanton. Hence, we can use the formulae \ref{sss:Case_Splitting_11} for the connection form
$$A^n= -\frac{n}{2}\frac{h}{\sqrt{6}}+b\omega_4+a_1\omega_1+a_5\omega_5 .$$

For this connection the $6$-form $F^n \wedge \psi$ becomes
\begin{align*}
& \sqrt{2}BC(AD+BC) \left( a_1\omega_{234567} + a_5 \omega_{123467} \right) \\
&\ + \left(\frac{1}{\sqrt{2}} \Delta a_4 + \frac{1}{12}n\Gamma \right) \omega_{123567}.
\end{align*}
Equating this to zero the result follows from splitting into the various possible cases and simple algebraic manipulations.
\end{proof}

\begin{remark}
\begin{itemize}
\item The condition that $\Delta=0=\Gamma$ and $AD+BC=0$ can occur. Take for example a $G_2$-structure with $A=B=C$ and $D=-A$. In this case there is a $3$-parameter family of invariant $G_2$-instantons on any complex line bundle over $X_{1,1}$.

\item These existence of this real $3$-parameter family for these $G_2$-structures can be understood in light of proposition \ref{prop:3_Parameter_Family}.
\end{itemize}
\end{remark}

\subsection{Non-Abelian case}\label{ss:Non_Abelian_X_(1,1)}

Next we have the analog of theorem \ref{thm:IrreducibleInstantons}, classifying invariant, irreducible $G_2$-instantons over $X_{1,1}$ with gauge group $SU(2)$.

\begin{theorem}\label{thm:irreducibleinstantonsX11}
Equip $X_{1,1}$ with the $G_2$-structure \ref{eq:G2str}. Then, invariant, irreducible $G_2$-instantons exist on the bundle $P_{\lambda_n}$ if and only if
\begin{enumerate}
\item $n=0$ and $-\Delta \left( 1 + \frac{AD}{BC} \right) > 0$, in which case the $G_2$-instanton has connection 1-form
$$A^0=a_4 \omega_4 \otimes T_1 + a_1 \left( \omega_1 \otimes T_2 +  \omega_5 \otimes T_3 \right),$$
where the $a_i$ satisfy
\begin{align*}
&a_1^2= \frac{-\Delta}{4B^2C^2}\left(  1 + \frac{AD}{BC}  \right) \\
&\ a_4= - \frac{1}{\sqrt{2}} \left(1 + \frac{AD}{BC} \right).
\end{align*}

\item $n=3$ and $\sigma_2(\varphi)= 3 \left( \frac{1}{2} - \frac{BD}{AC} \right) \Delta + \frac{3}{2}\Gamma > 0$, in which case $a_1=a_5=0,$ and
\begin{align*}
& a_2^2 = \frac{1}{12A^2C^2} \left( 3 \left( \frac{1}{2} - \frac{BD}{AC} \right) \Delta + \frac{3}{2}\Gamma \right) \\
&\ b=\frac{1}{2}\left( \frac{1}{2} - \frac{BD}{AC} \right).
\end{align*}

\item $n=-3$ and $\sigma_3(\varphi)=  3 \left( \frac{1}{2} - \frac{CD}{AB} \right) \Delta - \frac{3}{2} \Gamma > 0$, in which case $a_1=a_5=0,$ and
\begin{align*}
& a_3^2 = \frac{1}{12A^2B^2} \left(  3 \left( \frac{1}{2} - \frac{CD}{AB} \right) \Delta - \frac{3}{2} \Gamma \right) \\
&\ b=\frac{1}{2}\left( \frac{1}{2} - \frac{CD}{AB} \right).
\end{align*}
\end{enumerate}
\end{theorem}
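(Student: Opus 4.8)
The plan is to imitate the proof of Theorem~\ref{thm:IrreducibleInstantons}, replacing the classification of invariant connections used there by the one valid on $X_{1,1}$, i.e.\ Lemma~\ref{lem:ConnectionForms_Case_(1,1)} together with the explicit normal forms of \S\ref{sss:Case_Splitting_11}. For $(k,l)=(1,1)$ one has $m=-2$ and $s=\sqrt{(1+1+4)/6}=1$, which makes all the formulae shorter. Since the space of invariant connections on $P_{\lambda_n}$ depends on $n$ only through whether $n\in\{0,3,-3\}$, the argument splits into the corresponding four cases. In Case~$0$ ($n\neq 0,\pm 3$) the normal form is $A^n=\bigl(-\tfrac n2\tfrac h{\sqrt 6}+b\omega_4+a_1\omega_1+a_5\omega_5\bigr)\otimes T_1$, which takes values in $\langle T_1\rangle$ and is therefore reducible; such connections are already accounted for by Theorem~\ref{thm:abelianinstantonsX11} and contribute no irreducible instantons. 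So the entire content lies in the cases $n=3$, $n=-3$ and $n=0$.

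In each of these cases I would first compute the curvature from $F^n=F^n_c+d_{A^n_c}(A^n-A^n_c)+\tfrac12[A^n-A^n_c,A^n-A^n_c]$ and the Maurer--Cartan equations of $SU(3)$, and then expand $F^n\wedge\psi$ using the explicit $4$-form $\psi$ of \eqref{eq:G2str}. For $n=3$ this is essentially the specialization of Case~$2$ of the proof of Theorem~\ref{thm:IrreducibleInstantons} to $k=l=1$, $m=-2$, $s=1$, with the extra feature that the connection now also carries $a_1\omega_1\otimes T_1$ and $a_5\omega_5\otimes T_1$ terms. Setting $F^3\wedge\psi=0$ produces a short system: one quadratic relation between $b$ and $a_2^2$, one equation of the form $a_2\cdot(\text{linear in }b)=0$, and further equations whose only effect is to force $a_1=a_5=0$ as soon as $a_2\neq 0$. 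Thus either $a_2=0$, giving the reducible instanton of Theorem~\ref{thm:abelianinstantonsX11}, or $a_2\neq 0$, in which case the linear factor pins $b$ down to the value in the statement and the quadratic then forces $a_2^2=\tfrac1{12A^2C^2}\bigl(3(\tfrac12-\tfrac{BD}{AC})\Delta+\tfrac32\Gamma\bigr)$; a genuine irreducible solution exists exactly when this is positive, i.e.\ when $\sigma_2(\varphi)>0$. The case $n=-3$ is the same computation with $\Gamma$ and the relevant $D$-term replaced appropriately, yielding the condition $\sigma_3(\varphi)>0$.

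The case $n=0$ is the genuinely new ingredient and I expect it to be the main obstacle, because there the invariant-connection normal form of \S\ref{sss:Case_Splitting_11} is $A^0=\omega_1\otimes c_1+\omega_4\otimes c_4+\omega_5\otimes c_5$ with $c_1,c_4,c_5\in\mathfrak{so}(3)$ \emph{arbitrary} (the weight-$3$ subspaces $\langle X_2,X_6\rangle$, $\langle X_3,X_7\rangle$ cannot map to the trivial $\mathfrak{so}(3)$-summands, so $a_2=a_3=0$), and the canonical connection is trivial, so $F^0=dA^0+\tfrac12[A^0,A^0]$. Setting $F^0\wedge\psi=0$ should impose a rigid set of linear and quadratic constraints on $c_1,c_4,c_5$; viewing them as vectors in $\mathbb R^3$, these should force $c_1\perp c_4$, $c_5\perp c_4$, $c_1\perp c_5$ and $|c_1|=|c_5|$. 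The residual invariant gauge group for $n=0$ consists of the constant gauge transformations, acting by simultaneous $SO(3)$-conjugation of $(c_1,c_4,c_5)$, so one may rotate any irreducible solution to $c_4=a_4T_1$, $c_1=a_1T_2$, $c_5=a_1T_3$, irreducibility being exactly the condition $a_1\neq 0$. The two surviving scalar equations then read $a_4=-\tfrac1{\sqrt 2}\bigl(1+\tfrac{AD}{BC}\bigr)$ and $a_1^2=\tfrac{-\Delta}{4B^2C^2}\bigl(1+\tfrac{AD}{BC}\bigr)$, so an irreducible instanton exists iff $-\Delta\bigl(1+\tfrac{AD}{BC}\bigr)>0$. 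The delicate point is precisely this normalization step: one must verify that the instanton equation rigidly constrains the three $\mathfrak{so}(3)$-valued parameters — in particular that the two ``off-diagonal'' coefficients are forced equal and that $\{c_1,c_5,c_4\}$ is forced to be an orthogonal triple — so that, modulo the constant gauge group, every irreducible solution has the stated form. (Note that $P_{\lambda_0}$ genuinely enters here, since $0=k-l$ when $k=l$, unlike in Theorem~\ref{thm:IrreducibleInstantons}.) Once this is in place, the remaining verifications are the same elementary algebra as in the $(k,l)\neq(1,1)$ case.
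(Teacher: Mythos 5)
Your proposal follows the paper's own proof essentially verbatim: split on $n$, dismiss $n\neq 0,\pm 3$ as forcing reducibility, treat $n=\pm 3$ by specializing the computation of theorem \ref{thm:IrreducibleInstantons} (with the extra $a_1,a_5$ terms killed when the off-diagonal coefficient is nonzero), and for $n=0$ take $A^0=\omega_1\otimes c_1+\omega_4\otimes c_4+\omega_5\otimes c_5$ with arbitrary $c_i\in\mathfrak{so}(3)$ and normalize by constant gauge transformations. The rigidity you flag as the delicate point is exactly what the paper's computation supplies: $F^0\wedge\psi=0$ reduces to $BC[c_1,c_4]=\sqrt{2}(AD+BC)\,c_5$, $BC[c_4,c_5]=\sqrt{2}(AD+BC)\,c_1$, $\sqrt{2}B^2C^2[c_1,c_5]=\Delta\, c_4$, which in the case $[c_1,c_5]\neq 0$ force the mutually orthogonal configuration you predicted (gauged to $c_1=r_1T_2$, $c_4=r_4T_1$, $c_5=r_5T_3$ with $r_5=\pm r_1$, the sign being pure gauge), while $[c_1,c_5]=0$ leads back to the reducible Abelian solutions of theorem \ref{thm:abelianinstantonsX11}.
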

\begin{proof}
We follow the same strategy as in the proof of theorem \ref{thm:IrreducibleInstantons}, splitting into the cases described above. \\

\textbf{Case 0:} $n \neq 0, 3, -3$. Here $A^n$ is always reducible so there cannot be an invariant, irreducible instanton. We note that the reducible $G_2$-instantons arising from this case are exactly those appearing in theorem \ref{thm:abelianinstantonsX11}. \\

\textbf{Cases 2 and 3:} $n=3, -3$. These cases can be handled in the same way as the second and third items in theorem \ref{thm:IrreducibleInstantons}, so we omit the details. \\

\textbf{Case 1:} $n=0$. Here any invariant connection is simply a left, and $\Ad(U(1)_{1,1})$ invariant 1-form, with values in $\mathfrak{so}(3)$. We write it as
$$A^0 =   \omega_1 \otimes c_1 + \omega_4 \otimes c_4 + \omega_5 \otimes c_5,$$
where $c_1, c_4, c_5 \in \mathfrak{so}(3)$. We compute the curvature of this connection using the formula $F^0=dA^0+\frac{1}{2}[A^0 \wedge A^0]$. This gives
\begin{align*}
F^0&=d\omega_1 \otimes c_1 + d\omega_4 \otimes c_4 + d\omega_5 \otimes c_5 \\
&\ +\omega_{14} \otimes [c_1,c_4] + \omega_{15} \otimes [c_1,c_5] + \omega_{45} \otimes [c_4,c_5].
\end{align*}
The equation $F^0 \wedge \psi=0$, after a small amount of simplification, yields
\begin{align*}
&  BC[c_1,c_4]=\sqrt{2} \left( AD+BC \right) c_5 \\
&\ BC[c_4,c_5]=\sqrt{2} \left( AD+BC \right) c_1 \\
&\sqrt{2}\ B^2C^2[c_1,c_5]= \Delta c_4.
\end{align*}

Bracketing the third equation with $c_4$ gives us $ \left[ \left[ c_1,c_5 \right], c_4 \right] =0$. We first assume that $[c_1,c_5]\neq 0$, which by the third equation implies $c_4 \neq 0$. This being the case, we may change gauge to require that
$$c_1 = r_1 T _2, c_4=r_4 T_1, c_5=r_5 T_3$$ for some nonzero real constants $r_1, r_4, r_5.$ With this choice, the system becomes
\begin{align*}
&  -2BCr_1r_4=\sqrt{2} \left( AD+BC \right) r_5 \\
&\ -2BCr_4r_5=\sqrt{2} \left( AD+BC \right) r_1 \\
&\ 2\sqrt{2}B^2C^2r_1r_5= \Delta r_4.
\end{align*}
since we have assumed that the $r_i$ are nonzero, we must have $\Delta \neq 0$ and $AD+BC \neq 0$. The solutions to these equations are readily found to be
\begin{align*}
&r_1^2= \frac{-\Delta}{4B^2C^2}\left(  1 + \frac{AD}{BC}  \right) \\
&\ r_5 = \pm r_1 \\
&\ r_4= \mp \frac{1}{\sqrt{2}} \left(1 + \frac{AD}{BC} \right),
\end{align*}
This seems to yield four solutions, provided $$-\frac{\Delta}{2\sqrt{2}} \left( 1 + \frac{AD}{BC} \right) > 0,$$
however the solutions differing only by the $\pm$ sign are gauge equivalent: we can change gauge to send $T_1$ to $-T_1,$ and $T_3$ to $-T_3.$ At this point we set $a_1=r_1$ and $a_4=r_4$ yielding the result in the statement.

If $[c_1,c_5]=0$ then we may by change of gauge fix $c_1=\lambda_1 T_1, c_5=\lambda_5 T_1$ for some (possibly zero) constants $\lambda_1, \lambda_5.$ Then, considering the first equation $BC[c_1,c_4]=\sqrt{2} \left( AD+BC \right) c_5,$ we must have $[c_1,c_4]=0.$ Therefore the connection is reducible, and the solutions will correspond to Abelian $G_2$-instantons already described in theorem \ref{thm:AbelianInstantons}.
\end{proof}

With exactly the same method as in theorem \ref{thm:G2_Instanton_Obstructed_(1,1)} we can prove that when the $G_2$-instantons merge they become reducible and obstructed.

\begin{theorem}\label{thm:G2_Instanton_Obstructed_(1,1)}
Let $\lbrace \varphi(s) \rbrace_{s \in \mathbb{R}}$ be a continuous family of $G_2$-structures as in \ref{eq:G2str} such that $\sigma_1(\varphi(s)) >0$, for $s<0$ and $\sigma_1(\varphi(s))<0$, for $s>0$. Then as $s \nearrow 0$ the two irreducible $G_2$-instantons on $P_{\lambda_0}$ from theorem \ref{thm:IrreducibleInstantons} merge and become the same reducible and obstructed $G_2$-instanton when they disappear for $s\leq 0$.
\end{theorem}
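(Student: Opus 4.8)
The plan is to run the $X_{1,1}$, $n=0$ analogue of the proof of Theorem \ref{thm:G2_Instanton_Obstructed}, using the invariant connections of Lemma \ref{lem:ConnectionForms_Case_(1,1)} in place of those of Lemma \ref{lem:ConnectionForms}. Writing $\sigma_1(\varphi)=-\Delta\bigl(1+\tfrac{AD}{BC}\bigr)$ as in the first item of Theorem \ref{thm:irreducibleinstantonsX11}, the explicit formulas there give, along a family with $\sigma_1(\varphi(s))\searrow 0$, that $a_1^2=\tfrac{\sigma_1}{4B^2C^2}\to 0$ while $a_4=-\tfrac{1}{\sqrt 2}\bigl(1+\tfrac{AD}{BC}\bigr)$; hence the two irreducible $G_2$-instantons $a_1=\pm\tfrac{1}{2BC}\sqrt{\sigma_1}$ both converge to the reducible Abelian $G_2$-instanton $A_0=a_4\,\omega_4\otimes T_1$, one of those classified by Theorem \ref{thm:abelianinstantonsX11}. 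So the only thing left to prove is that $A_0$ is obstructed exactly when $\sigma_1=0$, which is the case in the limit.

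For that, I would use that, since $\varphi$ is coclosed, the deformation operator $L(\,\cdot\,)=\ast\bigl(d_{A_0}\,\cdot\,\wedge\psi\bigr)$ on $\Omega^1(X_{1,1},\mathfrak g_{P_{\lambda_0}})$ is self-adjoint, so its cokernel is isomorphic to $\ker L$, and by $SU(3)$-equivariance it suffices to exhibit a nonzero \emph{invariant} element of $\ker L$. By Wang's theorem and Schur's lemma, exactly as in Lemma \ref{lem:ConnectionForms_Case_(1,1)} but now with $\mathfrak{so}(3)$ carrying the trivial $U(1)_{1,1}$-action (since $n=0$), an invariant $\mathfrak g_{P_{\lambda_0}}$-valued $1$-form is a combination of $\omega_1,\omega_5,\omega_4$ tensored with arbitrary elements of $\mathfrak{so}(3)$, with no weight-$\pm 3$ contribution. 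Because $F_{A_0}$ takes values in $\langle T_1\rangle$ the bundle reduces, $\mathfrak g_{P_{\lambda_0}}\cong\underline{\R}\oplus\mathcal L$, and the linearized equation $d_{A_0}a\wedge\psi=0$ splits accordingly: the $\langle T_1\rangle$-part recovers the Abelian deformation theory already under control from Theorem \ref{thm:abelianinstantonsX11}, while the new (irreducible) branch directions live in the $\langle T_2,T_3\rangle$-part, spanned by $x\,\omega_4\otimes T_1$ and $y\,(\omega_1\otimes T_2+\omega_5\otimes T_3)$ modulo $\mathrm{im}(d_{A_0})$. Computing $d_{A_0}a$ from the Maurer--Cartan equations (with $s=1$ on $X_{1,1}$) and imposing $d_{A_0}a\wedge\psi=0$ should reduce, just as in Theorem \ref{thm:G2_Instanton_Obstructed}, to a homogeneous $2\times2$ linear system in $(x,y)$.

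The last step is to evaluate the determinant of that system at $A_0$, i.e. after substituting $a_1=0$ and $a_4=-\tfrac{1}{\sqrt 2}\bigl(1+\tfrac{AD}{BC}\bigr)$: as in Theorem \ref{thm:G2_Instanton_Obstructed}, where one finds $\det=8BC\,\sigma_1/3$, the $a_1^2$-term should drop out and one is left with a nonzero constant multiple of $\Delta\bigl(1+\tfrac{AD}{BC}\bigr)=-\sigma_1$. Thus $\ker L\neq 0$ — that is, $A_0$ is obstructed — exactly when $\sigma_1=0$, which is what happens in the limit; and no irreducible branch survives once $\sigma_1\le 0$, since $a_1^2=\sigma_1/(4B^2C^2)$ then has no nonzero real solution. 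The hard part will be purely the bookkeeping: correctly enumerating the invariant $\mathfrak g_{P_{\lambda_0}}$-valued $1$-forms at $n=0$ (there are more of them than in the $(k,l)\ne(1,1)$ case because $\mathfrak{so}(3)$ is untwisted), checking that the $\langle T_1\rangle$- and $\langle T_2,T_3\rangle$-blocks genuinely decouple so that the obstruction is captured by a single $2\times2$ determinant, and running the $X_{1,1}$-specialized curvature and covariant-derivative computation with the structure constants recorded in Section \ref{ss:AWgeometry}. Once this is set up it is the direct $X_{1,1}$-analogue of what was already done for Theorem \ref{thm:G2_Instanton_Obstructed}.
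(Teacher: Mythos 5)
Your proposal follows essentially the same route as the paper, whose proof of this statement is literally to run the method of Theorem \ref{thm:G2_Instanton_Obstructed} with the $X_{1,1}$, $n=0$ data: your identification of the limiting reducible connection and of the relevant $2\times 2$ invariant block $(x\,\omega_4\otimes T_1,\; y(\omega_1\otimes T_2+\omega_5\otimes T_3))$ is the right one. One step is misstated, although the conclusion survives: if you substitute $a_1=0$ \emph{and} $a_4=-\tfrac{1}{\sqrt2}\bigl(1+\tfrac{AD}{BC}\bigr)$ simultaneously, the second diagonal entry $\bigl(\sqrt2\bigl(2\tfrac{AD}{BC}+2\bigr)+4a_4\bigr)BC$ vanishes identically, so the determinant is identically zero for \emph{all} $(A,B,C,D)$ --- not a nonzero multiple of $-\sigma_1$ --- because that pair of values is an instanton only when $\sigma_1=0$. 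To recover the ``exactly when $\sigma_1=0$'' characterization you must evaluate either along the irreducible branch, substituting $a_1^2=\sigma_1/(4B^2C^2)$ together with the special $a_4$ as in the proof of Theorem \ref{thm:G2_Instanton_Obstructed} (which gives $\det=8BC\,\sigma_1$ in your normalization of $\sigma_1$), or at the actual Abelian instanton of Theorem \ref{thm:abelianinstantonsX11} on $P_{\lambda_0}$, which has $a_1=0$ and $b=0$, giving $\det=4BC\bigl(1+\tfrac{AD}{BC}\bigr)\Delta=-4BC\,\sigma_1$. Either way the block degenerates precisely at $\sigma_1=0$, with kernel spanned by $\omega_1\otimes T_2+\omega_5\otimes T_3$; note also that this element sits in the $\langle T_2,T_3\rangle$-block while $\omega_4\otimes T_1$ sits in the $\langle T_1\rangle$-block (they couple only when $a_1\neq0$), and it is not one of the gauge directions $\omega_4\otimes T_2,\ \omega_4\otimes T_3$ generated by invariant sections at the reducible connection, so it does represent a genuine obstruction, as in the paper's argument.
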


\begin{remark}
A similar statement holds for the $G_2$-instantons on $P_{\lambda_{\pm}}$ with $\sigma_1$ replaced by $\sigma_2$ and $\sigma_3$ respectively.
\end{remark}

\subsection{An example of merging $G_2$-instantons on $X_{1,1}$}\label{sss:X11}

We may think of $\pi_1 :X_{1,1} \rightarrow \overline{\mathbb{CP}}^2$ as in remark \ref{rem:Weyl}, i.e. as an $SO(3)$-bundle over $\overline{\mathbb{CP}}^2$, which is a quaternion-K\"ahler $4$-manifold (anti-self-dual, Einstein) with positive Einstein constant. The discussion before proposition \ref{prop:G2InstFromQK}, in section \ref{sec:NearlyParallel}, shows that $X_{1,1}$ carries two nearly parallel $G_2$-structures. One inducing a tri-Saskian metric and other inducing a strictly nearly parallel one. This last one will be contained in the family of $G_2$-structures we consider in this section. Proposition \ref{prop:G2InstFromQK} gives some examples of $G_2$-instantons on $X_{1,1}$ by pulling back anti-self-dual connections on $\overline{\mathbb{CP}}^2$. In fact, on any line bundle over $\overline{\mathbb{CP}}^2$ there is one such connection that is $SU(3)$-invariant, namely the canonical connection $-\frac{n}{2}\frac{1}{\sqrt{6}} h$ on the degree $n$-bundle. In what follows we shall confirm this fact and we will also obtain other examples of $G_2$-instantons that are not pulled back from $\overline{\mathbb{CP}}^2$.\\
In this subsection we will consider the $G_2$-structures in the family \ref{eq:G2str} that satisfy $C=B$ and $D=A$. This is, up to scaling, the $1$-parameter family in the hypothesis of proposition \ref{prop:G2InstFromQK} with $t$ proportional to $A/B$. For completeness we note that the $G_2$-structure in equation \ref{eq:G2str} gives
\begin{eqnarray}\nonumber
\psi & = & B^4 \left( \omega_{2367} - \frac{A^2}{B^2} \left( \omega_{51} \wedge \Omega_1 + \omega_{45} \wedge \Omega_2 - \omega_{14} \wedge \Omega_3 \right)   \right),
\end{eqnarray} 
where $\Omega_1 = \omega_{26}+\omega_{37}$, $\Omega_2=\omega_{23}+\omega_{76}$ and $\Omega_3=\omega_{27}+\omega_{63}$ form an orthonormal basis for the pullback of the space of self-dual $2$-forms on $\overline{\mathbb{CP}}^2$.
One can then chack that this family contains one of the homogeneous nearly parallel $G_2$-structure on $X_{1,1}$. In fact, one can check that $A = 2 \sqrt{2} / \lambda$ and $B=2/\lambda$ satisfy $d \varphi= \lambda \psi$.\\
For the structures we are considering,
\begin{align*}
&AD+BC=A^2 + B^2 \neq 0, \\
&\ \Delta = 2 B^2 (A^2-B^2) , \\
&\ \Gamma =0,
\end{align*}
and thus theorem \ref{thm:abelianinstantonsX11} tells us that for $\Delta \neq 0$, i.e. $A^2 \neq B^2$, there is a unique $G_2$-instanton on $Q_n$, this has $b=0$ and so is precisely the canonical invariant connection $-\frac{n}{2 \sqrt{6}} h$. Its curvature is
$$-\frac{n}{2\sqrt{6}} dh = -\frac{n}{4} (\omega_{26} - \omega_{37}),$$
and as remarked before, is actually the pullback from $\overline{\mathbb{CP}^2}$ of self dual 2-form. On the other hand, the first bullet in the same theorem shows that when $A^2 = B^2$ there is a one parameter family of $G_2$-instantons, namely any of the connections $-\frac{n}{2 \sqrt{6}} h + b \omega_4$, for $b \in \mathbb{R}$. We state these conclusions as
%

\begin{theorem}\label{thm:X11}
Let $A, B \in \mathbb{R}^+$ and equip $X_{1,1}= SU(3)/U(1)_{1,1}$ with the $G_2$-structure
$$\varphi_{A,B}=  A^3 \omega_{145} + AB^2(\omega_{123}-\omega_{167}+\omega_{257}-\omega_{356} - \omega_{426} - \omega_{437}).$$
If $L$ is a complex line bundle over $X_{1,1}$ with $c_1(L)=n \in \mathbb{Z} \cong H^2(X_{1,1}, \mathbb{Z})$, then:
\begin{itemize}
\item If $A^2 \neq B^2$ the canonical connection $-\frac{n}{2 \sqrt{6}} h$ is the unique invariant $G_2$-instanton on $L$.
\item If $A^2=B^2$, then the connections $-\frac{n}{2 \sqrt{6}} h + b \omega_4$ are $G_2$-instantons for any $b \in \mathbb{R}$, these being the unique invariant $G_2$-instantons on $L$.
\end{itemize} 
\end{theorem}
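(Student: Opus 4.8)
The plan is to derive this as a direct corollary of Theorem \ref{thm:abelianinstantonsX11} once we specialize the parameters. First I would compute the relevant quantities $\Gamma$ and $\Delta$ for the subfamily $C=B$, $D=A$. From the formulas $\Gamma = 3A^2(C^2-B^2)$ and $\Delta = A^2B^2 + A^2C^2 - 2B^2C^2$ recorded just before Theorem \ref{thm:abelianinstantonsX11}, substituting $C=B$ gives $\Gamma = 0$ and $\Delta = 2A^2B^2 - 2B^4 = 2B^2(A^2-B^2)$. I would also note that $AD+BC = A^2+B^2 > 0$ since $A,B \in \R^+$, so we are always in case (1) of Theorem \ref{thm:abelianinstantonsX11}. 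This immediately splits the analysis into $\Delta \neq 0$ (i.e.\ $A^2 \neq B^2$) and $\Delta = 0$ (i.e.\ $A^2 = B^2$).

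Next, in the case $A^2 \neq B^2$, we are in case (1)(a): there is a unique invariant $G_2$-instanton on $Q_n$, with connection $1$-form $A^n = -\frac{n}{2}\big(\frac{1}{\sqrt 6}h + \frac{\Gamma}{3\sqrt 2 \Delta}\omega_4\big)$. Since $\Gamma = 0$, this reduces to $A^n = -\frac{n}{2\sqrt 6}h$, the canonical connection. In the case $A^2 = B^2$ we have $\Delta = 0$ but we must check whether $\Gamma = 0$ as well; since $\Gamma = 0$ identically on this subfamily, we are in case (1)(c), which gives a real $1$-parameter family of invariant $G_2$-instantons, and by the explicit description these are exactly the connections $-\frac{n}{2\sqrt 6}h + b\omega_4$ for $b \in \R$. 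That completes both bullet points. For the statement's rewriting of $\varphi_{A,B}$, I would simply plug $C=B$, $D=A$ into \eqref{eq:G2str}, using $\omega_{145} = -\omega_4 \wedge \omega_{15}$ and collecting terms, and verify the displayed form of $\psi$ by applying the Hodge star of $g_{\varphi_{A,B}}$ (this is a bookkeeping check rather than a real computation).

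The only genuinely substantive point is that the $G_2$-structures $\varphi_{A,B}$ really do all lie in $\mathcal{C}$ and that Theorem \ref{thm:abelianinstantonsX11} applies verbatim — but this is immediate because $\mathcal{C}$ for $(k,l)=(1,1)$ is by definition (Definition \ref{def:C}, extended via the second item of the remark after Lemma \ref{lem:Space_Of_G2_Structures}) the set of $G_2$-structures of the form \eqref{eq:G2str}, and $C=B$, $D=A$ is a legitimate choice of the nonzero parameters $(A,B,C,D)$. So there is no real obstacle; the proof is essentially ``specialize and read off.'' The main thing to be careful about is sign and normalization conventions when rewriting $\varphi$ and $\psi$, and making sure the claim ``these being the unique invariant $G_2$-instantons'' in the $A^2=B^2$ bullet is justified — this is exactly the content of case (1)(c) of Theorem \ref{thm:abelianinstantonsX11}, which asserts the $1$-parameter family exhausts all invariant $G_2$-instantons on $L$, so nothing further is needed.

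\begin{proof}
Since $A, B \in \R^+$, the tuple $(A,B,C,D) = (A,B,B,A)$ is an admissible choice of nonzero parameters, so $\varphi_{A,B}$ is of the form \eqref{eq:G2str} and hence lies in $\mathcal{C}$; Theorem \ref{thm:abelianinstantonsX11} applies. For this subfamily we compute, using the formulas recorded before Theorem \ref{thm:abelianinstantonsX11},
\[
\Gamma = 3A^2(C^2 - B^2) = 0, \qquad \Delta = A^2B^2 + A^2C^2 - 2B^2C^2 = 2B^2(A^2 - B^2),
\]
and $AD + BC = A^2 + B^2 > 0$. Thus we are always in case (1) of Theorem \ref{thm:abelianinstantonsX11}.

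If $A^2 \neq B^2$ then $\Delta \neq 0$, so we are in case (1)(a): there is a unique invariant $G_2$-instanton on $L = Q_n$, with connection $1$-form
\[
A^n = -\frac{n}{2}\left( \frac{1}{\sqrt 6} h + \frac{\Gamma}{3\sqrt 2\, \Delta}\omega_4 \right) = -\frac{n}{2\sqrt 6} h,
\]
using $\Gamma = 0$. This is the canonical connection.

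If $A^2 = B^2$ then $\Delta = 0$ and $\Gamma = 0$, so we are in case (1)(c): there is a real $1$-parameter family of invariant $G_2$-instantons on $L$, and these are precisely the connections $-\frac{n}{2\sqrt 6} h + b\,\omega_4$ for $b \in \R$, with no others.

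Finally, substituting $C = B$ and $D = A$ into \eqref{eq:G2str} and using $\omega_{145} = -\,\omega_4 \wedge \omega_{15}$ gives
\[
\varphi_{A,B} = A^3 \omega_{145} + AB^2\left( \omega_{123} - \omega_{167} + \omega_{257} - \omega_{356} - \omega_{426} - \omega_{437} \right),
\]
as claimed, and the corresponding $4$-form $\psi = \ast_{g_{\varphi_{A,B}}}\varphi_{A,B}$ is obtained by the same specialization of the formula for $\psi$ in Section \ref{ss:AWgeometry}. This completes the proof.
\end{proof}
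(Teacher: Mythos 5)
Your proposal is correct and is essentially identical to the paper's own argument: the paper also specializes to $C=B$, $D=A$, records $AD+BC=A^2+B^2\neq 0$, $\Gamma=0$, $\Delta=2B^2(A^2-B^2)$, and reads off both bullets from Theorem \ref{thm:abelianinstantonsX11} (cases (1)(a) and (1)(c)). Your computations, including the sign check $\omega_{145}=-\omega_4\wedge\omega_{15}$, are accurate, so nothing is missing.
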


\begin{remark}\label{rem:X11}
\begin{enumerate}
\item The canonical connection $-\frac{n}{2\sqrt{6}}h$ is the pullback of an anti-self-dual connection on $\overline{\mathbb{CP}^2}$. Therefore, the fact that it is a $G_2$-instanton with respect to $\varphi_{A,B}$ also follows from proposition \ref{prop:G2InstFromQK}. Its uniqueness for the nearly parallel structure is also a consequence of corollary \ref{cor:G2InstLineBundle}, however uniqueness amongst invariant ones for other structures in the family $\lbrace \varphi_{A,B} \rbrace_{A \neq B}$ is not.

\item The Abelian instantons constructed for $A=B$ show that the uniqueness part of corollary \ref{cor:G2InstLineBundle} does not extend from nearly parallel to general coclosed $G_2$-structures. In fact, not even the rigidity stated in corollary \ref{cor:Rigid} holds.
\end{enumerate}
\end{remark}

We turn now to invariant, irreducible, non-Abelian $G_2$-instantons. We start with case $n=k-l=0$. Theorem \ref{thm:irreducibleinstantonsX11} tells us that $G_2$-instantons on $P_0$ exist if and only if
$$-2 B^2 (A^2-B^2) \left( 1 + \frac{A^2}{B^2} \right) > 0,$$
or in other words if and only if $B^2>A^2$. In this case we have 
$$A^0=a_4 \omega_4 \otimes T_1 + a_1 \left( \omega_1 \otimes T_2 + \omega_5 \otimes T_3 \right),$$ where the $a_i$ must satisfy
\begin{align*}
& a_1^2= \frac{B^4-A^4}{2B^4}  \\
&\ a_4= - \frac{A^2+B^2}{\sqrt{2}B^2}.
\end{align*}
The curvature  of these connections is 
\begin{equation}\nonumber
F=F_1 \otimes T_1 + F_2 \otimes T_2 + F_3 \otimes T_3,
\end{equation}
with
\begin{eqnarray}\label{eq:CurvatureX_11.1.1}
F_1 & = & - \left( \frac{A^2}{B^2} + 1 \right) \left( \frac{A^2}{B^2} \omega_{15} - \frac{1}{2} (\omega_{26} - \omega_{73}) \right) \\ \label{eq:CurvatureX_11.1.2}
F_2 & = & \pm \sqrt{1- \frac{A^4}{B^4}} \left( \frac{A^2}{B^2}  \omega_{45} - \frac{1}{2} (\omega_{23} - \omega_{67} ) \right) \\ \label{eq:CurvatureX_11.1.3}
F_3 & = & \pm \sqrt{1- \frac{A^4}{B^4}} \left( \frac{A^2}{B^2}  \omega_{14} - \frac{1}{2} ( \omega_{36} - \omega_{27}) \right). 
\end{eqnarray}
The other cases when there exist nontrivial invariant connections are when $n= \pm 3$. Notice that $P_3$ and $P_{-3}$ are interchanged by the automorphism of $SU(3)$ given by $g \mapsto g^{-1}$. This automorphism preserves $U(1)_{1,1}$ and so descends to a diffeomorphism of $X_{1,1}=X_{-1,-1}$. We shall therefore consider only the case $n=3$ where $a_1=a_3=0$. Also in this case, our work above gives that there are irreducible, invariant $G_2$-instantons on $P_3$ (resp. $P_{-3}$) if and only if 
$$\sigma_2 =\sigma_3 = 3B^2(B^2-A^2) \geq 0,$$
i.e. $B^2 > A^2$. In that case we have
$$a_2  = \pm \frac{1}{2} \sqrt{-1+ \frac{B^2}{A^2}} , \ a_4 = - \frac{1}{2 \sqrt{2}},$$
and their curvature is such that
\begin{eqnarray}\label{eq:CurvatureX_11.2.1}
F_1 & = &  -\frac{1}{2} \omega_{15} - \left(1-\frac{B^2}{2A^2} \right) \omega_{26} + \omega_{37}  \\ \label{eq:CurvatureX_11.2.2}
F_2 & = & \pm \frac{1}{ \sqrt{2}} \sqrt{-1+ \frac{B^2}{A^2}} \left(  \omega_{46} + \frac{1}{2} \left(  \omega_{13} - \omega_{57} \right) \right) \\ \label{eq:CurvatureX_11.2.3}
F_3 & = & \mp \frac{1}{ \sqrt{2}} \sqrt{-1 + \frac{B^2}{A^2}} \left( - \omega_{24} + \frac{1}{2} \left(  \omega_{17} - \omega_{35} \right) \right).  
\end{eqnarray}
As before these are clearly irreducible and not pulled back from $\overline{\mathbb{CP}^2}$ via $\pi$. We have thus proved

\begin{theorem}\label{thm:X11Irreducible}
For $A, B \in \mathbb{R}^+$ let $\varphi_{A,B}$ be the $G_2$-structure on $X_{1,1}= SU(3)/U(1)_{1,1}$ from theorem \ref{thm:X11}. Let $\nabla_A$ be an $SU(3)$-invariant, irreducible $G_2$-instanton for $\varphi_{A,B}$, with gauge group $SO(3)$. Then, either
\begin{enumerate}
\item $\nabla_A$ lives on $P_{0}$, the trivial $SO(3)$-bundle over $X_{1,1}$, in which case the following holds:
\begin{itemize}
\item If $A < B$, then $\nabla_A$ is one of two $G_2$-instanton on $P_{0}$, having curvature as in equations \ref{eq:CurvatureX_11.1.1}--\ref{eq:CurvatureX_11.1.3}.
\item If $A \geq B$, there is no invariant, irreducible $G_2$-instanton on $P_{0}$.
\end{itemize}
\item $\nabla_A$ lives on one of the bundles $P_{3}$ or $P_{-3}$, in which case the following holds:
\begin{itemize}
\item If $A < B$, then $\nabla_A$ is one of two invariant, irreducible $G_2$-instantons on $P_{\pm 3}$. In the case $\nabla_A$ lives on $P_{3}$, its curvature is as in equations \ref{eq:CurvatureX_11.2.1}--\ref{eq:CurvatureX_11.2.3}.
\item If $A \geq B$, there is no invariant, irreducible $G_2$-instanton on either $P_{\pm 3}$.
\end{itemize}
\end{enumerate}
\end{theorem}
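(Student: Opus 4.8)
The statement is a direct corollary of Theorem~\ref{thm:irreducibleinstantonsX11} once one substitutes the specific $G_2$-structures $\varphi_{A,B}$ (those with $C=B$, $D=A$) into the three sign conditions $\sigma_1,\sigma_2,\sigma_3$ governing the existence of irreducible invariant instantons. So the plan is essentially a bookkeeping argument organized around the homogeneous $SO(3)$-bundles $P_n$.

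First I would recall from Section~\ref{ss:InvariantConnections} that for $(k,l)=(1,1)$ the only $n$ admitting a \emph{nonreducible} invariant connection are $n=0$ and $n=\pm 3$ (Case~0 of the proof of Theorem~\ref{thm:irreducibleinstantonsX11} handles all other $n$: there $A^n$ is forced reducible, so no irreducible instanton can exist). This already reduces the problem to analyzing the three bundles $P_0$, $P_3$, $P_{-3}$. Next I would observe that for the family at hand we have the explicit values $\Gamma=3A^2(C^2-B^2)=0$ and $\Delta=A^2B^2+A^2C^2-2B^2C^2=2B^2(A^2-B^2)$, together with $AD+BC=A^2+B^2\neq0$; these are the data that feed into the formulas of Theorem~\ref{thm:irreducibleinstantonsX11}.

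With these substitutions the three conditions collapse nicely. For $n=0$ the condition $-\Delta\bigl(1+\tfrac{AD}{BC}\bigr)>0$ becomes $-2B^2(A^2-B^2)\bigl(1+\tfrac{A^2}{B^2}\bigr)>0$, i.e.\ $B^2>A^2$; when this holds, plugging in gives $a_1^2=\tfrac{B^4-A^4}{2B^4}$ and $a_4=-\tfrac{A^2+B^2}{\sqrt2 B^2}$, and a routine curvature computation using the formula $F^0=dA^0+\tfrac12[A^0\wedge A^0]$ together with the Maurer--Cartan equations yields the stated curvature components \eqref{eq:CurvatureX_11.1.1}--\eqref{eq:CurvatureX_11.1.3}. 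For $n=3$ (and symmetrically $n=-3$, which is identified with $n=3$ via the automorphism $g\mapsto g^{-1}$ of $SU(3)$ preserving $U(1)_{1,1}$) one computes $\sigma_2=\sigma_3=3B^2(B^2-A^2)$, again positive precisely when $B^2>A^2$, giving $a_2=\pm\tfrac12\sqrt{-1+\tfrac{B^2}{A^2}}$, $a_4=-\tfrac1{2\sqrt2}$, and the curvature \eqref{eq:CurvatureX_11.2.1}--\eqref{eq:CurvatureX_11.2.3}. In every case, when $A\geq B$ the relevant quantity is $\leq0$, so Theorem~\ref{thm:irreducibleinstantonsX11} gives no irreducible instanton; this covers the ``$A\geq B$'' clauses. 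Finally I would note that irreducibility and the fact that these connections are not pulled back from $\overline{\mathbb{CP}}^2$ is visible from the curvature formulas, since the $T_2,T_3$-components are nonzero and involve $\omega_4$ (equivalently, the connection genuinely uses the fiber direction), whereas pullbacks from $\overline{\mathbb{CP}}^2$ have curvature supported on the horizontal self-dual forms only.

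The only mildly laborious part is verifying the curvature expressions \eqref{eq:CurvatureX_11.1.1}--\eqref{eq:CurvatureX_11.2.3}: one must expand $F^n=F^n_c+d_{A^n_c}(A^n-A^n_c)+\tfrac12[A^n-A^n_c,A^n-A^n_c]$ with the structure constants of $\mathfrak{su}(3)$ in the chosen basis $\{e_i\}$ and then substitute the solved values of $a_i,b$. This is entirely mechanical (and the structure-constant computation was already done in the proof of Theorem~\ref{thm:irreducibleinstantonsX11}), so there is no genuine obstacle; the ``hard part'' is really just keeping the numerous square roots and the sign conventions consistent. I would therefore present the proof as: invoke Theorem~\ref{thm:irreducibleinstantonsX11}, substitute $\Gamma=0$, $\Delta=2B^2(A^2-B^2)$, $AD+BC=A^2+B^2$, read off the sign conditions as ``$A<B$'', and record the resulting $a_i$ and curvatures, remarking that the $n\neq 0,\pm3$ bundles carry no irreducible invariant instanton by Case~0.
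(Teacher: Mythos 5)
Your proposal is correct and is essentially the paper's own argument: the paper proves Theorem \ref{thm:X11Irreducible} precisely by substituting $C=B$, $D=A$ (so $\Gamma=0$, $\Delta=2B^2(A^2-B^2)$, $AD+BC=A^2+B^2$) into Theorem \ref{thm:irreducibleinstantonsX11}, reading off the conditions $B^2>A^2$ for $n=0$ and $\sigma_2=\sigma_3=3B^2(B^2-A^2)>0$ for $n=\pm 3$ (the latter identified via $g\mapsto g^{-1}$), and then recording the solved $a_i$ and the resulting curvatures. No substantive difference from the paper's route.
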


\begin{remark}\label{rem:X11Irreducible}
\begin{itemize}
\item Both in $P_{0}$ and $P_{3}$, the $G_2$-instantons $(\nabla_A)_{A,B}$ constructed above become Abelian when $A=B$.

\item None of the irreducible $G_2$-instantons on $P_{0}$ and $P_{3}$ constructed for $A < B$ is pulled back from $\overline{\mathbb{CP}^2}$ and so do not follow from proposition \ref{prop:G2InstFromQK}.
\end{itemize}
\end{remark}

The instantons on $P_{0}$ and $P_{3}$ constructed above are quite different. In fact, looking at the expressions for the curvature of these, we see that by metrically collapsing the fibers of $\pi : X_{1,1} \rightarrow \overline{\mathbb{CP}^2}$ by sending $A$ to $0$, the instantons constructed on $P_{0}$ converge to the pullback of a connection on $\overline{\mathbb{CP}^2}$. However, this property does not hold for those constructed on $P_{3}$. More precisely, we have

\begin{theorem}\label{thm:Limit}
Let $(\nabla_A)_{A,B}$ be the $G_2$-instanton associated with $\varphi_{A,B}$ on $P_{0}$. Then, there is a $SO(3)$-connection $\nabla$ on $\overline{\mathbb{CP}^2}$ such that as $A \rightarrow 0$, $(\nabla_A)_{A,B}$ converges uniformly with all its derivatives to $\pi^* \nabla$.\\
Let $(\tilde{\nabla_A})_{A,B}$ be the $G_2$-instanton associated with $\varphi_{A,B}$ on $P_{3}$. There is no connection $\nabla$ on $\overline{\mathbb{CP}^2}$ such that $(\nabla_A)_{A,B} \rightarrow \pi^* \nabla$ uniformly with respect to $\varphi_{1,1}$, as $A \rightarrow 0$.
\end{theorem}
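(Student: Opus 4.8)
The plan is to extract from the explicit curvature formulas in Theorems \ref{thm:X11} and \ref{thm:X11Irreducible} the precise behaviour of the connections as $A \to 0$, and to compare with the structure of forms pulled back from $\overline{\mathbb{CP}}^2$. First I would set up the comparison framework. Recall that $\pi_1 : X_{1,1} \to \overline{\mathbb{CP}}^2$ has fibres generated (at the Lie algebra level) by $\langle X_1, X_5, X_4 \rangle$, so the $\pi_1$-horizontal space is spanned by $\langle X_2, X_3, X_6, X_7 \rangle$; consequently a $2$-form on $X_{1,1}$ is pulled back from $\overline{\mathbb{CP}}^2$ if and only if it is $SU(3)$-invariant (automatic here), annihilated by the vertical vectors $X_1, X_4, X_5$, and horizontal — equivalently it is a linear combination of $\omega_{23}, \omega_{26}, \omega_{27}, \omega_{36}, \omega_{37}, \omega_{67}$ with no $\omega_4$, $\omega_1$ or $\omega_5$ appearing. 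The metric $\varphi_{1,1}$ fixes the normalisation in which ``uniform convergence with all derivatives'' is to be measured; since all the objects are homogeneous, uniform convergence is just convergence of the (finitely many real) coefficients in the left-invariant coframe, together with the boundedness of the fibre directions under $\varphi_{1,1}$.

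For the first assertion, I would take the limit $A \to 0$ in the curvature formulas \ref{eq:CurvatureX_11.1.1}--\ref{eq:CurvatureX_11.1.3} for $(\nabla_A)_{A,B}$ on $P_0$. Writing $a_1^2 = (B^4 - A^4)/(2B^4) \to 1/2$ and $a_4 = -(A^2+B^2)/(\sqrt{2}B^2) \to -1/\sqrt{2}$, one sees $F_1 \to \tfrac12(\omega_{26} - \omega_{73})$, $F_2 \to \mp \tfrac{1}{\sqrt2}\cdot\tfrac12(\omega_{23}-\omega_{67})$ (up to the overall sign and the $A^2/B^2 \to 0$ terms dropping out), and similarly $F_3 \to \mp\tfrac{1}{\sqrt2}\cdot\tfrac12(\omega_{36}-\omega_{27})$ — all of which are horizontal, with no $\omega_1, \omega_4, \omega_5$ component. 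So the limiting curvature is a genuine $2$-form on $\overline{\mathbb{CP}}^2$ valued in $\mathfrak{so}(3)$; I would then check that the connection $1$-forms $A^0 = a_4 \omega_4 \otimes T_1 + a_1(\omega_1 \otimes T_2 + \omega_5 \otimes T_3)$ themselves converge in a controlled way — the cleaner route is to observe that the \emph{curvatures} converge uniformly with all derivatives (these are homogeneous, so ``all derivatives'' is covered by invariance plus coefficient convergence), that the limiting curvature is basic, and then invoke a standard fact: a sequence of invariant connections on a homogeneous bundle whose curvatures converge to a basic form, after suitable invariant gauge transformations, converge to the pullback of a connection $\nabla$ on the base. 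I would define $\nabla$ directly as the invariant connection on the associated $SO(3)$-bundle over $\overline{\mathbb{CP}}^2$ whose curvature is this limit, and verify convergence of $\pi_1^*\nabla$ to the limit by matching coefficients.

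For the second assertion — the obstruction — I would argue by contradiction using the curvature formulas \ref{eq:CurvatureX_11.2.1}--\ref{eq:CurvatureX_11.2.3} for $\tilde\nabla_A$ on $P_3$. Here $a_4 = -\tfrac{1}{2\sqrt2}$ is \emph{constant}, so $F_1$ contains the term $-\tfrac12 \omega_{15}$ which does not decay as $A \to 0$, and $F_2, F_3$ contain the terms $\pm \tfrac{1}{\sqrt2}\sqrt{-1 + B^2/A^2}\,\omega_{46}$ and $\mp \tfrac{1}{\sqrt2}\sqrt{-1+B^2/A^2}\,\omega_{24}$ which actually \emph{blow up} (as $A \to 0$, $\sqrt{B^2/A^2 - 1} \to \infty$). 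If $\tilde\nabla_A \to \pi_1^*\nabla$ uniformly with respect to the fixed metric $\varphi_{1,1}$, then the curvatures would converge in $C^0(\varphi_{1,1})$ to $\pi_1^* F_\nabla$, which is a bounded basic form; but the $\omega_{15}$ term is non-basic (it involves two vertical directions $\omega_1, \omega_5$ and cannot be cancelled, as $\omega_{15}$ is $\varphi_{1,1}$-orthogonal to all the horizontal $\omega_{ij}$), and the $\omega_{46}$, $\omega_{24}$ terms are both non-basic (they involve the vertical direction $\omega_4$) and unbounded. Either observation alone gives the contradiction; I would present the unboundedness argument as the cleanest, noting $|\omega_{46}|_{\varphi_{1,1}}$ is a fixed positive constant so the pointwise norm of $F_A$ diverges. \textbf{The main obstacle} I anticipate is the first part: making precise the passage from ``curvatures converge'' to ``connections converge to a pullback, after gauge'', since a priori the connection $1$-forms $A^0$ have an $\omega_1 \otimes T_2 + \omega_5 \otimes T_3$ piece that is \emph{not} basic (the vertical direction $\omega_1, \omega_5$ appear), and one must check this non-basic piece is killed by an invariant gauge transformation in the limit — equivalently, that it represents the pullback connection's ``vertical'' component being absorbed into the bundle's own connection on the fibre. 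Concretely: as $A \to 0$ the fibre $S^3$ (or lens space) of $\pi_1$ metrically collapses, and the correct statement is convergence in the Gromov--Hausdorff / Cheeger--Fukaya--Gromov sense; I would phrase the theorem's ``uniform convergence with all derivatives'' as convergence of $\pi_1^*\nabla - \tilde\nabla_A$ (a difference of connections, hence a genuine $\mathfrak{so}(3)$-valued $1$-form on $X_{1,1}$) to zero in $C^\infty$ with respect to a \emph{fixed} reference metric, and check this from the explicit coframe expressions, which is where a short but careful computation is unavoidable.
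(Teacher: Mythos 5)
Your treatment of the second assertion is essentially the paper's argument: you contract the curvature with a vertical direction and note that the coefficient $\sqrt{B^2/A^2-1}$ blows up (the paper computes $\iota_{e_1}\tilde{F}_{A,B}$ and concludes the same way), and your extra observation that the non-decaying $\omega_{15}$ term already contradicts basicness of the limit is also valid. The problem is the first assertion, which you have not actually proved. You reduce it to a ``standard fact'' that invariant connections whose curvatures converge to a basic form converge, \emph{after gauge transformations}, to a pullback; no such general fact is quoted or proved, and in any case the theorem asserts convergence of the connections themselves, with no gauge freedom, so even granting it you would not recover the statement. Moreover, the obstacle you flag --- that the $a_1(\omega_1\otimes T_2+\omega_5\otimes T_3)$ piece of $A^0$ is ``not basic'' and must be killed by a gauge transformation or absorbed via a collapsing (Cheeger--Fukaya--Gromov) argument --- rests on a misconception: a connection pulled back from $\overline{\mathbb{CP}}^2$ is not a horizontal $1$-form in this homogeneous description. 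The vertical directions $\omega_1,\omega_4,\omega_5$ of $\pi_1$ correspond to $\mathfrak{u}(2)/\mathfrak{u}(1)_{1,1}$, and since the relevant isotropy homomorphism $\lambda\colon U(2)\cong (SU(2)\times U(1))/\mathbb{Z}_2\to SO(3)$, $\lambda(g,e^{i\theta})=g \bmod -\mathbf{1}$, is nontrivial, the canonical invariant connection on $P=SU(3)\times_{(U(2),\lambda)}SO(3)$ pulls back to the $1$-form $-\tfrac{1}{\sqrt2}\left(\omega_4\otimes T_1+\omega_1\otimes T_2+\omega_5\otimes T_3\right)$ on $SU(3)$, which contains exactly the terms you were trying to remove.

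This is precisely the missing idea in the paper's proof: one must exhibit $\nabla$ concretely (as that canonical connection, after checking that $P_0$ is the pullback bundle, since $\lambda$ restricts trivially to $U(1)_{1,1}$), and then the whole first assertion is an elementary estimate on the difference of connection $1$-forms,
\begin{equation*}
(\nabla_A)_{A,B}-\pi^*\nabla \;=\; \frac{A^2}{\sqrt2\,B^2}\,\omega_4\otimes T_1+\frac{1}{\sqrt2}\left(1-\sqrt{1-\tfrac{A^4}{B^4}}\right)\left(\omega_1\otimes T_2+\omega_5\otimes T_3\right)
\end{equation*}
(up to signs fixed by the choice of branch), whose $C^k$ norms with respect to the fixed metric $g_{\varphi_{1,1}}$ are bounded by $c_k A^2/B^2$ and hence go to zero with all derivatives. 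No gauge transformation, no curvature-to-connection reconstruction, and no Gromov--Hausdorff reinterpretation is needed; matching curvature limits alone, as in your plan, cannot determine $\nabla$ or give convergence of the connections, so as written your argument for the first half does not close.
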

\begin{proof}
Let $P= SU(3) \times_{U(2), \lambda} SO(3)$ be the bundle constructed from $\lambda: SU(2) \times U(1) / \mathbb{Z}_2 \rightarrow SO(3)$ with
$$\lambda(g, e^{i\theta})=g \mod -1.$$
The canonical invariant connection $\nabla$ associated with this bundle is $-\frac{1}{\sqrt{2}}(\omega_4 \otimes T_1 + \omega_1 \otimes T_2 + \omega_5 \otimes T_3) \in \Omega^1(SU(3), \mathfrak{so}(3))$. Its curvature is $F=T_1 \otimes T_1 + F_2 \otimes T_2 + F_3 \otimes T_3$ is such that
\begin{eqnarray}\nonumber
F_1 =  \frac{1}{2} (\omega_{26} + \omega_{37})  , \ \ F_2 = \mp \frac{1}{2} ( \omega_{23} - \omega_{67}) , \  \ F_3 = \pm  \frac{1}{2} (\omega_{27} + \omega_{63}) , 
\end{eqnarray}
and so is a self-dual connection. In fact notice that the components $F_1, F_2, F_3$ of the curvature pullback to $\Omega_1, \Omega_2, \Omega_3$ on $X_{1,1}$ respectively. We now let $(\nabla_A)_{A,B}$ be our $G_2$-instanton on $\varphi_{A,B}$ which has connection $1$-form
$$-\frac{1}{\sqrt{2}} \left( \frac{A^2}{B^2}+1 \right) \omega_4 \otimes T_1 - \frac{1}{\sqrt{2}} \sqrt{ 1- \frac{A^4}{B^4} } (\omega_1 \otimes T_2 + \omega_5 \otimes T_3) ,$$
seen as an element of $\Omega^1(SU(3), \mathfrak{so}(3))$. Hence the difference of the two connections $a_{A,B}=(\nabla_A)_{A,B} - \pi^* \nabla$ is a $dh$-horizontal $1$-form in $SU(3)$ given by
$$a= \frac{A^2}{B^2} \omega_4 \otimes T_1 + \frac{1}{\sqrt{2}} \left( \sqrt{ 1- \frac{A^4}{B^4} } -1  \right)(\omega_1 \otimes T_2 + \omega_5 \otimes T_3) .$$
Using the fixed metric associated with the $G_2$-structure $\varphi_{1,1}$ to take norms we compute that for any $k \in \mathbb{Z}^+$
$$\Vert a_{A,B} \Vert_{C^k} \leq c_k \frac{A^2}{B^2},$$
for some positive constant $c_k$ independent of $A$, $B$. Taking $A$ to $0$ we see that $a_{A,B}$ converges uniformly to $0$ with all derivatives, proving the first assertion in the statement.\\
We turn now to the proof of the second assertion. Namely, that the same phenomena cannot happen for the instantons we constructed on $P_{\lambda_3}$. If such a statement was true then the curvatures $\tilde{F}_{A,B}$ of $(\tilde{\nabla_A})_{A,B}$ should converge to an  $\mathfrak{so}(3)$-valued $2$-form on $SU(3)$ that is basic with respect to the projection $SU(3) \rightarrow \overline{\mathbb{CP}^2}$. Any linear combination $V$ of the vector fields $e_1,e_4,e_5$ is vertical with respect to this projection. Taking $V= e_1$ we have
$$\iota_{e_1} \tilde{F}_{A,B} = -\frac{1}{2} \omega_{5} \otimes T_1 \pm \frac{1}{2 \sqrt{2}} \sqrt{-1 + \frac{B^2}{A^2}} (\omega_3 \otimes T_2 - \omega_7 \otimes T_3) $$
and clearly $\lim_{A \rightarrow 0} \Vert \iota_{e_1} \tilde{F}_{A,B} \Vert_{C^k} = + \infty$ for all $k \in \mathbb{N}_0$. Hence, $\tilde{F}_{A,B}$ cannot converge to a basic form.
\end{proof}

\begin{remark}
The $SO(3)$-connection $\nabla$ on $\overline{\mathbb{CP}^2}$ appearing in the previous theorem is in fact self-dual. However, we do not want to emphasize this fact too much, as it may be misleading. Indeed, we expect that in other similar situations the same phenomena can occur with the corresponding $\nabla$ not being self-dual. 
\end{remark}

There is one other homogeneous nearly parallel $G_2$-structure on $X_{1,1}$. In fact, the equations for homogeneous nearly parallel $G_2$-structures in the case $(k,l)=(1,1)$ yield $8$ solutions, which give rise to two different metrics. The solutions are completely determined by $C^2=B^2$, $D^2=A^2$ and
\begin{itemize}
\item $A^2= 2 B^2 $ and $ABCD>0$, in which case the corresponding metric is tri-Saskian.

\item $A^2=2B^2/5$ and $ABCD<0$, and so the $G_2$-structure is obtained through the squashing construction in section \ref{sec:NearlyParallel}. In this case the corresponding metric is a proper nearly parallel $G_2$-metric, see theorem 5.5 in \cite{Friedrich1997}.
\end{itemize}
Notice that theorem \ref{thm:X11Irreducible} does not yield any irreducible $G_2$-instanton for the nearly parallel $G_2$-structure contained in the family we are analysing, which is the one inducing the tri-Saskian structure. However, as we shall now show, the strictly nearly parallel does. 

\begin{corollary}\label{cor:NoInstantons}
\begin{itemize}
\item There is no irreducible, invariant $G_2$-instantons with gauge group $SO(3)$ for the nearly parallel $G_2$-structure on $X_{1,1}$ inducing the tri-Saskian metric. 
\item There are irreducible, invariant $G_2$-instantons with gauge group $SO(3)$ for the strictly nearly parallel $G_2$-structure on $X_{1,1}$.
\end{itemize}
\end{corollary}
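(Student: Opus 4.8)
The plan is to read the corollary off the classification of invariant irreducible $SO(3)$ $G_2$-instantons on $X_{1,1}$ (Theorems \ref{thm:irreducibleinstantonsX11} and \ref{thm:X11Irreducible}) together with the explicit parametrisations of the two homogeneous nearly parallel $G_2$-structures in $\mathcal{C}$. Recall these are both of the form \ref{eq:G2str} with $C^2 = B^2$ and $D^2 = A^2$: the tri-Sasakian one has in addition $A^2 = 2B^2$ and $ABCD>0$, while the strictly nearly parallel one has $A^2 = \tfrac{2}{5}B^2$ and $ABCD<0$. Using the $\mathbb{Z}_2\times\mathbb{Z}_2$-symmetry of \ref{eq:G2str} one may normalise $A,B>0$ throughout.

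For the tri-Sasakian structure, the sign condition $ABCD>0$ permits the representative $C=B$, $D=A$, so this structure is precisely one of the $\varphi_{A,B}$ of Theorem \ref{thm:X11} with $A^2=2B^2$, i.e.\ $A>B$. Theorem \ref{thm:X11Irreducible} then states that for $A\ge B$ there is no invariant irreducible $G_2$-instanton on $P_0$ nor on $P_{\pm3}$, and since Case~0 of Theorem \ref{thm:irreducibleinstantonsX11} shows $P_0,P_{\pm3}$ are the only $SO(3)$-bundles carrying \emph{any} invariant irreducible connection, the first bullet is immediate.

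For the strictly nearly parallel structure the sign $ABCD<0$ means it is not among the $\varphi_{A,B}$, so I would apply the general Theorem \ref{thm:irreducibleinstantonsX11} directly, for which only the signs of $\Gamma$, $\Delta$ and $AD/(BC)$ matter. From $C^2=B^2$ one gets $\Gamma = 3A^2(C^2-B^2)=0$ and $\Delta = 2B^2(A^2-B^2) = -\tfrac{6}{5}B^4 < 0$ when $A^2=\tfrac25 B^2$; and $ABCD<0$ with $A,B>0$ forces $AD$ and $BC$ to have opposite signs, with $|AD/(BC)| = A^2/B^2 = \tfrac25$, hence $AD/(BC) = -\tfrac25$. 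The existence criterion in item~(1) of Theorem \ref{thm:irreducibleinstantonsX11}, namely $-\Delta\bigl(1+\tfrac{AD}{BC}\bigr)>0$, then becomes $\tfrac65 B^4\cdot\tfrac35>0$, which holds; concretely $a_1^2 = \tfrac{-\Delta}{4B^2C^2}\bigl(1+\tfrac{AD}{BC}\bigr) = \tfrac{9}{50} > 0$ and $a_4 = -\tfrac{1}{\sqrt2}\cdot\tfrac35 \ne 0$, giving a genuine irreducible invariant $G_2$-instanton on the trivial bundle $P_0$. For completeness one also checks $BD/(AC)=CD/(AB)=-1$ here, so $\sigma_2 = \sigma_3 = \tfrac92\Delta < 0$ and $P_{\pm3}$ carry none; the only $SO(3)$-bundle with an irreducible invariant $G_2$-instanton for this structure is $P_0$.

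There is no real obstacle: everything reduces to the already-proven classification theorems. The only care needed is bookkeeping — placing each nearly parallel structure correctly with respect to the $\mathbb{Z}_2\times\mathbb{Z}_2$-symmetry and the sub-family $\varphi_{A,B}$, so one knows whether Theorem \ref{thm:X11Irreducible} applies or one must fall back on Theorem \ref{thm:irreducibleinstantonsX11}, and then tracking the sign of $AD/(BC)$ (and of $BD/(AC)$, $CD/(AB)$) in each case. The one genuine computation, that the stated parameter values solve the nearly parallel system \ref{eq:nearlyparallelG2eqns}, is already recorded in the discussion preceding the corollary.
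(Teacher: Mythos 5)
Your proposal is correct and follows essentially the same route as the paper: both reduce the corollary to the classification of invariant irreducible $SO(3)$-instantons on $X_{1,1}$, using $C^2=B^2$, $D^2=A^2$ and the sign of $ABCD$ to evaluate the existence criteria ($\Gamma=0$, $\Delta=2B^2(A^2-B^2)$, $AD/(BC)=\pm A^2/B^2$, $BD/(AC)=CD/(AB)=\mathrm{sign}(ABCD)$), and your numerical checks ($a_1^2=9/50$, $\sigma_2=\sigma_3=\tfrac92\Delta<0$) agree with the paper's computations. The only cosmetic difference is that for the tri-Sasakian case you route through the specialized Theorem \ref{thm:X11Irreducible} (noting $A^2=2B^2\geq B^2$) while the paper recomputes the $\sigma_i$ directly from Theorem \ref{thm:irreducibleinstantonsX11}; these are the same argument.
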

\begin{proof}
Any homogeneous nearly parallel $G_2$-structure on $X_{1,1}$ satisfies $A^2=D^2$ and $B^2=C^2$. Moreover, they must be such that and either
\begin{itemize}
\item $A^2= 2 B^2 $ and $ABCD>0$. In fact, for $ABCD>0$ we compute 
$$ \sigma_1(\varphi)= 6 (B^4-A^4), \ \sigma_2 (\varphi) = \sigma_3(\varphi)= 3 B^2(B^2 -A^2) . $$
As the nearly parallel $G_2$-structure in this case has $A^2= 2 B^2 > B^2$ we see that all $\sigma_i$, for $i=1,2,3$, are negative and so there are no $G_2$-instantons.
\item $A^2=2B^2/5$ and $ABCD<0$. In this case we compute that for $ABCD<0$
$$ \sigma_1(\varphi)= 6 (A^2-B^2)^2, \ \sigma_2 (\varphi) = \sigma_3(\varphi)= 9 B^2(A^2 -B^2) . $$
The nearly parallel $G_2$-structure has $A^2=2B^2/5<B^2$, so both $\sigma_2$ and $\sigma_3$ are negative. On the other hand $\sigma_1$ is positive and so there, irreducible $G_2$-instanton on this nearly parallel $G_2$-structure do exist. Any such must live in the trivial bundle $P_{\lambda_0}$.
\end{itemize}
\end{proof}

\begin{remark}
\begin{itemize}
\item The previous result shows the $G_2$-structures inducing the tri-Sasakian and the strictly nearly parallel $G_2$-structures on $X_{1,1}$ can be distinuguished by the existence of an irreducible, invariant $G_2$-instantons with gauge group $SO(3)$.

\item We further remark that we are not analyzing the most general homogeneous and coclosed $G_2$-structures on $X_{1,1}$. In fact, for $(k,l)=(1,1)$ there is a larger dimensional family, containing in particular a nearly parallel $G_2$-structure whose associated metric is Sasaki-Einstein, see \cite{Boyer1994} and \cite{Cabrera1996}.

\item In the Sasaki case, a Hermitian connection is transverse Hermitian-Yang-Mills (tHYM) if its curvature is basic with respect to the Reeb-flow and satisfies
$$F^{0,2}=0 , \ \ \Lambda_T F=0,$$
where $F^{0,2}$ is the $(0,2)$-component of the curvature with respect to the transverse complex structure and $\Lambda_T$ is the dual of wedging with the transverse K\"ahler form. Equivalently, a tHYM connection is characterized by having basic curvature and its pullback to the metric cone being HYM with respect to the K\"ahler structure induced on the cone. See \cite{Calvo2016} for relations of $G_2$ gauge theory when a Sasakian structure is present.
\end{itemize}
\end{remark}

%
%
%

\section{Questions for further work}

The following are natural questions for further work
\begin{enumerate}
\item Similar methods can be used in many other cases where homogeneous $G_2$-structures exist. Of particular interest would be the cases admitting nearly parallel $G_2$-structures, see \cite{Friedrich1997} for the classification of homogeneous nearly-parallel $G_2$-manifolds.
\item Carry on a general analysis for which $(k,l)$ do theorems \ref{thm:AbelianInstantons} and \ref{thm:IrreducibleInstantons} provide irreducible $G_2$-instantons for the nearly parallel $G_2$-structures in $X_{k,l}$. We intend to address this question in the future.
\item Compute the Crowley-Nordstr\"om invariants, \cite{Crowley2015}, for the $G_2$-structures $\varphi \in \mathcal{C}$ and check if this distinguishes the two disconnected components in $\mathcal{C}$. If that is the case, then for $k \neq l$, $l \neq m$, $m \neq k$ these invariants can be used to distinguish the two strictly nearly parallel $G_2$-structures.
\item Given a $G_2$-instanton $A$ for a $G_2$-structure on $X_{k,l}$ such that $A$ is also Yang-Mills, in which cases is $A$ stable as a Yang-Mills connection? Here, it would be interesting to understand better how the answer to this question depends on the $G_2$-structure.
\end{enumerate}

\begin{bibdiv}
\begin{biblist}

\bib{Aloff1975}{article}{
   author={Aloff, Simon},
   author={Wallach, Nolan R.},
   title={An infinite family of distinct $7$-manifolds admitting positively
   curved Riemannian structures},
   journal={Bull. Amer. Math. Soc.},
   volume={81},
   date={1975},
   pages={93--97},
   issn={0002-9904},
   review={\MR{0370624}},
}
   
\bib{Anderson2011}{article}{
  author={L.~B.~Anderson}
  author ={ J.~Gray}
  author={ A.~Lukas}
  author={B.~Ovrut}
  title={Stabilizing the Complex Structure in Heterotic Calabi-Yau Vacua},
  journal={JHEP},
  volume= { \bf{1102}, 088} 
  year={2011}
}

\bib{Anderson2010}{article}{
  author={L.~B.~Anderson}
  author ={ J.~Gray}
  author={ A.~Lukas}
  author={B.~Ovrut}
  title={Yukawa Textures From Heterotic Stability Walls},
  journal={JHEP},
  volume= { {\bf 1005}, 086} 
  year={2010}
}

\bib{Simons1979}{article}{
   author={Bourguignon, Jean-Pierre},
   author={Lawson, H. Blaine},
   author={Simons, James},
   title={Stability and gap phenomena for Yang-Mills fields},
   journal={Proc. Nat. Acad. Sci. U.S.A.},
   volume={76},
   date={1979},
   number={4},
   pages={1550--1553},
   issn={0027-8424},
   review={\MR{526178}},
}

\bib{Baum1990}{book}{
   author={Baum, Helga},
   author={Friedrich, Thomas},
   author={Grunewald, Ralf},
   author={Kath, Ines},
   title={Twistor and Killing spinors on Riemannian manifolds},
   series={Seminarberichte [Seminar Reports]},
   volume={108},
   publisher={Humboldt Universit\"at, Sektion Mathematik, Berlin},
   date={1990},
   pages={179},
   review={\MR{1084369}},
}

\bib{Boyer1994}{article}{
    AUTHOR = {Charles Boyer and Krzysztof Galicki},
     TITLE = {The geometry and topology of 3-{S}asakian manifolds,},
 journal = {Journal f\"ur die reine und angewandte Mathematik},
  volume = {455},
     PAGES = {183--220},
 PUBLISHER = {Int. Press, Boston, MA},
      year = {1994},
       URL = {http://eudml.org/doc/153658},
}

\bib{Boyer1999}{article}{
    AUTHOR = {Charles Boyer and Krzysztof Galicki},
     TITLE = {3-{S}asakian manifolds},
 BOOKTITLE = {Surveys in differential geometry: essays on {E}instein
              manifolds},
    SERIES = {Surv. Differ. Geom., VI},
     PAGES = {123--184},
 PUBLISHER = {Int. Press, Boston, MA},
      YEAR = {1999},
   MRCLASS = {53C26 (53C25)},
  MRNUMBER = {1798609},
MRREVIEWER = {Roger Bielawski},
       DOI = {10.4310/SDG.2001.v6.n1.a6},
       URL = {http://dx.doi.org/10.4310/SDG.2001.v6.n1.a6},
}


\bib{Bryant2006}{article}{
	Author = {Bryant, R.~L.},
	Booktitle = {Proceedings of {G}\"okova {G}eometry-{T}opology {C}onference 2005},
	Mrclass = {53C10 (53C29)},
	Mrnumber = {2282011 (2007k:53019)},
	Mrreviewer = {Simon G. Chiossi},
	Pages = {75--109},
	Publisher = {G\"okova Geometry/Topology Conference (GGT), G\"okova},
	Title = {{Some remarks on $\rG_2$--structures}},
	Year = {2006}}
	
\bib{Cabrera1996}{article}{
   author={Cabrera, F. M.},
   author={Monar, M. D.},
   author={Swann, A. F.},
   title={Classification of $G_2$-structures},
   journal={J. London Math. Soc. (2)},
   volume={53},
   date={1996},
   number={2},
   pages={407--416},
   issn={0024-6107},
   review={\MR{1373070}},
   doi={10.1112/jlms/53.2.407},
}

\bib{Calvo2016}{article}{
   author = {O. {Calvo-Andrade} and L.~O. {Rodr{\'{\i}}guez D{\'{\i}}az} and H.~N.~S. {Earp}},
    title = {Gauge theory and $G_2$-geometry on Calabi-Yau links},
  journal = {ArXiv e-prints},
   eprint = {1606.09271},
 keywords = {Mathematics - Differential Geometry, Mathematical Physics},
     year = {2016},
    month = {jun},
}

\bib{Charbonneau2016}{article}{
author={ Benoit Charbonneau and Derek Harland},
title={Deformations of Nearly K{\"a}hler Instantons},
journal={Communications in Mathematical Physics},
year={2016},
volume={348},
number={3},
pages={959--990},
doi={10.1007/s00220-016-2675-y},
url={http://dx.doi.org/10.1007/s00220-016-2675-y}
}

%
%

\bib{Clarke14}{article}{
    Author = {Clarke, A.},
     Title = {Instantons on the exceptional holonomy manifolds of {B}ryant and {S}alamon},
   Journal = {J. Geom. Phys.},
  FJOURNAL = {Journal of Geometry and Physics},
    Volume = {82},
      Year = {2014},
     Pages = {84--97},
      ISSN = {0393-0440},
   MRCLASS = {Preliminary Data},
  MRNUMBER = {3206642},
       DOI = {10.1016/j.geomphys.2014.04.006}
}

\bib{Corrigan1983}{article}{
	Author = {E. Corrigan and C. Devchand and  D.~B. Fairlie and J. Nuyts},
	Coden = {NUPBBO},
	Doi = {10.1016/0550-3213(83)90244-4},
	Fjournal = {Nuclear Physics. B},
	Issn = {0550-3213},
	Journal = {Nuclear Phys. B},
	Mrclass = {81E10 (53C80)},
	Mrnumber = {698892 (84i:81058)},
	Mrreviewer = {Alfred Actor},
	Number = {3},
	Pages = {452--464},
	Title = {First-order equations for gauge fields in spaces of dimension greater than four},
	Url = {http://dx.doi.org/10.1016/0550-3213(83)90244-4},
	Volume = {214},
	Year = {1983},
	Bdsk-Url-1 = {http://dx.doi.org/10.1016/0550-3213(83)90244-4}}

\bib{Crowley2015}{article}{
	Author = {Diarmuid Crowley and Johannes Nordstr\"om},
	Title = {New invariants of $G_2$-structures},
	Journal = {Geometry \& Topology},
	Year = {2015},
	Pages = {2949--2992},
	Volume = {19},
	DOI = {10.2140/gt.2015.19.2949}}

\bib{Donaldson1998}{article}{
	Author = {S.~K. Donaldson and R.~P. Thomas},
	Booktitle = {The geometric universe ({O}xford, 1996)},
	Mrclass = {57R57 (14J32 32J18 53C07 57R58 58D27)},
	Mrnumber = {MR1634503 (2000a:57085)},
	Mrreviewer = {Krzysztof Galicki},
	Owner = {thomas},
	Pages = {31--47},
	Publisher = {Oxford Univ. Press},
	Timestamp = {2009.11.23},
	Title = {Gauge theory in higher dimensions},
	Year = {1998}}
		
\bib{Donaldson2009}{incollection}{
	Author = {S.~K. Donaldson{,} E.~P.Segal},
	Booktitle = {Surveys in differential geometry. Volume XVI. Geometry of special holonomy and related topics},
	Pages = {1--41},
	Publisher = {Int. Press, Somerville, MA},
	Series = {Surv. Differ. Geom.},
	Title = {Gauge theory in higher dimensions, {II}},
	Volume = {16},
	Year = {2011}
	}
	
\bib{Fernandez1982}{article}{
	Author = {M. Fern{\'a}ndez{,} A. Gray},
	Coden = {ANLMAE},
	Doi = {10.1007/BF01760975},
	Fjournal = {Annali di Matematica Pura ed Applicata. Serie Quarta},
	Issn = {0003-4622},
	Journal = {Ann. Mat. Pura Appl. (4)},
	Mrclass = {53C25},
	Mrnumber = {696037 (84e:53056)},
	Mrreviewer = {Hidekiyo Wakakuwa},
	Pages = {19--45 (1983)},
	Title = {Riemannian manifolds with structure group {$\rG_2$}},
	Volume = {132},
	Year = {1982},
	}

\bib{Foscolo2016}{article}{
   author = {{Foscolo}, L.},
    title = {Deformation theory of nearly K\"ahler manifolds},
  journal = {ArXiv e-prints},
   eprint = {1601.04400},
 keywords = {Mathematics - Differential Geometry, 53C10, 53C15, 58H15},
     year = {2016},
    month = {jan},
     adsurl = {http://adsabs.harvard.edu/abs/2016arXiv160104400F},
  adsnote = {Provided by the SAO/NASA Astrophysics Data System},
}

\bib{Friedrich1997}{article}{
   author={Friedrich, Th.},
   author={Kath, I.},
   author={Moroianu, A.},
   author={Semmelmann, U.},
   title={On nearly parallel $G_2$-structures},
   journal={J. Geom. Phys.},
   volume={23},
   date={1997},
   number={3-4},
   pages={259--286},
   issn={0393-0440},
   review={\MR{1484591}},
   doi={10.1016/S0393-0440(97)80004-6},
}

\bib{Harland2011}{article}{
    AUTHOR = {Derek Harland and Christoph N{\"o}lle},
     TITLE = {Instantons and {K}illing spinors},
   JOURNAL = {J. High Energy Phys.},
  FJOURNAL = {Journal of High Energy Physics},
      YEAR = {2012},
    NUMBER = {3},
     PAGES = {082, front matter+37},
      ISSN = {1126-6708},
   MRCLASS = {53Cxx},
  MRNUMBER = {2980180},
}

\bib{Kreck1998}{article}{
   author={Kreck, Matthias},
   author={Stolz, Stephan},
   title={A correction on: ``Some nondiffeomorphic homeomorphic homogeneous
   $7$-manifolds with positive sectional curvature'' [J.\ Differential
   Geom.\ {\bf 33} (1991), no. 2, 465--486; MR1094466 (92d:53043)]},
   journal={J. Differential Geom.},
   volume={49},
   date={1998},
   number={1},
   pages={203--204},
   issn={0022-040X},
   review={\MR{1642125}},
}
	
%
%

\bib{Lotay2016}{article}{
   author = {J.~D. {Lotay}{,} G. {Oliveira}},
    title = {$SU(2)^2$-invariant $G\_2$-instantons},
  journal = {ArXiv e-prints},
archivePrefix = {arXiv},
   eprint = {1608.07789},
 primaryClass = {math.DG},
 keywords = {Mathematics - Differential Geometry, 53C07, 53C25},
     year = {2016},
    month = {aug},
  adsnote = {Provided by the SAO/NASA Astrophysics Data System}
}

\bib{Nagy2002}{article}{
   author={Nagy, Paul-Andi},
   title={Nearly K\"ahler geometry and Riemannian foliations},
   journal={Asian J. Math.},
   volume={6},
   date={2002},
   number={3},
   pages={481--504},
   issn={1093-6106},
   review={\MR{1946344}},
   doi={10.4310/AJM.2002.v6.n3.a5},
}

\bib{Oliveira2014}{article}{
      author={Oliveira, Goncalo},
       title={Monopoles on the Bryant-Salamon manifolds},
        date={2014},
        ISSN={0393-0440},
     journal={Journal of Geometry and Physics},
      volume={86},
      number={0},
       pages={599 \ndash  632},
}

\bib{SaEarp2015}{article}{
  AUTHOR = {T. Walpuski, H. S{\'a} Earp},
     TITLE = {{$\rm {G}\sb 2$}-instantons over twisted connected sums},
   JOURNAL = {Geom. Topol.},
  FJOURNAL = {Geometry \& Topology},
    VOLUME = {19},
      YEAR = {2015},
    NUMBER = {3},
     PAGES = {1263--1285},
      ISSN = {1465-3060},
   MRCLASS = {53C07 (53C25 53C38)},
  MRNUMBER = {3352236},
MRREVIEWER = {Ana Cristina Ferreira},
       DOI = {10.2140/gt.2015.19.1263},
       URL = {http://dx.doi.org/10.2140/gt.2015.19.1263},
}


\bib{Wang1982}{article}{
   author={Wang, McKenzie Y.},
   title={Some examples of homogeneous Einstein manifolds in dimension
   seven},
   journal={Duke Math. J.},
   volume={49},
   date={1982},
   number={1},
   pages={23--28},
   issn={0012-7094},
   review={\MR{650366}},
}

\bib{Walpuski2011}{article}{
  AUTHOR = {Walpuski, T.},
     TITLE = {{$\rm G\sb 2$}-instantons on generalised {K}ummer
              constructions},
   JOURNAL = {Geom. Topol.},
  FJOURNAL = {Geometry \& Topology},
    VOLUME = {17},
      YEAR = {2013},
    NUMBER = {4},
     PAGES = {2345--2388},
      ISSN = {1465-3060},
   MRCLASS = {53B15 (53C38 58D19)},
  MRNUMBER = {3110581},
MRREVIEWER = {Selman U{\u{g}}uz},
       DOI = {10.2140/gt.2013.17.2345},
       URL = {http://dx.doi.org/10.2140/gt.2013.17.2345},
}

\bib{Walpuski2015}{article}{
   author = {{Walpuski}, T.},
    title = {${\rm G}_2$-instantons over twisted connected sums: an example},
  journal = {ArXiv e-prints},
   eprint = {1505.01080},
 keywords = {Mathematics - Differential Geometry, Mathematics - Algebraic Geometry, 53C07, 53C25, 53C38, 14D20, 14J28},
     year = {2015},
    month = {may},
  adsnote = {Provided by the SAO/NASA Astrophysics Data System}
}

\bib{Wang1958}{article}{
   author={Wang, Hsien-chung},
   title={On invariant connections over a principal fibre bundle},
   journal={Nagoya Math. J.},
   volume={13},
   date={1958},
   pages={1--19},
   issn={0027-7630},
   review={\MR{0107276}},
}

\bib{Ziller2004}{article}{
   author={Ziller, Wolfgang},
   title={Examples of Riemannian manifolds with non-negative sectional
   curvature},
   conference={
      title={Surveys in differential geometry. Vol. XI},
   },
   book={
      series={Surv. Differ. Geom.},
      volume={11},
      publisher={Int. Press, Somerville, MA},
   },
   date={2007},
   pages={63--102},
   review={\MR{2408264}},
   doi={10.4310/SDG.2006.v11.n1.a4},
}

\end{biblist}
\end{bibdiv}

%
%

\end{document}